\newtheorem{thm}{Theorem}[section]
\newtheorem{prop}[thm]{Proposition}
\newtheorem{lem}[thm]{Lemma}
\newtheorem{cor}[thm]{Corollary}
\theoremstyle{definition}
\newtheorem{definition}[thm]{Definition}
\theoremstyle{remark}
\newtheorem{remark}[thm]{Remark}
\numberwithin{equation}{section}
\newcommand{\R}{\mathbb{R}}  
\DeclareMathOperator{\dist}{dist} 
\DeclareMathOperator{\diam}{diam} 
\DeclareMathOperator{\supp}{supp} 
\newcommand{\N}{\mathbb N}
\newcommand{\Z}{\mathbb Z}
\DeclareMathOperator{\fr}{fr} 
\begin{document}

\title{Local Tb theorems and Hardy inequalities}

\author{P. Auscher}

\address{Pascal Auscher, Université Paris-Sud, laboratoire de Math\'ematiques, UMR 8628, Orsay F-91405; CNRS, Orsay, F-91405; Centre for Mathematics and its Applications, Australian National University, Canberra ACT 0200, Australia}
\email{pascal.auscher@math.u-psud.fr}

\author{E. Routin}
\address{Eddy Routin, Université Paris-Sud, laboratoire de Math\'ematiques, UMR 8628, Orsay F-91405; CNRS, Orsay, F-91405; Centre for Mathematics and its Applications, Australian National University, Canberra ACT 0200, Australia}
\email{eddy.routin@math.u-psud.fr}

\begin{abstract}
In the setting of spaces of homogeneous type, we give a direct proof of the local $Tb$ theorem for singular integral operators. Motivated by questions of S. Hofmann, we extend it to the case when the integrability conditions are lower than $2$, with an additional weak boundedness type hypothesis, which incorporates some Hardy type inequalities. The latter can be obtained from some geometric conditions on the homogeneous space. For example, we prove that the monotone geodesic property of Tessera suffices.
\end{abstract}

\subjclass[2010]{42B20}

\keywords{Local Tb theorem, singular integral operators, space of homogeneous type, Hardy type inequalities}

\maketitle

\tableofcontents

\section{Introduction}

\medskip

The goal of this paper is to study a question raised by S. Hofmann in \cite{Hofmann2} (question $3.3.1$). The question is whether in existing local $Tb$ theorems, one can weaken the integrability conditions on the accretive systems to any exponent greater than $1$. So far, the known arguments for, say, antisymmetric kernels, \cite{Christ}, \cite{AHMTT}, \cite{Hofmann}, \cite{AY}, \cite{TanYan}, handle exponents greater than or equal to $2$ and no less. Motivations in \cite{HM} are towards obtaining uniform rectifiability of $n-$dimensional Ahlfors-David regular sets which are seen as boundaries of domains satisfying some interior access condition to the boundary, and whose Poisson kernel satisfies some scale invariant $L^p$ estimate for $p$ larger than $1$ and $p$ close to $1$. Here, we work on spaces of homogeneous type with scalar operators. We do not consider non-doubling spaces or Banach space valued theory, where $Tb$ theorems are proved under $L^\infty$ or $BMO$ control on $b$, and we refer to the work of Nazarov, Treil and Volberg \cite{NTV1}, \cite{NTV2}, and T. Hytönen \cite{Hyt1}, \cite{Hyt3}. The interest of local $Tb$ theorems over the "global" $Tb$ theorem of David, Journé and Semmes \cite{DJS} (and M$^c$Intosh, Meyer \cite{Mc-M} where it was introduced for the first time) is that there is no need for producing para-accretive functions which are unavoidable in such a formulation (see the work of Han and Sawyer \cite{HS}). And in application, the local statement is much easier to use. An occurrence of this is in the work of the first author with Alfonseca, Axelsson, Hofmann, Kim \cite{AAAHK}. 

We partially answer Hofmann's question with our Theorem \ref{AR}. That is, we provide an argument that works for all integrability exponents. This argument is based on the Beylkin-Coifman-Rokhlin algorithm \cite{BCR} (see also the work of T. Figiel \cite{Fig}), but in adapted Haar wavelets rather than the normal ones. However, as we shall see, our proof works at the expense of a supplementary weak boundedness property assumption when the integrability exponents are close to $1$. As a matter of fact, we think that the statement proposed in \cite{Hofmann2} is not correct for exponents close to $1$, due to the inapplicability of Hardy inequalities, without a further hypothesis (which appears in the analysis of one term, which is not susceptible to cancel with other controlled terms). It is thus interesting to isolate this hypothesis but it could be hard to check. The understanding of this issue might require some extra efforts.

Historically, local $T(b)$ theorems were introduced and proved on spaces of homogeneous type by M. Christ \cite{Christ} with $L^{\infty}$ bounds on the accretive systems. The first author, Hofmann, Muscalu, Tao and Thiele \cite{AHMTT} found a generalization for all exponents but for model singular integral operators called perfect dyadic. This argument was carried out to standard singular integral operators by S. Hofmann \cite{Hofmann} but a restriction on the exponents appeared, to be able to use Hardy inequalities. Then the first author and Yang \cite{AY} were able to find a different argument improving on exponents, still with a restriction though. This argument was carried out to spaces of homogeneous type by Tan and Yan \cite{TanYan}, we mention however some gaps there, as Hardy inequalities need to be proved or assumed on such spaces.

This gives us the opportunity to study, on a space of homogeneous type, inequalities (which we call of Hardy type) 
\[ \left | \int_I \int_J {\frac{f(y)g(x)}{\mathrm{Vol}(B(x,\dist(x,y)))} d\mu(x) d\mu(y)} \right | \leq C \|f\|_{L^p(I,d\mu)} \|g\|_{L^{p'}(J,d\mu)}  \]
for $I\cap J = \varnothing,$ and $1<p<+\infty$, $1/p + 1/{p'} = 1$. Our terminology comes from the fact that in Euclidean spaces, such inequalities follow from the well known $1-$dimensional Hardy inequality. It turns out that they hold without any restriction if $I,J$ are Christ's dyadic cubes (see Section $2$). This seems not to have been noticed in the literature for dual pairs of exponents and we prove it in Section $9$. It depends in particular on the small layers for dyadic cubes. However, if $I$ is a ball $B$ and $J$, say, $2B \backslash B$, then it is not clear in general. It clearly depends on how $B$ and $2B \backslash B$ see each other through their boundary. We prove that some small boundary hypothesis (called the relative layer decay property) suffices. We also show that this property holds in all doubling complete Riemannian manifolds, length spaces and more generally in any monotone geodesic space of homogeneous type. The latter notion arose in geometric measure theory from the work of R. Tessera \cite{Tess}, and was recently proved by Lin, Nakai and Yang \cite{LNY} to be equivalent to a chain ball notion introduced by S. Buckley \cite{Buck}.

The paper is organized as follows. In Section $2$ we recall some basic definitions and results in spaces of homogeneous type, such as the existence of dyadic cubes and the definition of singular integral operators. In Section $3$ we state our main results, that is Theorem \ref{AR} (main $Tb$ theorem with hypotheses on dyadic cubes), Theorem \ref{ARBalls} ($Tb$ theorem with hypotheses on balls), and Theorem \ref{ARjut} (relaxing support conditions on accretive systems), and comment our hypotheses. We prove Theorem \ref{ARBalls} and Theorem \ref{ARjut} in Section $4$, while the proof of Theorem \ref{AR} is developed over the three subsequent \mbox{sections:} we introduce some notations and give some preliminary results in Section $5$, notably the existence of adapted Haar wavelets in a space of homogeneous type, explain important reductions in Section $6$, and finally detail the algorithm  and estimate all the terms involved in Section $7$. We then devote Section $8$ to the study of two particular cases: the case of perfect dyadic operators and the case when the exponents are not too small, that is $1/p + 1/q \leq 1$, for which the proof is much easier. Finally, we have a closer look at Hardy type inequalities in Section $9$. 

This work is part of a doctorate dissertation of the second author. It was conducted at Université Paris-Sud and the Center for Mathematics and its Applications (CMA) at ANU. The authors are particularly grateful to CMA for their warm hospitality  and financial support during their visit. The authors also thank S. Hofmann, T. Hytönen, J.-M. Martell for discussions related to this work.

\medskip

\section{Notations and preliminaries}

\medskip

Throughout this work, we assume that $(X,\rho,\mu)$ is a space of homogeneous type, that is, $X$ is a set equipped with a metric $\rho$ and a non-negative Borel measure $\mu$ on $X$ for which there exists a constant $C_D <+\infty$ such that all the associated balls $B(x,r)=\{y \in X ; \rho(x,y) <r \}$ satisfy the doubling property
\[ 0 < \mu (B(x,2r)) \leq C_D \mu (B(x,r)) < \infty  \]
for every $x \in X$ and $r>0$. Contrary to Christ's algorithm, ours allows the presence of atoms, that is points $x$ with $\mu(\{x\}) \neq 0$. In fact, all points could be atoms if one wishes. We will use the notation $A\lesssim B$ (resp. $A \approx B$) to denote the estimate $A\leq CB$ (resp. $(1/C) B \leq A \leq C B$) for some absolute constant $C$ which may vary from line to line.

\subsection{Dyadic cubes}The following result, due to M. Christ (see \cite{Christ}), states the existence of sets analogous to the dyadic cubes of $\R^n$ in a space of homogeneous type.
\begin{lem}\label{cubes}
There exist a collection of open subsets $\{ Q_{\alpha}^j \subset X: j \in \mathbb{Z}, \alpha \in I_j \}$, where $I_j$ denotes some (possibly finite) index set depending on $j$, and constants $0 < \delta < 1$, $a_0 > 0$, $\eta > 0$, and $C_1,C_2 < +\infty$ such that
\begin{enumerate}
\item For all $j \in \mathbb{Z}$, $\mu(\{ X \backslash \bigcup_{\alpha \in
I_j}{Q_{\alpha}^j} \} ) = 0$.
\item If $j < j'$ , then either $Q_{\beta}^{j'} \subset
Q_{\alpha}^{j}$, or $Q_{\beta}^{j'} \cap Q_{\alpha}^{j} = \varnothing$.
\item For each $(j,\alpha)$ and each $j'<j$ there is a unique $\beta$ such that $Q_{\alpha}^j \subset Q_{\beta}^{j'}$.
\item For each $(j,\alpha)$, we have  $\mathrm{diam}(Q_{\alpha}^j)
\leq C_1 {\delta}^j$.
\item Each $Q_{\alpha}^j$ contains some ball $B(z_{\alpha}^j,a_0
{\delta}^j)$. We say that $z_{\alpha}^j$ is the center of the cube $Q_{\alpha}^j.$
\item Small boundary condition:
\[ \mu \left( \left\{ x \in Q_{\alpha}^j: \rho (x,X \backslash Q_{\alpha}^j) \leq t {\delta}^j \right\} \right) \leq C_2 t^{\eta} \mu(Q_{\alpha}^j) \quad \forall j,\alpha, \quad \forall t>0. \]
\end{enumerate}
\end{lem}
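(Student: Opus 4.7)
The plan is to follow the classical construction of nested partitions driven by maximal separated sets at each scale, and then establish the small boundary estimate by a geometric series argument exploiting the doubling property.

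First I would fix a parameter $\delta \in (0,1)$ sufficiently small in terms of the doubling constant $C_D$, to be determined at the end. For each $j \in \Z$, using Zorn's lemma or a greedy construction, I would select a maximal $\delta^j$-separated subset $\mathcal{Z}_j = \{z_{\alpha}^j\}_{\alpha \in I_j} \subset X$. Maximality gives that the balls $B(z_{\alpha}^j,\delta^j)$ cover $X$. Then I would define a parent map $\pi\colon \mathcal{Z}_j \to \mathcal{Z}_{j-1}$ by sending each $z_{\alpha}^j$ to the nearest point of $\mathcal{Z}_{j-1}$, breaking ties arbitrarily; by the covering property, $\rho(z_{\alpha}^j,\pi(z_{\alpha}^j)) < \delta^{j-1}$. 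This endows $\bigcup_j \mathcal{Z}_j$ with a rooted tree structure descending into scales, and yields the unique ancestor in part (3).

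Second, I would associate to each $(j,\alpha)$ an open preliminary cube $\widetilde{Q}_{\alpha}^j$ consisting of points $y \in X$ whose finest-scale ancestry passes through $z_{\alpha}^j$; concretely, $y \in \widetilde{Q}_{\alpha}^j$ iff for every large enough $k$, the nearest point of $\mathcal{Z}_{j+k}$ to $y$ is a descendant of $z_{\alpha}^j$. A geometric series computation, valid once $\delta$ is chosen small, shows that if $\rho(y,z_{\alpha}^j) \leq a_0 \delta^j$ with $a_0$ small and independent of $\alpha,j$, then at every finer scale the nearest center stays within $\sum_{k \geq 0} \delta^{j+k}$ of $z_{\alpha}^j$, which contains (5), while the same telescoping gives the diameter bound (4). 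Properties (2) and (3) are immediate from the tree structure, and property (1) follows since the covering balls $B(z_{\alpha}^j,\delta^j)$ are contained in the closures of the cubes; one adjusts on a $\mu$-null set to keep them open.

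The main technical step will be the small boundary condition (6). The $t\delta^j$-neighborhood of $\partial Q_{\alpha}^j$ consists of points whose assignment to $Q_{\alpha}^j$ is ambiguous at some finer scale $j+k$, meaning their nearest center at that level has a sibling belonging to a different branch at distance comparable to $\delta^{j+k}$. Covering such ambiguous points by balls $B(z_{\beta}^{j+k}, C\delta^{j+k})$, applying doubling to compare with $\mu(Q_{\alpha}^j)$, and summing over $k$ with the observation that only scales $k$ satisfying $\delta^k \gtrsim t$ contribute, one obtains an estimate of the form $C_2 t^{\eta}\mu(Q_{\alpha}^j)$ with $\eta$ depending on $\delta$ and $C_D$. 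The hard part will be this last step: keeping uniform control of the constants across the geometric sum requires a combinatorial bound on the number of siblings of any $z_{\beta}^{j+k}$, which itself follows from doubling applied to the $\delta^{j+k}$-separated family contained in any ball of radius $C\delta^{j+k-1}$, and a careful choice of $\delta$ so that $\eta > 0$ genuinely holds rather than degenerating into a logarithmic gain.
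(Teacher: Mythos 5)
The paper does not prove Lemma \ref{cubes}: it is stated with a citation to Christ's original construction, with a remark that the algorithm is modified so as to permit atoms. So there is no internal proof to compare your sketch against; the relevant comparison is with Christ's argument and its modern variants.

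Your sketch has the correct overall architecture (maximal $\delta^j$-separated nets at each scale, a parent map giving a tree structure, a geometric-decay argument for the boundary estimate), and the elementary observations you use are right: maximality of $\mathcal Z_j$ gives the covering $X=\bigcup B(z_\alpha^j,\delta^j)$, hence $\rho(z_\alpha^j,\pi(z_\alpha^j))<\delta^{j-1}$; and the diameter and interior-ball bounds do follow from a telescoping geometric series once $\delta$ is small. However, two steps of the sketch have genuine gaps. First, defining $\widetilde Q_\alpha^j$ as the set of $y$ for which the nearest center at all sufficiently fine scales is a descendant of $z_\alpha^j$ does not obviously produce an \emph{open} set, nor is it a priori clear that this eventual-ancestry assignment stabilizes for $\mu$-a.e.\ $y$; proving the latter requires precisely the small-boundary estimate (6), so the construction as you describe it is circular. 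Christ's construction avoids this by defining the cubes more concretely (essentially as unions of balls around the descendants in the tree, followed by a disjointification step), which is manifestly open, and only afterward derives (6) as a property of the already-built cubes. Second, the sketch of (6) as stated only yields a logarithmic gain: covering the ambiguous set at each scale $j+k$ by balls of radius $C\delta^{j+k}$ and summing over the $\approx\log_\delta t$ relevant scales gives $\lesssim\log(1/t)\,\mu(Q_\alpha^j)$, not $t^\eta\mu(Q_\alpha^j)$. The actual mechanism is an iterated one-step proportional decay: one proves that the measure of the $\delta^{j+k+1}$-boundary layer inside $Q_\alpha^j$ is at most $\theta$ times the measure of the $\delta^{j+k}$-boundary layer, for a fixed $\theta<1$ depending only on $C_D$ and $\delta$; this uses the fact that each cube at any scale contains an interior ball of comparable measure well separated from its complement, and it is this step that forces the constraint on $\delta$. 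You flag the risk of a merely logarithmic gain, but you do not supply the one-step contraction that actually removes it. Finally, the paper's version of the lemma differs from Christ's in permitting atoms (singleton cubes), which your sketch does not address at all; in particular, the ``adjust on a $\mu$-null set to keep the cubes open'' step is incompatible with an atom sitting on a putative boundary, and the paper resolves this by making the singleton $\{x\}$ itself a cube from some generation onward.
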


We will call those open sets dyadic cubes of the space of homogeneous type $X$. For a cube $Q=Q^j_{\alpha}$, $j$ is called the generation of $Q$, and we set $l(Q)= \delta^j$. By $(4)$ and $(5)$, $l(Q)$ is comparable to the diameter of $Q$, and we call it, in analogy with $\R^n$, the length of $Q$.  Whenever $Q^{j+1}_{\alpha} \subset Q^j_{\beta}$, we will say that $ Q^{j+1}_{\alpha}$ is a child of $Q^{j}_{\beta}$, and $Q^{j}_{\beta}$ the parent of $Q^{j+1}_{\alpha}$. For every dyadic cube $Q$, the notation $\widetilde{Q}$ denotes the collection of all the children of $Q$. It is easy to check that each dyadic cube has a number of children uniformly bounded. We remark that all atoms must be isolated points by the doubling condition. In case $x$ is an atom, there exists a smallest integer $j$ such that $\{ x \}$ is a dyadic cube $Q_{\alpha}^j$ of generation $j$. In that case, $Q^j_{\alpha}$ has only one child $Q_{\beta}^{j+1} = Q^j_{\alpha}$ as set, $Q_{\beta}^{j+1}$ has only one child, and so on.

A neighbor of $Q$ is any dyadic cube $Q'$ of the same generation with $\rho(Q,Q') < l(Q)$. The notation $\widehat{Q}$ will denote the union of $Q$ and all its neighbors. It is clear that $Q$ and $\widehat{Q}$ have comparable measures. It is easy to check that a cube $Q$ has a number of neighbors that is uniformly bounded. 

\medskip

\subsection{Singular integral operators}For $1 \leq p \leq \infty $, the space of $p$-integrable complex valued functions on $X$ with respect to $\mu$ is denoted by $L^p(X)$, the norm of a function $f \in L^p(X)$ by $\|f\|_p$, the duality bracket given by $\langle f,g\rangle = \int_X{fg d\mu}$ (we do mean the bilinear form), and the mean of a function $f$ on a set $E$ denoted by $[f]_E = \mu(E)^{-1} \int_E{f d\mu}.$ For any $x,y \in X$, we set
\[ \lambda (x,y) = \mu (B(x, \rho (x,y))). \]
It is easy to see that $\lambda(x,y)$ is comparable to $\lambda(y,x)$, uniformly in $x, y.$

\begin{definition} 
A standard Calder\'on-Zygmund kernel on $X$ is a function $ K:  X \times X \backslash \{ x=y \} \rightarrow \mathbb{C} $ such that there exists a constant $\alpha >0$ for which
\begin{equation}\label{standard1}
|K(x,y)| \lesssim \frac{1}{\lambda(x,y)} 
\end{equation}
and
\begin{equation}\label{standard2}
| K(x,y) - K(x',y) | + | K(y,x) - K(y,x') | \lesssim \left ( \frac{\rho (x,x')}{\rho (x,y)} \right )^{\alpha} \frac{1}{\lambda(x,y)}
\end{equation}
whenever $\rho (x,x') \leq \rho (x,y) /2$, and $\rho(x,y)>0$.
\end{definition}
Denote by $D_{\alpha}$ the space of all Hölder continuous functions of order $\alpha \in (0,1]$ with compact support and $D'_{\alpha}$ the dual space of $D_{\alpha}$ (we refer to \cite{Christ} for the detail). A singular integral operator (sio) $T$ on $X$ is a continuous mapping from $D_{\alpha}$ to $D'_{\alpha}$ which is associated to a standard kernel $K(x,y)$, in the sense that
\[ \langle Tf,g \rangle = \iint K(x,y)f(y)g(x) d\mu(x) d\mu(y) \]
whenever $f,g \in D_{\alpha}$ with disjoint supports. Standard computations and a density argument show that one can extend $T$ from $L^p(K)$ into $L^{\infty}_{loc}(K^c)$ for any compact $K$, so that
\begin{equation} \label{etoile}
Tf(x) = \int {K(x,y)f(y)d\mu(y)} 
\end{equation}
for all $f \in L^p(X)$ with $\supp f \subset K$ and almost all $x \notin K$. In the following, $T^{\ast}$ will denote the operator adjoint to $T$. Let us state the well known standard estimates for singular integral operators.

\begin{prop}[Standard Calder\'on-Zygmund estimates]
Let $T$ be a singular integral operator on $X$. Let $r>0, x_B \in X$ and let $B$ be the ball of radius $r$ centered in $x_B$. Let $g$ be a function of mean $0$ supported on $B$, and $f$ a function supported on the complement of $2B$. Then we have the standard estimate
\begin{equation} \label{standard estimate}
|\langle g , Tf \rangle | \lesssim \|g\|_1 \int_{(2B)^c}{\left ( \frac{r}{\rho(x_B,y)} \right )^{\alpha} \lambda(x_B,y)^{-1} |f(y)| d\mu(y)}.
\end{equation}
Similarly in the dyadic setting, if $Q$ is a dyadic cube in $X$ of center $z_Q$, $g$ a function of mean $0$ supported on $Q$ and $f$ a function supported on the complement of $\widehat{Q}$, we have
\begin{equation}\label{standard estimate dyadic}
|\langle g , Tf \rangle | \lesssim \|g\|_1 \int_{{\widehat{Q}}^c}{\left ( \frac{l(Q)}{\rho(z_Q,y)} \right )^{\alpha} \lambda(x_Q,y)^{-1} |f(y)| d\mu(y)}.
\end{equation}
\end{prop}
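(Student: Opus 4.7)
The plan is to treat both parts by the same Calderón–Zygmund maneuver: the disjoint support of $f$ and $g$ allows us to write $\langle g, Tf\rangle$ as an absolutely convergent double integral against $K$, after which the mean-zero condition on $g$ lets us replace $K(x,y)$ by the kernel difference $K(x,y)-K(x_B,y)$ (respectively $K(x,y)-K(z_Q,y)$) and exploit the Hölder regularity \eqref{standard2}.

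For \eqref{standard estimate}, start from $\langle g, Tf\rangle = \iint_{B\times (2B)^c} g(x) K(x,y) f(y)\,d\mu(x)\,d\mu(y)$, and use $\int g\,d\mu=0$ to subtract $K(x_B,y)$ in the first variable at no cost, so that
\[
\langle g, Tf\rangle = \iint_{B\times(2B)^c} g(x)\bigl[K(x,y)-K(x_B,y)\bigr] f(y)\,d\mu(x)\,d\mu(y).
\]
For $x\in B$ and $y\in(2B)^c$ one has $\rho(x,x_B)\le r$ and $\rho(x,y)\approx \rho(x_B,y)\ge 2r$ by the triangle inequality, hence $\lambda(x,y)\approx \lambda(x_B,y)$ by doubling. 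Applying \eqref{standard2} (possibly after a short chain of intermediate points to meet the hypothesis $\rho(x,x')\le \rho(x,y)/2$ with a universal constant) yields the pointwise bound $|K(x,y)-K(x_B,y)|\lesssim (r/\rho(x_B,y))^{\alpha}\lambda(x_B,y)^{-1}$. Fubini and integration of $|g|$ in $x$ then deliver \eqref{standard estimate}.

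The dyadic case \eqref{standard estimate dyadic} is identical in structure, with $z_Q$ replacing $x_B$ and $\widehat{Q}$ replacing $2B$; the only point to verify is the geometric comparability $\rho(x,z_Q)\lesssim l(Q)\lesssim \rho(z_Q,y)$ for $x\in Q$, $y\in \widehat{Q}^c$. The first inequality is Lemma \ref{cubes}(4). For the second, if $y\in\widehat{Q}^c$ and $Q'$ denotes the dyadic cube of the same generation as $Q$ containing $y$, then $Q'$ is not a neighbor, so $\rho(Q,Q')\ge l(Q)$, and consequently $\rho(y,Q)\ge \rho(Q',Q)\ge l(Q)$, which forces $\rho(z_Q,y)\ge l(Q)$. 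From there $\rho(x,y)\approx \rho(z_Q,y)$ and $\lambda(x,y)\approx \lambda(z_Q,y)$, and \eqref{standard2} again produces $|K(x,y)-K(z_Q,y)|\lesssim (l(Q)/\rho(z_Q,y))^{\alpha}\lambda(z_Q,y)^{-1}$.

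The only real obstacle is the constant-chasing needed to invoke \eqref{standard2}: the hypothesis there is $\rho(x,x')\le \rho(x,y)/2$, whereas in the dyadic case one initially has only $\rho(x,z_Q)\lesssim \rho(z_Q,y)$ with implicit constant depending on $C_1$, $a_0$, $\delta$. This is handled by a standard two-case split: when $\rho(z_Q,y)$ is at least a fixed large multiple of $l(Q)$, \eqref{standard2} applies as stated; when it is merely of the order of $l(Q)$, the ratio $(l(Q)/\rho(z_Q,y))^{\alpha}$ is bounded below, so the trivial size bound \eqref{standard1} applied to each of $|K(x,y)|$ and $|K(z_Q,y)|$ already gives the desired pointwise estimate. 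Everything else is Fubini plus doubling.
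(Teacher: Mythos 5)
The paper states this proposition as ``standard'' and gives no proof, so there is no internal argument to compare against; what you wrote is the usual Calder\'on--Zygmund argument and is correct in outline: disjoint supports give the kernel representation, $\int g\,d\mu=0$ lets you subtract $K(x_B,y)$ (resp.\ $K(z_Q,y)$), and the H\"older regularity \eqref{standard2} plus the comparabilities $\rho(x,y)\approx\rho(x_B,y)$ and $\lambda(x,y)\approx\lambda(x_B,y)$ from the triangle inequality and doubling give the pointwise kernel bound, after which Fubini finishes.

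One caveat worth flagging: the parenthetical suggestion of chaining intermediate points to reach the hypothesis $\rho(x,x')\le\rho(x,y)/2$ is not reliable in a general space of homogeneous type, since nothing guarantees the existence of points between $x$ and $x_B$. The two-case split you describe at the end is the correct device, and it is needed for the ball case exactly as for the dyadic one: if $y\in(2B)^c$ has $\rho(x_B,y)$ only slightly larger than $2r$ and $x$ lies near the boundary of $B$, then $\rho(x,x_B)$ can exceed $\rho(x,y)/2$, so \eqref{standard2} is not directly applicable; on that range one uses the size bound \eqref{standard1} on $K(x,y)$ and $K(x_B,y)$ separately together with $(r/\rho(x_B,y))^{\alpha}\gtrsim 1$, while for $\rho(x_B,y)$ at least a fixed large multiple of $r$ the smoothness estimate applies as stated. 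With that correction the proof is complete; the dyadic geometric fact $\rho(z_Q,y)\ge l(Q)$ for $y\notin\widehat{Q}$, which you verify via the partition into same-generation cubes, is as stated and holds for $\mu$-a.e.\ $y$, which is all the integral estimate requires.
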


Over the course of this article, we will often use what we call Hardy type inequalities. The prototype of Hardy type inequalities is 
\[ \left |  \int_I \int _J {\frac{f(y)g(x)}{x-y}dydx}  \right | \leq C \left ( \int_I{|f(y)|^{\nu} dy} \right )^ {\frac{1}{\nu}} \left ( \int_J{|g(x)|^{\nu'} dx} \right )^ {\frac{1}{\nu'}},   \]
when $I,J$ are adjacent intervals, $\supp f \subset I$, $\supp g \subset J$, and $1<\nu<\infty$, $\nu' = \frac{\nu}{\nu-1}$. This easily follows from the boundedness of the Hardy operator $Hf(x) = \frac{1}{x} \int_0^x {f(t)dt}$ (hence our terminology), and does not use regularity of $\frac{1}{x-y}$. Thus one can hope to extend such inequalities on spaces of homogeneous type with kernel $1/{\lambda(x,y)}$ and measure $\mu$ when $I,J$ are reasonable disjoint subsets of $X$. Let us state such a result in the dyadic setting.

\begin{lem}\label{Hardydyadic} 
Let $Q, Q'$ be two disjoint dyadic cubes in $X$. Let $1< \nu < +\infty$, with dual exponent $\nu'$. There exists $C <+\infty$ such that for all function $f$ supported on $Q$, $f\in L^{\nu}(Q)$, and all function $g$ supported on $Q'$, $g \in L^{\nu'}(Q')$, we have
\begin{equation}  \label{2etoile}
\int_{Q'} \int _Q {\frac{|f(y)g(x)|}{\lambda(x,y)}d\mu(x)d\mu(y)}  \leq C \|f\|_{\nu}  \|g\|_{\nu'}. 
\end{equation}
The constant $C$ only depends on $C_D$ and $\nu$.
\end{lem}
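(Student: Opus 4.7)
The plan is to exploit the small boundary condition (item~(6) of Lemma~\ref{cubes}) through a double dyadic shell decomposition of $Q$ and $Q'$. One may assume $f, g \geq 0$. For $k \geq 0$, introduce the shells
\[ A_k = \bigl\{y \in Q : \delta^{k+1} l(Q) < \rho(y, X \setminus Q) \leq \delta^k l(Q)\bigr\}, \]
and $B_m \subset Q'$ defined analogously relative to $l(Q')$, so that $\mu(A_k) \leq C_2 \delta^{k\eta} \mu(Q)$ and $\mu(B_m) \leq C_2 \delta^{m\eta} \mu(Q')$.

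The bilinear form decomposes as
\[ \int_{Q'}\int_Q \frac{f(y) g(x)}{\lambda(x,y)}\, d\mu(y)\, d\mu(x) = \sum_{k, m \geq 0} \int_{B_m}\int_{A_k} \frac{f(y) g(x)}{\lambda(x, y)}\, d\mu(y)\, d\mu(x). \]
For $(y, x) \in A_k \times B_m$, the disjointness $Q \cap Q' = \varnothing$ forces $\rho(x, y) \geq \max\bigl(\rho(y, X\setminus Q),\, \rho(x, X\setminus Q')\bigr)$, and, combined with the symmetry $\lambda(x, y) \approx \mu(B(y, \rho(x, y)))$ and the doubling property, this yields a pointwise lower bound on $\lambda$. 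Applying Hölder's inequality to the shell-localized functions $f\mathbf{1}_{A_k}$, $g\mathbf{1}_{B_m}$ and inserting the small-boundary bounds on $\mu(A_k), \mu(B_m)$, each $(k, m)$-term is controlled by an expression of the form
\[ c(k, m)\, \delta^{k\eta/\nu' + m\eta/\nu}\, \|f\mathbf{1}_{A_k}\|_\nu\, \|g\mathbf{1}_{B_m}\|_{\nu'}, \]
with a coefficient $c(k, m)$ depending on $\nu$, the doubling constant $C_D$, and the ratio $\mu(Q')/\mu(Q)$. The sum over $(k, m)$ is then handled by a discrete Hölder inequality using the orthogonality $\sum_k \|f\mathbf{1}_{A_k}\|_\nu^\nu = \|f\|_\nu^\nu$ and its dual for $g$.

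The main obstacle is a sharp control of $c(k, m)$ uniform across the full range $1 < \nu < \infty$: the crude bound $\mu(B(y, \delta^j l)) \gtrsim C_D^{-j} \mu(B(y, l))$ coming from doubling yields a growth $C_D^{\min(k, m)}$ that may swallow the geometric decay $\delta^{(k + m)\eta}$ when $\nu$ (or $\nu'$) is large. To overcome this uniformly in $\nu$, I would split the double sum into regimes: when $\min(k, m)$ is small, the larger of $\rho(y, X\setminus Q)$ and $\rho(x, X\setminus Q')$ is of order $\max(l(Q), l(Q'))$, so $\lambda(x, y)$ is already comparable to $\max(\mu(Q), \mu(Q'))$ and the estimate is immediate (after a preliminary reduction to the case of comparable measures, obtained by zero-extending $f, g$ to the smallest common dyadic ancestor, whose dyadic children all have comparable measure by doubling); when both $k$ and $m$ are large, the product $\mu(A_k)\,\mu(B_m) \lesssim \delta^{(k + m)\eta}\mu(Q)\mu(Q')$ is small enough to absorb the doubling loss through a judicious bilinear Hölder. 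The strict positivity of $\eta/\nu$ and $\eta/\nu'$ for $\nu \in (1, \infty)$ is essential here and clarifies why the inequality degenerates at the endpoint $\nu = 1$.
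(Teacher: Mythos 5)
There is a genuine gap in the deep diagonal regime of your double shell decomposition, and the fix you sketch does not close it. On $A_k\times B_m$ your only lower bound for the denominator is $\rho(x,y)\gtrsim\delta^{\min(k,m)}l$, hence $\lambda(x,y)\gtrsim\mu(B(x,\delta^{\min(k,m)}l))\gtrsim C_D^{-c\min(k,m)}\mu(Q)$ with $c\approx\log_2(1/\delta)$ coming from doubling. After Hölder on the shell-localized functions, the $(k,m)$ term carries the coefficient $C_D^{c\min(k,m)}\,\delta^{\eta(k/\nu'+m/\nu)}$, which on the diagonal $k=m$ is $(C_D^{c}\delta^{\eta})^{k}$. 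Nothing in Christ's lemma relates $\eta$, $\delta$ and $C_D$ so as to force $C_D^{c}\delta^{\eta}<1$, and the orthogonality $\sum_k\|f1_{A_k}\|_\nu^\nu=\|f\|_\nu^\nu$ together with its dual only converts the diagonal sum into a bound involving $\sup_k$ of these coefficients, not their sum, so boundedness of the coefficients is still required. Already for genuine dyadic cubes in $\R^n$, $n\ge 2$ (where one may take $\eta=1$, $\delta=1/2$, and the doubling loss per scale is $2^{n}$), the diagonal coefficient is $2^{k(n-1)}\rightarrow\infty$: the claimed absorption of the doubling loss by $\mu(A_k)\mu(B_m)\lesssim\delta^{(k+m)\eta}\mu(Q)\mu(Q')$ fails even in the model Euclidean case, although the lemma itself is of course true there. (Your preliminary reduction to sibling cubes by zero extension is fine, and the regime $\min(k,m)=O(1)$ is fine; the problem is precisely the regime you flag, with $k\approx m$ both large.)

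What is missing is tangential localization: bounding $\lambda(x,y)^{-1}$ uniformly over $A_k\times B_m$ treats the whole shell $A_k$, whose diameter is $\sim l$, as if it sat inside the small ball $B(x,\delta^{\min(k,m)}l)$ that normalizes the kernel, whereas only the portion of the layer inside $B(x,C\rho(x,y))$ actually contributes at that kernel size. You need to decompose additionally in $\rho(x,y)$ itself, or equivalently cover $Q$ by Whitney-type dyadic pieces whose side length is comparable to their distance to the fixed point $x$; the small boundary condition is then applied \emph{relative to each such piece}, giving a relative layer measure $\lesssim(\rho(x,Q)/\delta^j)^{\eta}$ at scale $\delta^j$, and a Hölder inequality at an exponent $1<r<\nu$ inside each piece yields, on the half of the domain where $\rho(y,Q^c)\le\rho(x,Q)$, the pointwise bound $\int f(y)\lambda(x,y)^{-1}d\mu(y)\lesssim M_\mu(|f|^r)^{1/r}(x)$ with a convergent geometric series, the other half being symmetric. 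This is exactly the paper's proof: a splitting according to which of $\rho(y,Q^c)$, $\rho(x,(Q')^c)$ is larger, a Whitney covering of $Q$ adapted to $x$, the small-boundary estimate within each Whitney cube, and the Hardy--Littlewood maximal theorem (which is where $r<\nu$, hence $\nu>1$, enters). A single global Hölder on products of full shells cannot substitute for this localization step.
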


We refer to Section $9$ for the proof of Lemma \ref{Hardydyadic}. Observe that this result immediately yields the following corollary regarding singular integral operators.

\begin{lem} \label{Hardy}
Let $T$ be a singular integral operator on $X$. Let $1< \nu < +\infty$ with dual exponent $\nu'$. 
\begin{itemize}
\item There exists $C <+\infty$ such that for every disjoint dyadic cubes $Q$,$Q'$, and every functions $f,g$ respectively supported on $Q,Q'$, with $f \in L^{\nu}(Q)$, $g \in L^{\nu'}(Q'),$ we have
\begin{equation} \label{Hardy1}
| \langle Tf , g \rangle | \leq C \|f\|_{L^{\nu}(Q)}  \|g\|_{L^{\nu'}(Q')}.
\end{equation}
\item There exists $C <+\infty$ such that for every dyadic cube $Q$, and every function $f$ supported on $Q$ with $f \in L^{\nu}(Q)$, we have
\begin{equation}  \label{Hardy2}
\| Tf \|_{L^{\nu}(\widehat{Q} \backslash Q)} \leq C \|f\|_{L^{\nu}(Q)}. 
\end{equation}
\end{itemize}
\end{lem}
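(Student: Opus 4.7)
The plan is to obtain both inequalities as direct consequences of Lemma \ref{Hardydyadic}, exactly as the lead-in sentence announces; the substantive Hardy-type content is deferred to Section $9$, and here one only needs to combine the dyadic Hardy bound with the size estimate on $K$ and the standard neighbor decomposition.

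For \eqref{Hardy1}, first observe that since $Q$ and $Q'$ are disjoint dyadic cubes (hence bounded by item (4) of Lemma \ref{cubes}), the functions $f$ and $g$ have disjoint compact supports, so the kernel representation \eqref{etoile} gives
\[ \langle Tf, g \rangle = \int_{Q'} \int_Q K(x,y)\, f(y)\, g(x)\, d\mu(y)\, d\mu(x). \]
Applying the size estimate \eqref{standard1} and then Lemma \ref{Hardydyadic} to $|f|$ and $|g|$ immediately yields
\[ |\langle Tf, g \rangle| \lesssim \int_{Q'} \int_Q \frac{|f(y)||g(x)|}{\lambda(x,y)}\, d\mu(y)\, d\mu(x) \lesssim \|f\|_{L^{\nu}(Q)}\, \|g\|_{L^{\nu'}(Q')}. \]

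For \eqref{Hardy2}, I would argue by duality. Decompose $\widehat{Q} \setminus Q = \bigcup_{j=1}^{N} Q_j$, where the $Q_j$ are the (finitely many, uniformly bounded number of) neighbors of $Q$; since neighbors are dyadic cubes of the same generation as $Q$ and distinct from it, they are pairwise disjoint and each is disjoint from $Q$. Given any $g$ supported in $\widehat{Q}\setminus Q$ with $\|g\|_{L^{\nu'}} \leq 1$, split $g = \sum_j g\mathbf{1}_{Q_j}$ and apply \eqref{Hardy1} to each pair $(Q, Q_j)$:
\[ |\langle Tf, g\rangle| \leq \sum_{j=1}^N |\langle Tf, g\mathbf{1}_{Q_j}\rangle| \lesssim \|f\|_{L^{\nu}(Q)} \sum_{j=1}^N \|g\mathbf{1}_{Q_j}\|_{L^{\nu'}(Q_j)} \lesssim N^{1/\nu}\, \|f\|_{L^{\nu}(Q)}, \]
where the last step is Hölder's inequality on the finite sum using the pairwise disjointness of the $Q_j$. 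Taking the supremum over such $g$ gives \eqref{Hardy2}.

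There is no real obstacle here: the decomposition into neighbors is available from the basic properties of Christ's cubes recalled in Section~2, and the kernel size bound plus Lemma \ref{Hardydyadic} supplies everything else. The genuine difficulty of the lemma is entirely hidden in Lemma \ref{Hardydyadic}, whose proof (relying on the small boundary condition) is postponed to Section~9.
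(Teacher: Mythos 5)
Your proof is correct and follows exactly the route the paper takes: the authors' proof of Lemma \ref{Hardy} consists of the single sentence ``Use \eqref{standard1}, Lemma \ref{Hardydyadic}, and for the second part the fact that the number of neighbors of any given cube is uniformly bounded,'' and you have simply unpacked these three ingredients — the kernel size bound plus Lemma \ref{Hardydyadic} for \eqref{Hardy1}, and the finite neighbor decomposition with a duality/Hölder step for \eqref{Hardy2}. No gap, no deviation.
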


\begin{proof} 
Use \eqref{standard1}, Lemma \ref{Hardydyadic}, and for the second part the fact that the number of neighbors of any given cube is uniformly bounded.
\end{proof}

\medskip

\section{Main results and comments}

\medskip

Our main result is the following.

 \begin{thm}\label{AR}
Let $ 1< p,q < + \infty$. Let $T$ be a singular integral operator with locally bounded kernel. Assume that there exists a $(p,q)$ dyadic pseudo-accretive system adapted to $T$. Then $T$ extends to a bounded operator on $L^2(X)$, with bounds independent of $\| K \|_{\infty,loc} \,$. 
\end{thm}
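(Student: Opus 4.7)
The plan is to prove Theorem \ref{AR} via the Beylkin--Coifman--Rokhlin (BCR) scheme applied to Haar-type wavelets built from the pseudo-accretive system rather than the standard dyadic Haar basis. Local boundedness of $K$ guarantees absolute convergence of all sums a priori, so the main task is quantitative: produce a bound $|\langle Tf,g\rangle| \lesssim \|f\|_2\|g\|_2$ for $f,g \in L^2\cap L^\infty$ with bounded support, where the implicit constant depends only on the accretivity and standard Calder\'on--Zygmund constants.

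First, I would construct for each dyadic cube $Q$ an adapted Haar basis $\{\psi_Q^{1,k}\}$, $\{\psi_Q^{2,k}\}$ supported on $Q$, piecewise constant on the children of $Q$, and designed so that $\int \psi_Q^{1,k} b^2_Q\, d\mu = \int \psi_Q^{2,k} b^1_Q\, d\mu = 0$, normalized to form a biorthogonal system. Together with adapted conditional expectations $E_Q$ (legitimate thanks to the quantitative accretivity $|[b_Q]_Q|\gtrsim 1$), this yields a Littlewood--Paley decomposition $f=\sum_Q D_Q^1 f$ and $g=\sum_R D_R^2 g$ with an $L^2$ square-function estimate controlling $\|f\|_2$ and $\|g\|_2$.

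Next, I would expand $\langle Tf,g\rangle = \sum_{Q,R}\langle TD_Q^1 f, D_R^2 g\rangle$ and apply the BCR trichotomy: (i) separated cubes with $Q\cap \widehat R=R\cap \widehat Q=\varnothing$; (ii) nested cubes $Q\subsetneq R$ or $R\subsetneq Q$; and (iii) comparable-scale adjacent cubes. Case (i) is handled by the dyadic standard estimate \eqref{standard estimate dyadic} (exploiting the mean-zero property of $D_Q^1 f$) and summed via Schur's test, the geometric decay being provided by H\"older regularity. Case (iii) is controlled by combining the testing conditions with Lemma \ref{Hardy}. The delicate piece is the paraproduct (ii): after pulling out the appropriate average of $g$ (resp.\ $f$) on the coarser side, the telescoping sum collapses and the remaining object is a paraproduct whose $L^2$-boundedness reduces to a Carleson measure bound on coefficients of the form $\mu(R)^{-1/2}\langle Tb^1_R,\psi_R^{2,k}\rangle$. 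This Carleson bound is obtained from the a priori $L^q$ control on $Tb^1_R$ through a John--Nirenberg--type stopping-time argument on the adapted martingale.

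The main obstacle, and the reason for using adapted wavelets in the first place, is the paraproduct term, where boundary contributions are unavoidable: when restricting $D_Q^1 f$ to $Q\subsetneq R$ one must split $b^1_R$ across $Q$ and $\widehat Q\setminus Q$, generating cross terms that are then treated via \eqref{Hardy1}--\eqref{Hardy2}. H\"older's inequality closes the estimate cleanly whenever $1/p+1/q\le 1$, but when both exponents are close to $1$ a residual term survives which cannot be absorbed by any of the previous pieces; this is exactly where the extra Hardy-type weak boundedness hypothesis signalled in the introduction must be invoked. The remainder of the argument is bookkeeping: summing the three classes by Cauchy--Schwarz and invoking the $L^2$-boundedness of the adapted square function to convert the resulting sequence norms back into $\|f\|_2\|g\|_2$.
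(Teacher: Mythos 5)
Your overall blueprint (build adapted Haar wavelets, run BCR, treat separated/nested/adjacent pairs via Calder\'on--Zygmund estimates, Hardy inequalities, and paraproduct, note that $1/p+1/q>1$ forces an extra hypothesis) is the right outline, and it does match the paper's main tool. But two structural pieces are missing, and without them the scheme as written does not close.

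First, the decomposition $f=\sum_Q D_Q^1 f$, $g=\sum_R D_R^2 g$ that you invoke does not exist. Adapted martingale differences $\Delta_Q^b$ telescope only when the weight $b$ is \emph{the same} across scales, and even then only on cubes where $b$ is strongly pseudo-accretive (averages bounded below on $Q$ \emph{and} all its children). Here the functions $b^i_Q$ vary cube by cube and are supported only on $Q$, so there is no single accretive weight to adapt to, no telescoping, and hence no Littlewood--Paley identity or square-function estimate of the kind you are using as input. The paper instead fixes a reference cube $Q_0$, runs the stopping-time Lemma \ref{Hofmann} with the \emph{single} function $b^i_{Q_0}$, and replaces $f,g$ by the projections $\Pi^{b^i_{Q_0}}f = f\,1_{Q_0\backslash\cup P^i_n}+\sum_n [f]_{P^i_n}b^i_{P^i_n}$; only on the good (spa/dpa) part does it expand in adapted wavelets, via \eqref{Hofmann5}.

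Second, for a \emph{local} $Tb$ theorem you cannot aim to estimate $\langle Tf,g\rangle$ directly, because a single accretive system $\{b^i_{Q_0}\}$ has no reach past the stopping cubes $P^i_n$. The paper first reduces to the $T1$-type testing conditions \eqref{T1} and then iterates: Lemmas \ref{red1} and \ref{red2} produce inequalities of the form $A\lesssim B+(1-\varepsilon)A+C$, where the $(1-\varepsilon)A$ piece collects the stopping cubes (using the packing property \eqref{packing}) and is absorbed since $A<\infty$ a priori. Only after this reduction does the BCR algorithm get applied to $\langle\Pi^2 f, T\Pi^1 g\rangle$. Your plan has no counterpart to this absorption step. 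Finally, the paraproduct is not handled in the paper via a Carleson/John--Nirenberg estimate: the main piece of $V_{1,2}$ is dispatched by the $L^\nu$-boundedness of the adapted perfect-dyadic form $L$ (Lemma \ref{Mart2}) paired against $T(b^1)\in L^{q'}$, and the residual needing \eqref{WBP1} is a specific cross term ($\Sigma_3$, where a $P^2_j$ sits adjacent to a $P^1_i$), not the main paraproduct. Your intuition that the hard case is a boundary contribution across $\widehat Q\setminus Q$ that H\"older handles when $1/p+1/q\le1$ is correct, but the mechanism by which it arises and is isolated is different.
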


Let us explain what is a $(p,q)$ dyadic pseudo-accretive system adapted to $T$.

\begin{definition}{$(p,q)$ dyadic pseudo-accretive system.} \\
Let $ 1< p,q < + \infty$ with dual exponents $p', q'$, and let $T$ be a singular integral operator on $X$. We say that a collection of functions $(\{b^1_Q\}_Q,\{b^2_Q\}_Q)$ is a $(p,q)$ dyadic pseudo-accretive system adapted to $T$ if there exists a constant $C_A  <+\infty$, such that for each dyadic cube Q, $b^1_Q$, $b^2_Q$ are supported on Q with
 \begin{equation}\label{accretive1}
 \int_Q{b^1_Q d\mu} = \mu(Q) = \int_Q{b^2_Q d\mu},
   \end{equation}
   \begin{equation}\label{accretive2}
 \int_Q{ \left (|b^1_Q|^p + |b^2_Q|^q \right ) d\mu} \leq C_A \, \mu(Q),
  \end{equation}
   \begin{equation}\label{accretive3}
 \int_{\widehat{Q}}{\left ( |T(b^1_Q)|^{q'} + |T^{\ast}(b^2_Q)|^{p'} \right ) d\mu} \leq C_A \, \mu(Q),
  \end{equation}
Furthermore, the functions $b^i_Q$ are required to satisfy the following properties: for all $C<+\infty$, there exists $C_H <+\infty$ and $1<\nu<+\infty$ such that for every $(Q,Q')$ dyadic cubes, for every dyadic cubes $R' \subset Q'$,  $R_n \subset (\widehat{R'}\backslash R') \cap Q$, $R_n$ mutually disjoint, with $\rho(R',R_n) < l(R_n)$, $[|b^1_{Q'}|^p]_{R'} \leq C$, $[|b^2_{Q}|^q ]_{R_n} \leq C$, and for every set of coefficients $(\alpha_{n})_n$, we have
\begin{equation}\label{WBP1}
\left | \left \langle b^2_Q \left ( \sum_n{\alpha_n 1_{R_n}} \right ), T(b^1_{Q'} 1_{R'})  \right \rangle \right | \leq C_H \left \| \sum_n{\alpha_n 1_{R_n}}  \right  \|_{\nu} \mu(R')^{\frac{1}{\nu'}}.
\end{equation}
We also need a control of the diagonal terms: for all $C<+\infty$, there exists $C_{WBP} <+\infty$ such that for every $(Q,Q')$ dyadic cubes, for every dyadic cube $R \subset Q \cap Q'$ with $[|b^1_{Q'}|^p]_{R} \leq C$, $[|b^2_{Q}|^q ]_{R} \leq C$, we have
\begin{equation}\label{WBP2}
\left | \left \langle b^2_Q 1_{R}  , T(b^1_{Q'} 1_{R})  \right \rangle \right | \leq C_{WBP} \, \mu(R).
\end{equation}
We require the $b^i_Q$ to satisfy also the symmetric properties, with respectively $b^1$ instead of $b^2$, $p$ instead of $q$, $q$ instead of $p$, and $T^{\ast}$ instead of $T$.
\end{definition}

We remark that the statement has a converse. If $T$ is bounded, then the collection $(\{1_Q\}_Q,\{1_Q\}_Q)$ is a $(p,q)$ accretive system adapted to $T$ for any exponents. Several comments are in order. Let us begin with the case $1/p +1/q >1.$
\begin{enumerate}
\item In this case, we cannot use the Hardy inequality \eqref{2etoile} with exponents $p$ and $q$ replacing $\nu$ and $\nu'$. So our hypotheses \eqref{WBP1} and \eqref{WBP2} are a substitute for the missing \eqref{2etoile}. In practice, they could very well hold due to specific relations or cancellations between the $b^i_Q$ and $T$.
\item Observe that, while being a rather unsatisfactory condition, \eqref{WBP1} and \eqref{WBP2} imply the following weaker statement: for every dyadic subcubes $R,R'$ of cubes $Q,Q'$, such that $R,R'$ are neighbors, and $b^1_{Q'}$, $b^2_Q$ satisfy the same size estimates as above on $R$, $R'$, we have
\begin{equation} \label{WBPC}
|\langle b^2_Q 1_R, T(b^1_{Q'} 1_{R'})  \rangle | \lesssim \mu(R').
\end{equation}
This weaker property is more satisfactory as it is a lot closer to what we are used to calling a weak boundedness property. Unfortunately, \eqref{WBPC} suffices for all but one term we could not estimate otherwise than assuming the stronger \mbox{property \eqref{WBP1}.} 
\item Note that it is not clear whether for any systems $(b^1_Q),(b^2_Q)$ satisfying \eqref{accretive1} and \eqref{accretive2}, if $T$ is bounded on $L^2(X)$ then \eqref{accretive3}, \eqref{WBP1}, \eqref{WBP2} hold. But a $(p,q)$ accretive system adapted to $T$ is, as its name indicates, not any pair of systems.
\end{enumerate} 
Let us now assume $1/p +1/q \leq 1$. First, we show that \eqref{WBP1} is necessarily satisfied as a consequence of \eqref{accretive2} and \eqref{accretive3}, and it is an application of Lemma \ref{Hardy}. This is stated in the following proposition.

\begin{prop} \label{particular}
Let $ 1< p,q < + \infty$ with dual exponents $p', q'$, and such that $1/p + 1/q \leq 1$. Let $T$ be a singular integral operator on $X$. Suppose that $(b^1_Q), (b^2_Q)$ constitute a collection of functions supported on $Q$, satisfying \eqref{accretive1}, \eqref{accretive2}, and the following weaker form of \eqref{accretive3}, 
\begin{equation} \label{accretive3bis} 
\int_{Q}{\left ( |T(b^1_Q)|^{q'} + |T^{\ast}(b^2_Q)|^{p'} \right ) d\mu} \leq C_A \, \mu(Q).
\end{equation}
Then the functions $b^i_Q$ satisfy \eqref{accretive3} and \eqref{WBP1}.
\end{prop}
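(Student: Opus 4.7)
The plan is to exploit the equivalence between $1/p+1/q\leq 1$ and the two dual inequalities $q'\leq p$ and $p'\leq q$. These are exactly the regimes where H\"older's inequality on a cube $R$ converts the assumed $L^p$ (resp.\ $L^q$) bound on $b^1_Q$ (resp.\ $b^2_Q$) into the $L^{q'}$ (resp.\ $L^{p'}$) bound of the same size, with no loss of a factor in $\mu(R)$. Once this conversion is observed, both conclusions reduce to short applications of the Hardy estimate \eqref{Hardy2}.

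For \eqref{accretive3}, it suffices to control $T(b^1_Q)$ on $\widehat{Q}\setminus Q$, since the bound on $Q$ itself is given by \eqref{accretive3bis}. From $\int_Q |b^1_Q|^p\,d\mu\leq C_A\mu(Q)$ and $q'\leq p$, H\"older yields $\|b^1_Q\|_{L^{q'}(Q)}\lesssim \mu(Q)^{1/q'}$, and applying \eqref{Hardy2} on the cube $Q$ with $\nu=q'$ then gives $\|T(b^1_Q)\|_{L^{q'}(\widehat{Q}\setminus Q)}\lesssim \mu(Q)^{1/q'}$, as needed. The symmetric bound for $T^{\ast}(b^2_Q)$ follows identically, using $p'\leq q$ and \eqref{Hardy2} with $\nu=p'$.

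For \eqref{WBP1}, I would take $\nu=q$ in the conclusion (so $\nu'=q'$). The key observation is the inclusion $\bigcup_n R_n\subset \widehat{R'}\setminus R'$, which lets me replace the possibly arbitrarily numerous cubes $R_n$ by the single cube $R'$ when invoking Hardy; this is what keeps the argument independent of the cardinality of $\{R_n\}_n$ and forces the specific choice $\nu=q$. Concretely, duality bounds the pairing by $\|b^2_Q\sum_n\alpha_n 1_{R_n}\|_{L^q(\bigcup_n R_n)}\cdot \|T(b^1_{Q'} 1_{R'})\|_{L^{q'}(\widehat{R'}\setminus R')}$. The second factor is controlled via \eqref{Hardy2} on $R'$, after converting $[|b^1_{Q'}|^p]_{R'}\leq C$ into $\|b^1_{Q'}\|_{L^{q'}(R')}\lesssim \mu(R')^{1/q'}$ by H\"older (here again using $q'\leq p$). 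The first factor equals $\bigl(\sum_n|\alpha_n|^q\int_{R_n}|b^2_Q|^q\,d\mu\bigr)^{1/q}$ by disjointness of the $R_n$, and the hypothesis $[|b^2_Q|^q]_{R_n}\leq C$ then yields $\lesssim \|\sum_n\alpha_n 1_{R_n}\|_q$. The symmetric version of \eqref{WBP1} is obtained in the same way with $\nu=p$. There is no genuine obstacle: the proof is entirely driven by matching exponents so that Hardy becomes available, the only subtle point being the replacement of the family $(R_n)_n$ by the single cube $R'$ in \eqref{Hardy2}.
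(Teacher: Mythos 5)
Your proposal is correct and follows essentially the same route as the paper: use $1/p+1/q\leq 1$ (equivalently $q'\leq p$, $p'\leq q$) with H\"older to convert the assumed $L^p$ and $L^q$ bounds on $b^1_Q$, $b^2_Q$ into the dual-exponent bounds, then invoke the Hardy estimate \eqref{Hardy2} on $\widehat{Q}\setminus Q$ (resp.\ $\widehat{R'}\setminus R'$), and for \eqref{WBP1} finish with disjointness of the $R_n$ and the pointwise mean bound $[|b^2_Q|^q]_{R_n}\leq C$. The only cosmetic difference is that the paper verifies \eqref{accretive3} by bounding $\int_{\widehat Q\setminus Q}|T(b^1_Q)|^{q'}$ directly via H\"older and \eqref{Hardy2} in one chain, whereas you state it as a two-step conversion; the underlying computation is identical.
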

\begin{proof}
We first prove \eqref{accretive3}. By \eqref{Hardy2}, we have
\begin{align*} 
\int_{\widehat{Q}\backslash Q}{\left ( |T(b^1_Q)|^{q'} + |T^{\ast}(b^2_Q)|^{p'} \right ) d\mu} & \lesssim \int_Q{(|b^1_Q|^{q'} + |b^2_Q|^{p'}  )d\mu}\\
& \lesssim \left ( \int_Q{|b^1_Q|^p d\mu} \right)^{\frac{q'}{p}} \mu(Q)^{1- \frac{q'}{p}} \ \ + \ \ \left ( \int_Q{|b^2_Q|^q d\mu} \right)^{\frac{p'}{q}} \mu(Q)^{1- \frac{p'}{q}}\\
& \lesssim \mu(Q), 
\end{align*}
where we have applied the Hölder inequality to get the second inequality, which is made possible only because $1/p + 1/q \leq 1$ implies that $p/{q'}, q/{p'} \geq 1$. The inequality \eqref{accretive3} follows. Now we prove \eqref{WBP1} with $\nu = q$ (and the symmetrical inequality will hold for $\nu =p$). Write 
\begin{align*}
\left | \left \langle b^2_Q \left ( \sum_n{\alpha_n 1_{R_n}} \right ), T(b^1_{Q'} 1_{R'})  \right \rangle \right | & \lesssim \left \|  b^2_Q \left ( \sum_n{\alpha_n 1_{R_n}}   \right ) \right \|_q \| T(b^1_{Q'} 1_{R'})  \|_{L^{q'}(\widehat{R'} \backslash R')}\\
\end{align*}
By \eqref{Hardy2}, and using again the fact that $1/p +1/q \leq 1$,
\[ \| T(b^1_{Q'} 1_{R'})  \|_{L^{q'}(\widehat{R'}\backslash R')} \lesssim \|b^1_{Q'}  \|_{L^{q'}(R')} \lesssim \mu(R')^{\frac{1}{q'}}. \]
Moreover, because the $R_n$ are disjoint,
\begin{align*}
\left \|  b^2_Q \left ( \sum_n{\alpha_n 1_{R_n}}   \right ) \right \|_q & \lesssim \left (  \sum_n {|\alpha_n |^q \int_{R_n}{|b^2_Q|^q d\mu  }}  \right )^{\frac{1}{q}} \lesssim \left (  \sum_n {|\alpha_n |^q \mu(R_n)}  \right )^{\frac{1}{q}} \lesssim  \left \| \sum_n{\alpha_n 1_{R_n}}  \right  \|_{q}.\\
\end{align*}
\end{proof}
As formulated, the inequality \eqref{WBP2} is not a direct consequence of \eqref{accretive1}, \eqref{accretive2} and \eqref{accretive3bis}. For this, one needs further control for $b^1_{Q'},T(b^1_{Q'}),b^2_Q, T^{\ast}(b^2_Q)$ on $R$ than the one written in \eqref{accretive3bis}. This can be achieved (see Section $8.2$). In other words, when $1/p+1/q \leq 1$, a possible definition of a $(p,q)$ accretive system adapted to $T$ to prove Theorem \ref{AR} is \eqref{accretive1}, \eqref{accretive2} and \eqref{accretive3bis}. In particular, this covers $p=q=2$.

Our argument to prove Theorem \ref{AR} involves using the BCR algorithm introduced in \cite{BCR}, applied with Haar wavelets adapted to the dyadic pseudo-accretive system $(b^i_Q)$, which allows us to obtain a direct proof without having to use the decomposition of a singular integral operator as the sum of a bounded  operator and a perfect dyadic singular integral operator used in \cite{AY} and \cite{TanYan}. We will develop this in the following sections.

Since dyadic cubes can be ugly sets in practice, on which checking $\eqref{accretive3}$, \eqref{WBP1} or \eqref{WBP2} might be difficult, a natural question is whether or not one can switch from dyadic hypotheses in Theorem \ref{AR} to hypotheses made on balls. The following Theorem \ref{ARBalls} deals with this concern, but first let us give one more definition.

\begin{definition}{Hardy property.}\\ \label{HardyBalls}
Let $(X,\rho,\mu)$ be a space of homogeneous type. We say that $X$ has the Hardy property if for every $1<\nu< +\infty$, with dual exponent $\nu'$, there exists $C < +\infty$ such that for every ball $B$ in $X$, with $2B$ denoting the concentric ball with double radius, and all functions $f$ supported on $B$, $f\in L^{\nu}(B)$, $g$ supported on $2B \backslash B$, $g \in L^{\nu'}(2B \backslash B)$, we have
\begin{equation}  \label{3etoile}
\int_{B} \int _{2B\backslash B} {\frac{|f(y)g(x)|}{\lambda(x,y)}d\mu(x)d\mu(y)}  \leq C \|f\|_{\nu}  \|g\|_{\nu'}.
\end{equation}
\end{definition}
Obviously, if $X$ has the Hardy property, \eqref{3etoile} will remain true with $2B$ replaced by $cB$ for any $c>1$, with a different constant $C$. We refer to Section $9.2$ for geometric conditions ensuring that a space of homogeneous type has the Hardy property.
\medskip

\begin{thm}\label{ARBalls}
Let $ 1< p,q < + \infty$ with dual exponents $p', q'$, such that $1/p + 1/q \leq 1$. Let $T$ be a singular integral operator with locally bounded kernel. Assume that there exists a collection of functions $(\{b^1_B\}_B,\{b^2_B\}_B)$, such that there exists a constant $C< +\infty$ such that for every ball $B$ in $X$, $b^i_B$ is supported on $B$, and we have
 \begin{equation}\label{accretive1balls}
 \int_B{b^1_B d\mu} = \mu(B) = \int_B{b^2_B d\mu},
   \end{equation}
   \begin{equation}\label{accretive2balls}
 \int_B{ \left (|b^1_B|^p + |b^2_B|^q \right ) d\mu} \leq C \, \mu(B),
  \end{equation}
   \begin{equation}\label{accretive3balls}
 \int_{X}{\left ( |T(b^1_B)|^{q'} + |T^{\ast}(b^2_B)|^{p'} \right ) d\mu} \leq C \, \mu(B),
  \end{equation}
Then $T$ extends to a bounded operator on $L^2(X)$, with bounds independent of $\| K \|_{\infty,loc}$.\\
Furthermore, if \eqref{accretive3balls} is replaced by the weaker uniform bound
\begin{equation}  \label{accretive3ballsbis}
   \int_{B}{\left ( |T(b^1_B)|^{q'} + |T^{\ast}(b^2_B)|^{p'} \right ) d\mu} \leq C \, \mu(B),
\end{equation}
then the conclusion still holds provided $X$ has the Hardy property.
\end{thm}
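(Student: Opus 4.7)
The plan is to reduce the ball statement to the dyadic Theorem \ref{AR}, invoking the simplified form it takes when $1/p+1/q \le 1$. As explained in the remark following Proposition \ref{particular}, in that regime a $(p,q)$ dyadic pseudo-accretive system adapted to $T$ is produced by the conditions \eqref{accretive1}, \eqref{accretive2} and the weaker local bound \eqref{accretive3bis}, the remaining conditions \eqref{accretive3}, \eqref{WBP1} and \eqref{WBP2} being automatic or handled by the simpler proof given in Section $8.2$. Thus it suffices to build, from the ball system, a dyadic collection $(\{b^1_Q\}_Q,\{b^2_Q\}_Q)$ meeting these three requirements.

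For each dyadic cube $Q$ of generation $j$, I associate the inner ball $B_Q := B(z_Q, a_0\delta^j)$. By property (5) of Lemma \ref{cubes}, $B_Q \subset Q$, and by property (4) combined with the doubling property, $\mu(B_Q) \approx \mu(Q)$ uniformly in $Q$. Define
\[
b^i_Q := \frac{\mu(Q)}{\mu(B_Q)}\, b^i_{B_Q}, \qquad i=1,2,
\]
so that $b^i_Q$ is supported on $B_Q \subset Q$ with $\int_Q b^i_Q\,d\mu = \mu(Q)$, yielding \eqref{accretive1}. Since the normalization factor is uniformly bounded, \eqref{accretive2} is inherited from \eqref{accretive2balls}.

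The crux is \eqref{accretive3bis}, and this is where the two ball hypotheses diverge. Under the strong hypothesis \eqref{accretive3balls}, the scaling relation $T(b^i_Q) = (\mu(Q)/\mu(B_Q))\,T(b^i_{B_Q})$ gives
\[
\int_Q |T(b^1_Q)|^{q'}\,d\mu \lesssim \int_X |T(b^1_{B_Q})|^{q'}\,d\mu \le C\mu(B_Q) \approx C\mu(Q),
\]
and symmetrically for $T^*(b^2_Q)$. Under the weaker hypothesis \eqref{accretive3ballsbis}, $T(b^1_{B_Q})$ is controlled only inside $B_Q$, so I must extend the bound to the larger set $Q$, which sits inside some fixed dilation $c_0 B_Q$ (with $c_0$ depending only on $C_1/a_0$). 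Here the Hardy property intervenes: by duality, for $g \in L^q(2^{i+1}B_Q \setminus 2^i B_Q)$ with $\|g\|_q = 1$, the kernel bound \eqref{standard1} combined with the Hardy inequality \eqref{3etoile} applied to the ball $2^i B_Q$ with $\nu = q'$ gives
\[
|\langle g, T(b^1_{B_Q})\rangle| \le \iint \frac{|b^1_{B_Q}(y)||g(x)|}{\lambda(x,y)}\,d\mu(y)\,d\mu(x) \lesssim \|b^1_{B_Q}\|_{q'}\|g\|_q.
\]
Since $1/p+1/q \le 1$ implies $p \ge q'$, H\"older's inequality together with \eqref{accretive2balls} yields $\|b^1_{B_Q}\|_{q'} \lesssim \mu(B_Q)^{1/q'}$, so $\|T(b^1_{B_Q})\|_{L^{q'}(2^{i+1}B_Q \setminus 2^i B_Q)} \lesssim \mu(B_Q)^{1/q'}$. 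Summing over the uniformly bounded number of dyadic annuli needed to cover $c_0 B_Q$, I obtain $\|T(b^1_{B_Q})\|_{L^{q'}(Q)} \lesssim \mu(Q)^{1/q'}$, and symmetrically for $T^*(b^2_{B_Q})$, which establishes \eqref{accretive3bis}.

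With the simplified dyadic system constructed, Theorem \ref{AR} (in the reduced form valid for $1/p+1/q \le 1$) concludes that $T$ extends boundedly to $L^2(X)$ with the stated control on the constants. The main obstacle in this plan is the weak-hypothesis case: a direct use of the local kernel bound to extend $T(b^1_{B_Q})$ from $B_Q$ to $Q$ would introduce a dependence on $\|K\|_{\infty,\mathrm{loc}}$ that the conclusion forbids, and this is precisely the reason the Hardy property is required in the second part of the statement.
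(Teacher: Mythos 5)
Your proposal is correct and follows the same reduction strategy as the paper for the first part: build a dyadic system from the ball system using the inner balls $B_Q\subset Q$ given by property~(5) of Lemma~\ref{cubes}, verify \eqref{accretive1}, \eqref{accretive2}, \eqref{accretive3bis}, and apply Theorem~\ref{AR} in its reduced form for $1/p+1/q\le 1$ (Section~8.2). The main difference is in the treatment of the second part. The paper proves the stronger implication \eqref{accretive3ballsbis}~$\Rightarrow$~\eqref{accretive3balls}: the Hardy property controls $T(b^1_B)$ on $2B\setminus B$, and Lemma~\ref{lemmesio}, which shows $\|T(f_B)\|^\nu_{L^\nu(X\setminus\alpha B)}\lesssim\mu(B)$ from the kernel size estimate, controls the far range $X\setminus 2B$; the second case of the theorem is then reduced entirely to the first. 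You instead bypass Lemma~\ref{lemmesio} and establish only the local dyadic bound \eqref{accretive3bis}, since $Q\subset c_0 B_Q$ for the fixed $c_0 = C_1/a_0$, using the Hardy property on a uniformly bounded number of annuli $2^{i+1}B_Q\setminus 2^i B_Q$. This is a bit more economical, exploiting that the reduced form of Theorem~\ref{AR} only needs integrability over $Q$, not $\widehat{Q}$ or $X$. (You could note in passing that for $i\ge 1$ the supports of $b^1_{B_Q}$ and $g$ are separated, so the kernel bound \eqref{standard1} alone suffices there; the Hardy property is only genuinely needed for the first annulus $2B_Q\setminus B_Q$.) The observation at the end, that without the Hardy property any attempt to reach $Q$ from $B_Q$ would pull in $\|K\|_{\infty,\mathrm{loc}}$, is precisely the point the paper makes in the first remark following the theorem statement.
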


\medskip
\begin{remark}
\begin{enumerate}
\item In \cite{TanYan}, the authors state Theorem \ref{ARBalls}, with hypothesis \eqref{accretive3ballsbis}, but they do not assume $X$ has the Hardy property. They justify their statement by reducing to a dyadic pseudo-accretive system as in our proof of Theorem \ref{ARBalls} (see the following section). It might have been a bit over-optimistic, as we do not see how to obtain \eqref{accretive3} from \eqref{accretive3ballsbis} without using a Hardy type inequality, and the Hardy property \eqref{3etoile} might not always be satisfied in a general space of homogeneous type.
\item We had to assume that $1/p +1/q \leq1$. Indeed, when $1/p+1/q>1$, the incompatibility of exponents $p,q$ makes things tricky, and we cannot see a way of adapting \eqref{WBP1} and \eqref{WBP2} to the balls setting. 
\item We will prove in the next section that integrability over $X$ is equivalent to integrability over $CB$ for any $C>1$, and it obviously implies integrability over $B$. Conversely though, integrability over $B$ does not necessarily imply integrability over $X$. It does when a Hardy type inequality on balls is satisfied in the space $X$. Some particular relation between $T$ and the $b^i_B$ could also be a substitute.
\end{enumerate}
\end{remark}

\medskip

Finally, a natural question is whether one can relax the support condition on the accretive system and impose that the $b^i_Q$ are only supported in a slight enlargment, say $\widehat{Q}$, of $Q$. We answer this question with following Theorem \ref{ARjut}\footnote{We thank T. Hytönen for his suggestion which led us to the formulation of this theorem.}. 

\begin{thm}\label{ARjut}
Let $1< p,q < + \infty$ with dual exponents $p', q'$. Let $T$ be a singular integral operator with locally bounded kernel. Assume that there exists a collection of functions $(\{b^1_Q\}_Q,\{b^2_Q\}_Q)$, and a constant $C< +\infty$ such that for every dyadic cube $Q$ in $X$, $b^i_Q$ is supported on $\widehat{Q}$, and we have
 \begin{equation}\label{accretive1jut}
 \int_{\widehat{Q}}{b^1_Q d\mu} = \mu(Q) = \int_{\widehat{Q}}{b^2_Q d\mu},
   \end{equation}
   \begin{equation}\label{accretive2jut}
 \int_{\widehat{Q}}{ \left (|b^1_Q|^p + |b^2_Q|^q \right ) d\mu} \leq C \, \mu(Q),
  \end{equation}
   \begin{equation}\label{accretive3jut}
 \int_{X}{\left ( |T(b^1_Q)|^{q'} + |T^{\ast}(b^2_Q)|^{p'} \right ) d\mu} \leq C \, \mu(Q).
  \end{equation}
Suppose as well that for all $C<+\infty$, there exists $C_H <+\infty$ and $1<\nu<+\infty$ such that for every $(Q,Q')$ dyadic cubes, for every dyadic cubes $R' \subset \widehat{Q'}$, $R_n \subset (\widehat{R'}\backslash R') \cap \widehat{Q}$, $R_n$ mutually disjoint, with $\rho(R',R_n) < l(R_n)$, $[|b^1_{Q'}|^p]_{R'} \leq C$, $[|b^2_{Q}|^q ]_{R_n} \leq C$, and for every set of coefficients $(\alpha_{n})_n$, we have
\begin{equation}\label{WBP1jut}
\left | \left \langle b^2_Q \left ( \sum_n{\alpha_n 1_{R_n}} \right ), T(b^1_{Q'} 1_{R'})  \right \rangle \right | \leq C_H \left \| \sum_n{\alpha_n 1_{R_n}}  \right  \|_{\nu} \mu(R')^{\frac{1}{\nu'}}. 
\end{equation}
Suppose also that for all $C<+\infty$, there exists $C_{WBP} <+\infty$ such that for every $(Q,Q')$ dyadic cubes, for every dyadic cube $R \subset \widehat{Q} \cap \widehat{Q'}$ with $[|b^1_{Q'}|^p]_{R} \leq C$, $[|b^2_{Q}|^q ]_{R} \leq C$, we have
\begin{equation}\label{WBP2jut}
\left | \left \langle b^2_Q 1_{R}  , T(b^1_{Q'} 1_{R})  \right \rangle \right | \leq C_{WBP} \, \mu(R).
\end{equation}
We naturally require the $b^i_Q$ to satisfy the symmetric properties, with respectively $b^1$ instead of $b^2$, $p$ instead of $q$, $q$ instead of $p$, and $T^{\ast}$ instead of $T$.

\noindent Then $T$ extends to a bounded operator on $L^2(X)$, with bounds independent of $\| K \|_{\infty,loc}$.\\

\end{thm}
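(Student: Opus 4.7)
The plan is to reduce Theorem \ref{ARjut} to Theorem \ref{AR} by constructing, from the given ``fat-supported'' system $(b^1_Q, b^2_Q)$ (where $\supp b^i_Q \subset \widehat{Q}$), a new system $(\tilde b^1_Q, \tilde b^2_Q)$ strictly supported in the cubes $Q$ that satisfies the hypotheses of Theorem \ref{AR}. The natural ansatz is, for each dyadic cube $Q$,
\[
\tilde b^1_Q \;=\; b^1_Q \, 1_Q \;+\; \alpha_Q \, 1_Q, \qquad \alpha_Q \;=\; \frac{1}{\mu(Q)} \int_{\widehat{Q} \setminus Q} b^1_Q \, d\mu,
\]
and $\tilde b^2_Q$ defined symmetrically. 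The mean-value condition \eqref{accretive1} follows immediately from \eqref{accretive1jut}, and H\"older applied to \eqref{accretive2jut} gives $|\alpha_Q| \lesssim 1$, hence \eqref{accretive2} via the triangle inequality.

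To verify \eqref{accretive3}, I would decompose
\[
T\tilde b^1_Q \;=\; T(b^1_Q) \;-\; \sum_{Q' \in N(Q)} T(b^1_Q \, 1_{Q'}) \;+\; \alpha_Q \, T(1_Q).
\]
The first term is controlled in $L^{q'}(X)$ directly by \eqref{accretive3jut}, which is strong enough to give the required $L^{q'}(\widehat{Q})$ bound. The remaining terms are where \eqref{WBP1jut} and \eqref{WBP2jut} earn their keep: I would bound them in $L^{q'}(\widehat{Q})$ by duality, testing against an arbitrary $g \in L^{q}(\widehat{Q})$, and performing a Calder\'on--Zygmund-type stopping time decomposition of $g$ on dyadic subcubes of $\widehat{Q}$ according to the thresholds $[|b^1_{Q'}|^p]_R \leq C$ and $[|b^2_Q|^q]_R \leq C$. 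The resulting pieces then match exactly the format assumed in \eqref{WBP1jut}, \eqref{WBP2jut}, so those hypotheses provide the required duality estimate. Finally, the WBP conditions \eqref{WBP1} and \eqref{WBP2} for the new system $(\tilde b^1_Q, \tilde b^2_Q)$ follow from \eqref{WBP1jut}, \eqref{WBP2jut} because the geometric inclusions $R' \subset Q'$, $R_n \subset \widehat{R'} \setminus R' \cap Q$, and $R \subset Q \cap Q'$ are particular cases of $R' \subset \widehat{Q'}$, $R_n \subset \widehat{R'} \setminus R' \cap \widehat{Q}$, and $R \subset \widehat{Q} \cap \widehat{Q'}$; the added constant term $\alpha_Q 1_Q$ contributes a diagonal piece handled by \eqref{WBP2jut}.

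The main obstacle will be controlling $T(1_Q)$ and the cross-terms $T(b^1_Q 1_{Q'})$ in $L^{q'}(\widehat{Q})$ \emph{independently} of $\|K\|_{\infty,loc}$: neither $1_Q$ nor $b^1_Q 1_{Q'}$ has mean zero or smoothness to exploit directly through the Calder\'on--Zygmund estimate \eqref{standard estimate dyadic}. Everything must come through the weak boundedness hypotheses \eqref{WBP1jut}--\eqref{WBP2jut}, so the delicate point is to show that the stopping-time decomposition of an arbitrary test function can always be arranged so that every piece falls within the reach of these estimates (in particular, matching the exponent $\nu$ coming from \eqref{WBP1jut}, rather than the ambient $p,q$). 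Once this is achieved, Theorem \ref{AR} applied to $(\tilde b^1_Q, \tilde b^2_Q)$ concludes the argument.
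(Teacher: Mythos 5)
Your overall strategy — reduce to Theorem \ref{AR} by manufacturing a new pseudo-accretive system with tight support — is the right one and matches the paper's intent. But your specific construction has a gap that I do not think is patchable within the stated hypotheses.

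The ansatz $\tilde b^1_Q = b^1_Q 1_Q + \alpha_Q 1_Q$ produces, upon applying $T$, the term $\alpha_Q\, T(1_Q)$, and you need $\|T(1_Q)\|_{L^{q'}(\widehat{Q})} \lesssim \mu(Q)^{1/q'}$ uniformly in $Q$ and independently of $\|K\|_{\infty,loc}$. None of the hypotheses \eqref{accretive3jut}, \eqref{WBP1jut}, \eqref{WBP2jut} ever puts a raw indicator function inside $T$ (or $T^\ast$): they always pair $b^2_Q(\cdot)$ against $T(b^1_{Q'}(\cdot))$. Your proposed duality argument — decompose a test function $g \in L^q(\widehat Q)$ by a stopping time and match each piece to the format of \eqref{WBP1jut}, \eqref{WBP2jut} — decomposes $g$ on the \emph{outside} of the pairing, but $1_Q$ sits on the \emph{inside}, under $T$, and no stopping-time surgery on $g$ will replace $1_Q$ by a $b^1$-function. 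The analogous problem recurs when you try to verify \eqref{WBP2} for the new system: expanding $\langle \tilde b^2_Q 1_R, T(\tilde b^1_{Q'} 1_R)\rangle$ produces cross terms like $\langle 1_R, T(1_R)\rangle$, which is precisely the quantity a local $Tb$ theorem is supposed to \emph{produce}, not \emph{assume}. Without an a priori bound on $T$, these terms can only be controlled via $\|K\|_{\infty,loc}$, which is forbidden. Trying instead to truncate and renormalize, $\tilde b^1_Q = b^1_Q 1_Q / [b^1_Q]_Q$, fails for a different reason: \eqref{accretive1jut} normalizes the mean over $\widehat Q$, and nothing prevents $[b^1_Q]_Q$ from being arbitrarily small.

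The paper's construction sidesteps this entirely: for each $Q$ one picks a dyadic subcube $Q^k \subset Q$ of a fixed much smaller generation, chosen (via the small boundary condition $(6)$ of Lemma \ref{cubes}) far enough from $\partial Q$ that $\widehat{Q^k} \subset Q$, and sets $\beta^i_Q = \lambda_Q b^i_{Q^k}$ with a normalizing constant $\lambda_Q = \mu(Q)/\mu(Q^k) \lesssim 1$. The crucial point is that $\beta^i_Q$ is just a rescaled member of the \emph{original} family — no indicator functions are introduced — so \eqref{accretive3jut} (whose integral is over all of $X$, which is exactly why one needs that stronger form) and the weak boundedness hypotheses apply directly, and the verifications of \eqref{accretive1}--\eqref{WBP2} are essentially bookkeeping about supports and lengths, plus standard Calder\'on--Zygmund estimates \eqref{standard estimate dyadic} and the dyadic Hardy inequality \eqref{Hardy1} to handle boundary effects. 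I recommend you replace your additive correction by this shrinking-cube device; the rest of your outline (verifying \eqref{accretive1}, \eqref{accretive2} via H\"older and \eqref{accretive2jut}) then goes through as you describe.
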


\begin{remark}
As for Theorem \ref{AR}, when $1/p +1/q \leq 1$, we have a simpler formulation because one needs not \eqref{WBP1jut} and \eqref{WBP2jut} and the result holds only assuming \eqref{accretive1jut}, \eqref{accretive2jut} and \eqref{accretive3jut}.
\end{remark}
\medskip

\section{Proofs of Theorem \ref{ARBalls} and Theorem \ref{ARjut}}

\medskip

\begin{proof}[Proof of Theorem \ref{ARBalls}]
Theorem \ref{ARBalls} is a direct consequence of Theorem \ref{AR} in the particular case when $1/p+1/q \leq 1$. In this case, one needs not check hypotheses \eqref{WBP1} and \eqref{WBP2} as we remarked earlier (see Section $8.2$ for the detail). We begin by proving the first part of Theorem \ref{ARBalls}. Assuming there exists a pseudo-accretive system $(\{b^1_B\},\{b^2_B\})$ on the balls $B$ of $X$ satisfying \eqref{accretive1balls}, \eqref{accretive2balls} and \eqref{accretive3balls}, for $i=1,2$, and $Q$ a dyadic cube, let us consider the functions $b^i_{B_Q}$ where $B_Q$ is a ball contained in $Q$ of radius comparable to the diameter of $Q$. The existence of such a ball is given by property $(5)$ of Lemma \ref{cubes}. Then, normalizing the $b^i_{B_Q}$ so that they have mean $1$ on $Q$, we obtain a collection of functions $b^i_Q$, supported on $Q$, and that satisfy
\[ [b^i_Q  ]_Q =1, \]
\[ \int_Q{(|b^1_Q|^p +|b^2_Q|^q)d\mu} \lesssim \int_B{(|b^1_{B_Q}|^p +|b^2_{B_Q}|^q)d\mu} \lesssim \mu(Q), \]
\[  \int_{\widehat{Q}}{\left ( |T(b^1_Q)|^{q'} + |T^{\ast}(b^2_Q)|^{p'} \right ) d\mu} \lesssim \int_X{\left ( |T(b^1_{B_Q})|^{q'} + |T^{\ast}(b^2_{B_Q})|^{p'} \right ) d\mu} \lesssim \mu(Q). \]
Thus, applying Theorem \ref{AR} in the particular case when $1/p + 1/q \leq 1$, we obtain the boundedness of $T$ on $L^2(X)$. For the second part of Theorem \ref{ARBalls}, we prove that if $X$ has the Hardy property then \eqref{accretive3ballsbis} implies \eqref{accretive3balls}. Observe first that as a consequence of \eqref{3etoile} and of the fact that $1/p+1/q \leq1$, we have
\[ \int_{2B\backslash B} {\left ( |T(b^1_B)|^{q'} + |T^{\ast}(b^2_B)|^{p'} \right ) d\mu} \lesssim \mu(B).\]
The result is then a direct application of the following lemma.
\begin{lem} \label{lemmesio}
Let $T$ be a singular integral operator on $X$. Let $\alpha >1$, let $B$ be a ball in $X$, and $f_B$ a function supported on $B$ such that $\|f_B\|_{L^1(B)} \lesssim \mu(B)$. Then $T(f_B) \in L^{\nu}(X\backslash \alpha B)$ and $\|T(f_B)\|^{\nu}_{L^{\nu}(X \backslash \alpha B)} \lesssim \mu(B)$ for all $1 < \nu < +\infty$.
\end{lem}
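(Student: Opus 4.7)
The strategy is the standard off-diagonal estimate: since $x$ lies outside $\alpha B$ with $\alpha>1$ and $y$ lies in $B$, the triangle inequality gives $\rho(x,y)\approx\rho(x,x_B)$ (with constants depending on $\alpha$), where $x_B$ is the center of $B$ and $r$ its radius. The size estimate \eqref{standard1} and the doubling property then yield $|K(x,y)|\lesssim 1/\lambda(x,x_B)$ uniformly in $y\in B$, so that by \eqref{etoile},
\[
|T(f_B)(x)|\;\leq\;\int_B |K(x,y)||f_B(y)|\,d\mu(y)\;\lesssim\;\frac{\|f_B\|_{L^1(B)}}{\lambda(x,x_B)}\;\lesssim\;\frac{\mu(B)}{\lambda(x,x_B)}
\]
for almost every $x\in X\setminus\alpha B$. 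Raising to the $\nu$-th power, it suffices to prove
\[
J\;:=\;\int_{X\setminus\alpha B}\frac{d\mu(x)}{\lambda(x,x_B)^\nu}\;\lesssim\;\mu(B)^{1-\nu}.
\]

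I would estimate $J$ by decomposing into dyadic annuli $A_k=2^{k+1}\alpha B\setminus 2^k\alpha B$ for $k\geq 0$. Setting $V_k:=\mu(2^k\alpha B)$, doubling gives $V_{k+1}\leq C V_k$, and on $A_k$ one has $\lambda(x,x_B)\approx V_k$ (again by doubling, comparing $B(x,\rho(x,x_B))$ with $B(x_B,\rho(x,x_B))$). Since $\mu(A_k)=V_{k+1}-V_k$, this yields
\[
J\;\lesssim\;\sum_{k\geq 0}V_k^{-\nu}(V_{k+1}-V_k).
\]
Comparing the $k$-th term to $\int_{V_k}^{V_{k+1}}V^{-\nu}\,dV$ via $V_k^{-\nu}\leq (V_{k+1}/V_k)^\nu V_{k+1}^{-\nu}\lesssim V_{k+1}^{-\nu}$ (uniformly in $k$ by doubling), a telescoping gives
\[
J\;\lesssim\;\int_{V_0}^{\infty}V^{-\nu}\,dV\;=\;\frac{V_0^{1-\nu}}{\nu-1}\;\approx\;\mu(B)^{1-\nu},
\]
where we used $\nu>1$ and $V_0=\mu(\alpha B)\approx\mu(B)$ by doubling.

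Combining the two displays,
\[
\|T(f_B)\|_{L^\nu(X\setminus\alpha B)}^{\nu}\;\lesssim\;\mu(B)^{\nu}\cdot\mu(B)^{1-\nu}\;=\;\mu(B),
\]
which is the desired conclusion. The constants depend only on $C_D$, $\alpha$, $\nu$, and the implicit constants in \eqref{standard1}. The only mildly delicate point is the telescoping step; it has to accommodate the possibility that $X$ has finite diameter (in which case $V_k$ stabilizes at $\mu(X)$ and only finitely many annuli contribute, but the integral bound still applies since the sum is then finite and bounded by the same quantity). No regularity of the kernel is used, only the size estimate, which is why the statement holds for every $\nu>1$.
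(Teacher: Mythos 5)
Your proposal is correct and follows essentially the same route as the paper: the pointwise bound $|T(f_B)(x)|\lesssim \mu(B)/\lambda(x,x_B)$ from \eqref{etoile} and \eqref{standard1}, followed by a decomposition of $X\setminus\alpha B$ into dyadic annuli and an estimate of $\sum_k V_k^{-\nu}(V_{k+1}-V_k)$. Your comparison of each term with $\int_{V_k}^{V_{k+1}}V^{-\nu}\,dV$ is just a slight repackaging of the paper's step of bundling the annuli with comparable $\mu(B_j)$ and telescoping, so there is nothing substantive to add.
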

Admitting Lemma \ref{lemmesio}, one only has to apply it to the functions $b^i_B$, with $\alpha=2$, and $\nu_1 = q', \nu_2 = p'$, to obtain
\[ \int_{X\backslash 2B} {\left ( |T(b^1_B)|^{q'} + |T^{\ast}(b^2_B)|^{p'} \right ) d\mu} \lesssim \mu(B). \]
Summing up, we obtain \eqref{accretive3balls} as desired. 
\end{proof}

\begin{proof}[Proof of Lemma \ref{lemmesio}]
For $x \in X \backslash 2B$, applying \eqref{etoile} and \eqref{standard1}, write
\[ |T(f_B) (x)| \lesssim \int_{B}{ \frac{|f_B(y)|}{\lambda(x,y)} d\mu(y)} \lesssim \frac{\mu(B)}{\mu(B(x,\rho(x,B)))}. \]
Now, for $j \geq 0$, set $B_j = 2^{j+1} \alpha B$ and $C_j = B_j \backslash B_{j-1}$, and split the integral over $X \backslash \alpha B$ as
\[ \int_{X \backslash \alpha B}{ |T(f_B) |^{\nu} d\mu}  \lesssim \mu(B)^{\nu} \sum_{j \geq 0}\frac{ \mu(C_j)}{{\mu(B_j)}^{\nu}}. \]
Splitting this sum into bundles for which $\mu(B_j)$ is comparable, we have
\begin{align*}
\sum_{j \geq 0}\frac{ \mu(C_j)}{{\mu(B_j)}^{\nu}} & = \sum_{k\geq 0} \quad \sum_{j:  2^k \mu(B) \leq \mu(B_j) < 2^{k+1} \mu(B)}{\frac{\mu(B_j) - \mu(B_{j-1})}{{\mu(B_j)}^{\nu}}}\\
& \leq  \sum_{k\geq 0} \frac{1}{(2^k \mu(B))^{\nu}}  \sum_{j:  2^k \mu(B) \leq \mu(B_j) < 2^{k+1} \mu(B)}{(\mu(B_j) - \mu(B_{j-1}))}\\
& \leq \sum_{k \geq 0} {\frac{2^{k+1}\mu(B)}{(2^k \mu(B))^{\nu}}} \lesssim \mu(B)^{1 -\nu}.\\
\end{align*}
Finally, we obtain as desired
\[ \int_{X \backslash \alpha B}{ |T(f_B) |^{\nu} d\mu} \lesssim \mu(B),  \]
and Lemma \ref{lemmesio} follows.
\end{proof}


\medskip

\begin{proof}[Proof of Theorem \ref{ARjut}]
The proof uses the same idea as in the previous argument and can be skipped in a first reading of the paper. We define new systems from the given ones and check the hypotheses of Theorem \ref{AR}. These verifications can be long and technical to write but not difficult. They only use basic Calder\'on-Zygmund estimates for singular integrals \eqref{standard estimate dyadic} and Hardy inequalities on dyadic cubes \eqref{Hardy1}. Details are as follows.\\ 

Let $0<C<+\infty$, and suppose that there exists a dyadic pseudo-accretive system $(\{b^1_Q\},\{b^2_Q\})$ on $X$ satisfying \eqref{accretive1jut}, \eqref{accretive2jut}, \eqref{accretive3jut}, \eqref{WBP2jut} and \eqref{WBP1jut} for some $1 < \sigma < +\infty$. For $i=1,2$, and $Q$ a dyadic cube, set $\beta^i_Q = \lambda_Q b^i_{Q^k}$,  where $Q^k$ is a dyadic subcube of $Q$ such that $l(Q^k) = \delta^k l(Q)$, $\rho(Q^k,Q^c) \geq (1+C_1) l(Q^k)$ where $C_1$ is the constant intervening in property $(4)$ of Lemma \ref{cubes} (that is, $Q^k$ is far enough from the border of $Q$), and $\lambda_Q$ is such that $[\beta^i_Q]_Q = 1$ for every cube $Q$. That $Q^k$ can be so chosen follows from the small boundary condition $(6)$ of Lemma \ref{cubes} because such cubes occupy a set of measure larger than $\frac{1}{2} \mu(Q)$ if $k$ is taken large enough (independently of $Q$). We prove that $\{\beta^1_Q \}_Q, \{\beta^2_Q \}_Q$ form a $(p,q)$ dyadic pseudo-accretive system adapted to $T$. Remark first that because $Q^k$ is taken sufficiently far from the boundary of $Q$, the $\beta^i_Q$ are always supported inside $Q$, so that \eqref{accretive1} is satisfied. Next, observe that the $(\lambda_Q)_Q$ form a set of uniformly bounded coefficients : as a consequence of \eqref{accretive1jut}, we have for every dyadic cube $Q$, 
\[ \lambda_Q = \frac{ \mu(Q)}{\mu(Q^k)} \lesssim 1, \] 
by the doubling property. We have by \eqref{accretive2jut}
\[ \int_Q{(|{\beta}^1_Q|^p + |{\beta}^2_Q|^q)d\mu} \lesssim \int_{\widehat{Q^k}}{(|b^1_{Q^k}|^p + |b^2_{Q^k}|^q)d\mu} \lesssim \mu(Q^k) \lesssim \mu(Q),  \]
so that \eqref{accretive2} is satisfied. Similarly, for \eqref{accretive3}, we have by \eqref{accretive3jut} 
\[ \int_{\widehat{Q}}{(|T(\beta^1_Q)|^{q'} +|T^{\ast}(\beta^2_Q)|^{p'})d\mu} \lesssim \int_X{(|T(b^1_{Q^k})|^{q'}+|T^{\ast}(b^2_{Q^k})|^{p'})d\mu} \lesssim \mu(Q^k) \lesssim \mu(Q). \]
Let us now prove \eqref{WBP2}. Let $Q_1, Q_2$ be two dyadic cubes and let $R \subset Q_1 \cap Q_2$ be such that $[|\beta^1_{Q_1}|^p]_{R} \leq C$, $[|\beta^2_{Q_2}|^q ]_{R} \leq C$. First, if $R \subset \widehat{Q_1^k} \cap \widehat{Q_2^k}$, \eqref{WBP2} is a direct consequence of \eqref{WBP2jut}. \\
\noindent Suppose now that for example $R \subset \widehat{Q^k_2}$, $R \not\subset \widehat{Q^k_1}$. Write
\[ \langle \beta^2_{Q_2} 1_R, T(\beta^1_{Q_1} 1_R) \rangle = \langle b^2_{Q_2^k} 1_R, T(b^1_{Q_1^k} 1_{R \cap \widehat{Q_1^k}}) \rangle, \]
because $b^1_{Q_1^k}$ is supported on $\widehat{Q^k_1}$. If $R \cap \widehat{Q^k_1} \neq \varnothing$, then we must have $l(R) > l(Q^k_1)$. 
Remark that since $l(R) \leq l(Q_1)$ this implies that $R$ and $Q^k_1$ have comparable measure. Write
\[ R \cap \widehat{Q^k_1} = \bigcup_j{Q^k_{1,j}}, \]
where the $Q^k_{1,j}$ are dyadic neighbors of $Q^k_1$, and each one of them is strictly contained inside $R$. Remark that there is a finite number (uniformly bounded) of such cubes, and write
\begin{equation}  \label{proofjut}
 \langle b^2_{Q_2^k} 1_R, T(b^1_{Q_1^k} 1_{Q_{1,j}^k}) \rangle   =   \langle b^1_{Q_1^k} 1_{Q^k_{1,j}}, T^{\ast}(b^2_{Q_2^k} 1_R)  \rangle
= \Sigma_1 + \Sigma_2 + \Sigma_3.
\end{equation} 
where we have applied the decomposition $1_R = 1_{Q^k_{1,j}} + 1_{R \cap (\widehat{Q^k_{1,j}}\backslash Q^k_{1,j})} + 1_{R \backslash \widehat{Q^k_{1,j}}}$.
Applying \eqref{WBP2jut} on $Q^k_{1,j} \subset \widehat{Q^k_2} \cap \widehat{Q^k_1}$, one gets $|\Sigma_1| \lesssim \mu(Q^k_{1,j})$. Decompose further $\Sigma_2$ in a sum on dyadic neighbors of $Q^k_{1,j}$ (they have their measure comparable to that of $Q^k_{1,j}$), and apply \eqref{WBP1jut} in the particular case when there is only one cube $R_n$ to each one of those terms to get $|\Sigma_2| \lesssim \mu(Q^k_{1,j})^{\frac{1}{\sigma}} \mu(Q^k_{1,j})^{\frac{1}{\sigma'}}\lesssim \mu(Q^k_{1,j}).$ To bound the last term, it remains to apply the standard \mbox{estimates :} write 
\[ b^1_{Q^k_1} 1_{Q^k_{1,j}} = \left ( b^1_{Q^k_1} 1_{Q^k_{1,j}} - [b^1_{Q^k_1}]_{Q^k_{1,j}} 1_{Q^k_{1,j}} \right ) + [b^1_{Q^k_1}]_{Q^k_{1,j}} 1_{Q^k_{1,j}} = f + g,  \]
where both functions are supported on $Q^k_{1,j}$ and $f$ has mean zero. Remark that by \eqref{accretive2jut}
\[ | [b^1_{Q^k_1}]_{Q^k_{1,j}} | \lesssim \frac{1}{\mu(Q^k_1)} \int_{\widehat{Q^k_1}}{|b^1_{Q^k_1}|d\mu} \lesssim \frac{\mu(\widehat{Q^k_1})^{\frac{1}{p'}}}{\mu(Q^k_1)} \left (\int_{\widehat{Q^k_1}}{|b^1_{Q^k_1}|^p d\mu} \right )^{\frac{1}{p}} \lesssim C^{\frac{1}{p}}. \]
Thus $\int_{Q^k_{1,j}}{|f|d\mu} \lesssim \mu(Q^k_{1,j})$. Remark also that $| [b^2_{Q^k_2}]_R|  \lesssim C^{\frac{1}{q}}$ because of the size estimate of $\beta^2_{Q_2}$ on $R$. For $x \in Q^k_{1,j}$ and $y \in R \backslash \widehat{Q^k_{1,j}}$, we have $\lambda(x,y) \gtrsim \mu(Q^k_1) \gtrsim \mu(R)$ because as we have already remarked these two cubes have comparable measure. By \eqref{standard estimate dyadic}, we thus have
\[| \langle b^2_{Q^k_2} 1_{R \backslash \widehat{Q^k_{1,j}}}  , T f  \rangle | \lesssim \int_{Q^k_{1,j}}{|f|d\mu} \int_{R}{|b^2_{Q^k_2} 1_{R \backslash \widehat{Q^k_{1,j}}}|d\mu}  \left ( \frac{l(Q^k_1) }{\rho(Q^k_{1,j}, R \backslash \widehat{Q^k_{1,j}})}\right )^{\alpha} \frac{1}{\mu(R)} \lesssim \mu(Q^k_{1,j}). \]
For the second term, we have 
\[| \langle b^2_{Q^k_2} 1_{R \backslash \widehat{Q^k_{1,j}}}  , T g  \rangle | \lesssim | \langle b^2_{Q^k_2} 1_{R \backslash \widehat{Q^k_{1,j}}} , T(1_{Q^k_{1,j}})  \rangle | \lesssim \mu(R)^{\frac{1}{q}} \mu(Q^k_{1,j})^{\frac{1}{q'}}  \lesssim \mu(R), \]
by the Hardy inequality \eqref{Hardy1}. Thus $|\Sigma_3| \lesssim \mu(R)$. As we have a uniformly bounded number of $j$, we have shown in this case 
\[  | \langle \beta^2_{Q_2} 1_R, T(\beta^1_{Q_1} 1_R) \rangle| \lesssim \mu(R).  \]
Suppose now that $R \not\subset \widehat{Q^k_1}$, $R \not\subset \widehat{Q^k_2}$. Write again 
\[ \langle \beta^2_{Q_2} 1_R, T(\beta^1_{Q_1} 1_R) \rangle = \langle b^2_{Q_2^k} 1_{R \cap \widehat{Q_2^k}}, T(b^1_{Q_1^k} 1_{R \cap \widehat{Q_1^k}}) \rangle. \]
For this to be different from zero, we must have $l(R) > \max(l(Q^k_1),l(Q^k_2))$. Write then as before
\[ R \cap \widehat{Q^k_1} = \bigcup_j{Q^k_{1,j}}, \quad \quad R \cap \widehat{Q^k_2} = \bigcup_i{Q^k_{2,i}},   \]
with $Q^k_{1,j}$ (resp $Q^k_{2,i}$) dyadic neighbors of $Q^k_1$ (resp $Q^k_2$), and decompose
\[ \langle b^2_{Q_2^k} 1_{R \cap \widehat{Q_2^k}}, T(b^1_{Q_1^k} 1_{R \cap \widehat{Q_1^k}}) \rangle  = \sum_{i,j}  \langle b^2_{Q_2^k} 1_{Q^k_{2,i}}, T(b^1_{Q_1^k} 1_{Q_{1,j}^k}) \rangle. \]
Fix $i$ and $j$. If $Q^k_{1,j} \cap Q^k_{2,i} \neq \varnothing$, then we estimate the above term by decomposing it as in \eqref{proofjut}. If $Q^k_{1,j} \cap Q^k_{2,i} = \varnothing$ and $\rho(Q^k_{1,j}, Q^k_{2,i}) < \min(l(Q^k_{1}),l(Q^k_2))$, then apply \eqref{WBP1jut} in the particular case when there is only one cube $R_n$ to get the expected bound. Finally, if $Q^k_{1,j} \cap Q^k_{2,i} = \varnothing$ and $\rho(Q^k_{1,j}, Q^k_{2,i}) \geq  \min(l(Q^k_{1}),l(Q^k_2))$, an standard computation using \eqref{standard estimate dyadic} also gives the expected bound, that is
\[  | \langle \beta^2_{Q_2} 1_R, T(\beta^1_{Q_1} 1_R) \rangle| \lesssim \mu(R),  \]
and \eqref{WBP2} is proved.

\medskip

\noindent We now prove \eqref{WBP1}. Let $Q_1, Q_2$ be two dyadic cubes, $N$ a set of integers, and $R' \subset Q_1$, for every $n \in N$, $R_n \subset (\widehat{R'}\backslash R') \cap Q_2$, $R_n$ mutually disjoint, with $\rho(R',R_n) < l(R_n)$, $[|\beta^1_{Q_1}|^p]_{R'} \leq C$, $[|\beta^2_{Q_2}|^q ]_{R_n} \leq C$, and let $(\alpha_{n})_{n \in N}$ be a set of coefficients. We distinguish two cases. Suppose first that $R' \subset \widehat{Q^k_1}$. If for all $n \in N$, $R_n \subset \widehat{Q^k_2}$, then \eqref{WBP1} is a direct consequence of \eqref{WBP1jut}. Else, for the dyadic cubes $R_n$ which are not contained inside $\widehat{Q^k_2}$ but intersect $\widehat{Q^k_2}$ (and thus satisfy $l(R_n) > l(Q^k_2)$), write as before $R_n \cap \widehat{Q^k_2} = \cup_i{R_{n,i}}$, where the $R_{n,i}$ are dyadic neighbors of $Q^k_2$. Denote by $N_0$ the set of integers $n$ such that $R_n$ satisfy the previous property. Remark that $N_0$ is a finite set, and that its cardinal is uniformly bounded : indeed, for $n \in N_0$, the dyadic cubes $R_n$ satisfy $l(Q^k_2) < l(R_n) \leq l(Q_2) $ and they are disjoint with respect to one another. Remark also that we have for every $n \in N_0$ and every $i$, $[|b^2_{Q_2}|^q ]_{R_{n,i}} \lesssim C$. As a matter of fact, since $R_{n,i}$ is a dyadic neighbor of $Q^k_2$, its measure is comparable to the measure of $Q^k_2$, so that since $R_n \subset Q_2$, we have $\frac{\mu(R_n)}{\mu(R_{n,i})} \lesssim \frac{\mu(Q_2)}{\mu(Q^k_2)} \lesssim 1$ and
\[[|b^2_{Q_2}|^q ]_{R_{n,i}} \leq [|b^2_{Q_2}|^q ]_{R_n} \times \frac{\mu(R_n)}{\mu(R_{n,i})} \lesssim C. \]
For every $n \in N_0$, and every $i$ such that $\rho(R_{n,i}, R') \geq l(R_{n,i})$, apply as before the standard Calder\'on-Zygmund estimates \eqref{standard estimate dyadic} and the Hardy inequality \eqref{Hardy1} to get
\begin{align*} 
| \langle b^2_{Q^k_2} 1_{R_{n,i}} , T(b^1_{Q^k_1} 1_{R'})  \rangle | & \leq  |\langle T^{\ast} (b^2_{Q^k_2} 1_{R_{n,i}}\!\! - \!\! [b^2_{Q^k_2}]_{R_{n,i}} 1_{R_{n,i}}  ) , b^1_{Q^k_1} 1_{R'} \rangle | +  |[b^2_{Q^k_2}]_{R_{n,i}}|\!\! | \langle  T^{\ast}( 1_{R_{n,i}}) , b^1_{Q^k_1} 1_{R'} \rangle|  \\
& \lesssim \mu(R_{n,i})l(R_{n,i})^{\alpha} \int_{R'}{\frac{|b^1_{Q^k_1}(y) |}{\rho(y,z_{n,i})^{\alpha}}  \frac{d\mu(y)}{\mu(B(z_{n,i},\rho(y,z_{n,i}) ))}}    +  \mu(R_{n,i})^{\frac{1}{p'}} \mu(R')^{\frac{1}{p}}, \\
\end{align*}
where $z_{n,i}$ is the center of the dyadic cube $R_{n,i}$.
To estimate this integral, split it onto coronae for which $2^l l(R_{n,i}) \leq \rho(y, z_{n,i}) < 2^{l+1}l(R_{n,i})$ : let $B_l =B(z_{n,i}, 2^{l+1} l(R_{n,i}))$ and applying Hölder's inequality, write
\begin{align*}
\int_{R'}{\frac{|b^1_{Q^k_1}(y) |}{\rho(z_{n,i},y)^{\alpha}}  \frac{d\mu(y)}{\mu(B(y,\rho(z_{n,i},y) ))}} & \lesssim \sum_{l \geq l_0}\frac{1}{2^{l\alpha} l(R_{n,i})^{\alpha}} \frac{1}{\mu(B_l)} \int_{R' \cap B_l}{|b^1_{Q^k_1}|d\mu}\\
& \lesssim \frac{1}{l(R_{n,i})^{\alpha}} \sum_{l\geq l_0} \frac{1}{2^{l \alpha}} \frac{\mu(B_l)^{\frac{1}{p'}}}{\mu(B_l)} \left ( \int_{R'}{|b^1_{Q^k_1}|^p d\mu} \right )^{\frac{1}{p}}\\
& \lesssim  \frac{\mu(R')^{\frac{1}{p}}}{l(R_{n,i})^{\alpha} \mu(R_{n,i})^{\frac{1}{p}}} \sum_{l\geq l_0} \frac{1}{2^{l \alpha}},
\end{align*}
where $l_0$ is a fixed integer which only depends on the dimensional constant $a_0$ of Lemma \ref{cubes}. We have applied the Hölder inequality to get the second line, and the fact that for every $l \geq l_0$, $\mu(B_l) \gtrsim \mu(R_{n,i})$ to get the last line. Consequently, since $\mu(R_{n,i}) \leq \mu(R_n)$, we obtain
\[ | \langle b^2_{Q^k_2} 1_{R_{n,i}} , T(b^1_{Q^k_1} 1_{R'})  \rangle | \lesssim \mu(R_{n})^{\frac{1}{p'}} \mu(R')^{\frac{1}{p}}.\]
For every $n \in N_0$ and every $i$ such that $\rho(R_{n,i}, R') < l(R_{n,i})$, apply \eqref{WBP1jut} in the particular case when there is only one cube involved to get
\[ | \langle b^2_{Q^k_2} 1_{R_{n,i}} , T(b^1_{Q^k_1} 1_{R'})  \rangle |  \lesssim \mu(R_{n,i})^{\frac{1}{\sigma}} \mu(R')^{\frac{1}{\sigma'}} \lesssim \mu(R_{n})^{\frac{1}{\sigma}} \mu(R')^{\frac{1}{\sigma'}}. \]
Since $\mu (R_n) \lesssim \mu(R')$ and since there is a finite uniformly bounded number of $i$, we have
\[ | \langle b^2_{Q^k_2} 1_{R_{n}} , T(b^1_{Q^k_1} 1_{R'})  \rangle |  \lesssim \mu(R_{n})^{\frac{1}{\nu}} \mu(R')^{\frac{1}{\nu'}}, \]
with $\nu = \max(\sigma,p')$. Now, applying \eqref{WBP1jut} to the terms for which $n \in N\backslash N_0$, and the fact that the cardinal of $N_0$ is uniformly bounded, write 
\begin{align*}
\left | \left \langle \beta^2_{Q_2} \left ( \sum_{n\in N}{\alpha_n 1_{R_n}} \right ), T(\beta^1_{Q_1} 1_{R'})  \right \rangle \right | \! & = \left | \left \langle b^2_{Q^k_2} \left ( \sum_{n\in N}{\alpha_n 1_{R_n}} \right ), T(b^1_{Q^k_1} 1_{R'})  \right \rangle \right | \!   \\
& \lesssim \! \left ( \sum_{n \in N\backslash N_0}{\!\! |\alpha_n|^{\sigma} \mu(R_n)}  \right  )^{\! \frac{1}{\sigma}}\!\!\!\! \mu(R')^{\frac{1}{\sigma'}} \! + \! \sum_{n \in N_0}{|\alpha_n| \mu(R_{n})^{\frac{1}{\nu}} \mu(R')^{\frac{1}{\nu'}} }\\
& \lesssim \! \left ( \sum_{n \in N\backslash N_0}{\! \!\!\!\! |\alpha_n|^{\sigma} \mu(R_n)} \!  \right  )^{\! \frac{1}{\sigma}}\!\!\!\! \mu(R')^{\frac{1}{\sigma'}} \! +  \!\! \left( \sum_{n \in N_0} {\!|\alpha_n|^{\nu} \mu(R_n)}  \right  )^{\! \frac{1}{\nu}}\!\!\!\! \mu(R')^{\frac{1}{\nu'}}\\
\end{align*}
Remark that using Hölder's inequality with $\sum{\mu(R_n) \lesssim \mu(R')}$ we can raise $\sigma$ to $\nu$ if $\sigma < p'$. Thus, we have
\[ \left | \left \langle \beta^2_{Q_2} \left ( \sum_{n\in N}{\alpha_n 1_{R_n}} \right ), T(\beta^1_{Q_1} 1_{R'})  \right \rangle \right | \! \lesssim \left \| \sum_{n \in N}{\alpha_n 1_{R_n}}  \right  \|_{\nu} \mu(R')^{\frac{1}{\nu'}}.  \]
\noindent It remains only to see what happens when $R'$ is not contained inside $\widehat{Q^k_1}$ and is not disjoint with $\widehat{Q^k_1}$. In this case, write again $R' \cap \widehat{Q^k_1} = \cup_{j\in J}{R'_j}$, where the $R'_j$ are dyadic neighbors of $Q^k_1$. Distinguish the $j$ for which $\rho(R'_j, {R'}^c) \geq l(R'_j )$ (let $J_0$ be the set composed of such $j$) and those for which $\rho(R'_j, {R'}^c) < l(R'_j )$ (remark that there can very well be no such $j$). For the latter, apply the same argument as above : the only difference is that there can be a finite (uniformly bounded) number of $n$ for which $l(R_n) > l(R'_j)$ (and then since $l(R_n)\leq l(R')$ they have comparable measure) but one only has to apply either the dual estimate of \eqref{WBP1jut} if $\rho(R_n,R'_j) < l(R'_j)$, or the usual decomposition and \eqref{standard estimate dyadic} if $\rho(R_n,R'_j) \geq l(R'_j)$, to get the expected bound for each one of those terms. We obtain for every $j \in J \backslash J_0$
\[  \left | \left \langle b^2_{Q^k_2} \left ( \sum_{n}{\alpha_n 1_{R_n}} \right ), T(b^1_{Q'} 1_{R'_j})  \right \rangle \right | \! \lesssim \left \| \sum_{n}{\alpha_n 1_{R_n}}  \right  \|_{\nu_1} \mu(R')^{\frac{1}{\nu'_1}},    \]
with $\nu_1 = \max(\sigma,p')$. For $j \in J_0$, apply once again the same kind of decomposition as \mbox{before :} setting $f = \sum_{n}{\alpha_n 1_{R_n}}$, we have
\begin{align*}
\left \langle b^2_{Q^k_2}f , T(b^1_{Q^k_1} 1_{R'_j})  \right \rangle  & = \left \langle b^2_{Q^k_2} f, T(b^1_{Q^k_1} 1_{R'_j} - [b^1_{Q^k_1}]_{R'_j} 1_{R'_j})  \right \rangle +  [b^1_{Q^k_1}]_{R'_j}  \left \langle b^2_{Q^k_2} f, T( 1_{R'_j})  \right \rangle\\
& = \Sigma_1 + \Sigma_2\\
\end{align*}
Remark that $|[b^1_{Q^k_1}]_{R'_j}| \leq \frac{\mu(R')}{\mu(R'_j)} |[b^1_{Q^k_1}]_{R'}| \lesssim \frac{\mu(Q_1)}{\mu(Q^k_1)} \lesssim1$. Apply \eqref{standard estimate dyadic} and the disjointness of the cubes $R_n$ to get 
\begin{align*}
|\Sigma_1| & \lesssim \int_{\cup{R_n}}{|b^2_{Q^k_2} f|d\mu} \int_{R'_j}{|b^1_{Q^k_1} 1_{R'_j} - [b^1_{Q^k_1}]_{R'_j} 1_{R'_j}|d\mu} \frac{1}{\mu(R'_j)}\\
& \lesssim \| f \|_{q'} \left ( \int_{\cup{R_n}}{|b^2_{Q^k_2}|^q d\mu} \right )^{\frac{1}{q}} \lesssim  \| f \|_{q'} \ \ \mu(R')^{\frac{1}{q}}.\\
\end{align*}
Apply the Hardy inequality \eqref{Hardy1} to $\Sigma_2$ for some $1 < r < q$ to get
\begin{align*}
|\Sigma_2| & \lesssim \left( \int_{\cup{R_n}}{|f b^2_{Q^k_2}|^r d\mu} \right )^{\frac{1}{r}} \mu(R'_j)^{\frac{1}{r'}} \lesssim \left( \int_{\cup{R_n}}{|f |^{r\theta'} d\mu} \right )^{\frac{1}{r \theta'}} \left( \int_{\cup{R_n}}{|b^2_{Q^k_2} |^{q} d\mu} \right )^{\frac{1}{r\theta}} \mu(R')^{\frac{1}{r'}},
\end{align*}
where the last inequality is obtained by applying Hölder's inequality with the exponent $\theta = \frac{q}{r}$. Since $\sum{\mu(R_n)} \lesssim \mu(R')$, we have $\int_{\cup{R_n}}{|b^2_{Q^k_2} |^{q} d\mu} \lesssim C \mu(R')$, and thus
\[ |\Sigma_2| \lesssim \| f \|_{\nu_2} \mu(R')^{\frac{1}{\nu'_2}},   \] 
with $\nu_2 = r \theta' = \frac{rq}{q-r}.$ Finally, as there is a finite uniformly bounded number of $j$ in $J$, we obtain as expected the bound
\[ \left | \left \langle \beta^2_{Q_2} \left ( \sum_{n}{\alpha_n 1_{R_n}} \right ), T(\beta^1_{Q_1} 1_{R'})  \right \rangle \right | \! \lesssim \left \| \sum_{n }{\alpha_n 1_{R_n}}  \right  \|_{\nu} \mu(R')^{\frac{1}{\nu'}},  \]
with $\nu = \max(\nu_1, \nu_2, q')$, and thus \eqref{WBP1} is proved. The dual estimates obviously follow by symmetry of our hypotheses. It remains only to apply Theorem \ref{AR} to get the boundedness of $T$.
\end{proof}
\medskip

\section{Preliminaries to the proof of Theorem \ref{AR}}

\medskip

We shall use the language of dyadic cubes rather than tiles used in \cite{AHMTT}.

\subsection{Adapted martingale difference operators}Let $Q$ be a dyadic cube of the space of homogeneous type $X$. For every $f \in L^2(X)$ we define
\[ E_Q(f) = [f]_Q  1_Q \quad \mathrm{and} \quad \Delta_Q (f) = \sum_{Q' \in \widetilde{Q}}{E_{Q'}(f) - E_Q(f)}. \] 
If $Q$ has only one child, then $\Delta_Q = 0$. It is easy to check that if $\mu(X) = +\infty$, for any $f \in L^2(X)$ we have the representation formula
\[f = \sum_{Q \subset X}{\Delta_Q f},\]
with $\|f\|^2_2 = \sum_{Q \subset X}{\|\Delta_Q f\|^2_2}.$ If $\mu(X) < +\infty$, replace $f$ by $f - [f]_X$ on the left hand sides. We omit the details and refer to \cite{AHMTT} for further explanation.

Let $\mathbf{P}$ be a collection of cubes in $X$. We will say that a locally integrable function $b$ is pseudo-accretive on $\mathbf{P}$ if $| [b]_Q | \gtrsim 1$ for all $Q \in \mathbf{P}$, and is strongly pseudo-accretive on $\mathbf{P}$ if we have $|[b]_Q| \gtrsim 1$ and $ | [b]_{Q'} | \gtrsim 1$ for all $Q \in \mathbf{P}$, $Q' \in \widetilde{Q}$. Conversely, we say that such cubes $Q$ are respectively pseudo-accretive (\emph{pa}), strongly pseudo-accretive (\emph{spa}) with respect to $b$. A cube that is pa but not spa with respect to $b$ is called degenerate pseudo-accretive (\emph{dpa}) with respect to $b$. Given a function $b$ strongly pseudo-accretive on $Q$ (that is, on $\mathbf{P} = \{Q\}$), we define
\[ E^b_Q (f) = \frac{[f]_Q}{[b]_Q}  1_Q, \]
\[ \Delta_Q^b (f) = \sum_{Q' \in \widetilde{Q}}{E^b_{Q'}(f)} - E^b_Q (f). \] 
Again, if $Q$ has only one child then $\Delta_Q^b = 0$. A straightforward computation shows that 
\[ \int_Q{b \Delta_Q^b f d\mu} = 0. \]
Note that the operators $\Delta_Q^b$ introduced here are not the same ones than those defined by S. Hofmann in \cite{Hofmann}. However, if $D_Q^b$ is the operator defined in \cite{Hofmann}, we have $D_Q^b = b \Delta_Q^b$. It will be easier to work with the operators $\Delta_Q^b$ introduced here because we do not assume $b \in L^2(Q)$. The following proposition gives a wavelet representation of theses operators.

\begin{prop}\label{representation} 
Let $Q$ be a dyadic cube and $N_Q$ the number of children of $Q$. Assume $N_Q \geq 2$. If $b \in L^1(Q)$ is strongly pseudo-accretive on $Q$, then there exist $0<C<\infty$ and functions $\phi_Q^{b,s}, \widetilde{\phi}_Q^{b,s}$, $1\leq s \leq N_Q -1$ with the following properties
\begin{enumerate}
\item The functions $\phi_Q^{b,s}, \widetilde{\phi}_Q^{b,s}$ are supported on $Q$ and constant on each child of $Q$.
\medskip
\item $\int_Q {b\phi_Q^{b,s} d \mu} = \int_Q{\widetilde{\phi}_Q^{b,s} b d\mu} = 0.$
\medskip
\item $\|\phi_Q^{b,s} \|_2 + \| \widetilde{\phi}_Q^{b,s}\|_2 \leq C.$
\medskip
\item $\int_Q {\widetilde{\phi}_Q^{b,s}b \phi_Q^{b,s'} d\mu} = \delta_{s,s'}.$
\medskip
\item For all $f\in L^1(X)$ \[ \Delta_Q^b f = \sum_{s=1}^{N_Q -1}{\langle f, \phi_Q^{b,s}\rangle \widetilde{\phi}_Q^{b,s}  }. \] 

\item For all $f\in L^1(X)$ \[ \frac{1}{C} \sum_{s=1}^{N_Q -1}{|\langle f, \phi_Q^{b,s} \rangle |^2} \leq \|\Delta_Q^b f \|^2_{L^2(Q)} \leq C \sum_{s=1}^{N_Q -1}{|\langle f, \phi_Q^{b,s} \rangle |^2}.   \]
\end{enumerate}
Moreover, $C$ depends only on $C(b,Q)=\sup_{Q' \in \widetilde{Q}}([ |b|]_Q, |[b]_Q^{-1}|, |[b]_{Q'}^{-1} | ).$
\end{prop}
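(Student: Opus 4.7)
The plan is to reduce the construction of $\phi_Q^{b,s}$ and $\widetilde{\phi}_Q^{b,s}$ to a finite-dimensional linear-algebra problem on the $N_Q$-dimensional space $V$ of functions on $Q$ constant on each child. Let $Q_1,\ldots,Q_{N_Q}$ denote the children of $Q$ and set $h_i := [b]_{Q_i}\,\mu(Q_i)$. The bilinear form $B(f,g) := \int_Q f\, b\, g \, d\mu$ is diagonal in the basis $\{1_{Q_i}\}$ with entries $h_i$. Strong pseudo-accretivity of $b$, together with the doubling property (which ensures $\mu(Q_i) \approx \mu(Q)$) and the bound on $[|b|]_Q$, gives $|h_i| \approx \mu(Q_i) \approx \mu(Q)$, so $B$ is non-degenerate on $V$. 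Since $B(1_Q, 1_Q) = [b]_Q\, \mu(Q) \neq 0$, the subspace $W := \{g \in V : \int_Q b\,g \, d\mu = 0\}$ has codimension one in $V$ and $B|_W$ is still non-degenerate. Property (2) amounts exactly to requiring $\phi_Q^{b,s}, \widetilde{\phi}_Q^{b,s} \in W$.

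The construction will be explicit. For $s = 1,\ldots, N_Q - 1$, set
\[ \phi_Q^{b,s} := \mu(Q)^{-1/2} \left( 1_{Q_s} - \frac{h_s}{h_{N_Q}} 1_{Q_{N_Q}} \right). \]
These lie in $W$ (since $\int_Q b \, \phi_Q^{b,s}\, d\mu = h_s - h_s = 0$), are linearly independent, satisfy property (1), and $\|\phi_Q^{b,s}\|_2 \lesssim 1$. For the dual basis I would compute the $(N_Q - 1) \times (N_Q - 1)$ Gram matrix $G_{s t} = B(\phi_Q^{b,s}, \phi_Q^{b,t})$: it is a rank-one perturbation of a diagonal matrix, invertible via the Sherman--Morrison identity. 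This yields the clean formula
\[ \widetilde{\phi}_Q^{b,s} = \frac{\mu(Q)^{1/2}}{h_s} 1_{Q_s} - \frac{1}{[b]_Q\, \mu(Q)^{1/2}} 1_Q, \]
after which a direct calculation verifies that $\widetilde{\phi}_Q^{b,s} \in W$, $\|\widetilde{\phi}_Q^{b,s}\|_2 \lesssim 1$, and the biorthogonality relation (4). All implicit constants are controlled in terms of $C(b,Q)$ and the doubling constant.

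For the wavelet representation (5), observe that $\Delta_Q^b f \in W$, so we may write $\Delta_Q^b f = \sum_s a_s \widetilde{\phi}_Q^{b,s}$. Applying $B(\phi_Q^{b,s'}, \cdot)$ and using biorthogonality gives $a_{s'} = B(\phi_Q^{b,s'}, \Delta_Q^b f)$; since $\phi_Q^{b,s'} \in W$ and is constant on each child, a short computation using the defining formula for $\Delta_Q^b f$ collapses this to $\sum_{Q' \in \widetilde{Q}} (\phi_Q^{b,s'})|_{Q'} \int_{Q'} f\, d\mu = \langle f, \phi_Q^{b,s'} \rangle$. For the norm equivalence (6), the upper bound is the triangle inequality combined with $\|\widetilde{\phi}_Q^{b,s}\|_2 \lesssim 1$ and the uniform bound on $N_Q$ furnished by doubling. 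For the lower bound, I would use the identity $\langle f, \phi_Q^{b,s}\rangle = \int_Q \phi_Q^{b,s}\, b\, \Delta_Q^b f \, d\mu$, the pointwise bound $\|\phi_Q^{b,s}\|_\infty \lesssim \mu(Q)^{-1/2}$, the control $\int_Q |b|\, d\mu \lesssim \mu(Q)$, and Cauchy--Schwarz. There is no deep obstacle; the essential task is bookkeeping, namely to check that every implicit constant depends only on $C(b,Q)$ and the doubling constant, with $[|b|]_{Q'} \lesssim [|b|]_Q$ following from doubling.
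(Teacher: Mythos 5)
Your proof is correct and takes a genuinely different route from the paper's. The paper proceeds abstractly: on the finite-dimensional space $V_Q$ it introduces the bilinear form $B(f,g)=\int_Q fbg\,d\mu$, defines $X_Q=\{g : B(1_Q,g)=0\}$ and the $L^2$-orthogonal complement $W_Q$ of $\mathbb{C}1_Q$, proves that the oblique projection $P_{X_Q}\colon W_Q\to X_Q$ is an isomorphism with controlled norms, starts from an $L^2$-orthonormal basis $\phi_Q^s$ of $W_Q$, sets $\widetilde\phi_Q^{b,s}=P_{X_Q}(\phi_Q^s)$, invokes the Riesz representation to produce the operator $A$ realizing $B$ on $X_Q$, and finally defines $\phi_Q^{b,s}={}^tA^{-1}\widetilde\phi_Q^{b,s}$. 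You instead write down explicit closed-form wavelets (differences of indicators of children, singling out $Q_{N_Q}$), observe that the Gram matrix is a rank-one perturbation of a diagonal matrix, and read off the dual system; the properties $(1)$--$(6)$ then reduce to short direct calculations. Your formula for $\widetilde\phi_Q^{b,s}$ is indeed correct: a check on each child confirms $\int_Q \widetilde\phi_Q^{b,s}\,b\,\phi_Q^{b,s'}\,d\mu=\delta_{ss'}$, with the off-child contributions from $Q_{s'}$ and $Q_{N_Q}$ canceling. The trade-off is transparency versus symmetry: your explicit construction makes the dependence of the constants on $C(b,Q)$ and $C_D$ immediate to track, at the cost of an arbitrary choice of distinguished child; the paper's construction is basis-free and preserves the $L^2$-orthogonality of the $\phi_Q^s$ that it starts from (a feature not required by the statement), at the cost of having to bound the norms of $A^{-1}$ and of the two oblique projections.

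One place where your argument is terse and a careless reader could stumble is the lower bound in $(6)$: since $b$ is only assumed in $L^1(Q)$, one cannot apply Cauchy--Schwarz directly to $\int_Q \phi_Q^{b,s}\,b\,\Delta_Q^b f\,d\mu$ as an $L^2\times L^2$ pairing. The rescue, which you gesture at by noting $\Delta_Q^b f$ is constant on children, is to rewrite the pairing as the finite sum $\sum_i [\phi_Q^{b,s}]_{Q_i}\, c_i\, h_i$ with $c_i=(\Delta_Q^b f)|_{Q_i}$ and $|h_i|\lesssim\mu(Q_i)$, and then apply Cauchy--Schwarz to that finite sum; equivalently, use $\|\Delta_Q^b f\|_\infty\lesssim \mu(Q)^{-1/2}\|\Delta_Q^b f\|_2$ (valid because the values $c_i$ live on children of comparable measure) together with $\int_Q|b|\,d\mu\lesssim\mu(Q)$. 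Spelling this out would make the proof airtight, but the idea is already in your sketch.
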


The proof is postponed to appendix $A$. Let us comment. If $N_Q = 1$, then there is nothing to do as $\Delta_Q^b f = 0$ anyway. On the other hand, $N_Q$ is uniformly bounded as mentioned earlier. Recall here that $\langle f,g \rangle = \int_X {fg d\mu}.$ Next, these functions are adapted Haar functions as in \cite{AHMTT} with a biorthogonality property. Note that $(1)$ and $(2)$ imply that $\|\phi_Q^{b,s} \|_1 + \| \widetilde{\phi}_Q^{b,s}\|_1 \leq C \mu(Q)^{\frac{1}{2}}$ and $\|\phi_Q^{b,s} \|_{\infty} + \| \widetilde{\phi}_Q^{b,s}\|_{\infty} \leq C_X^{\frac{1}{2}} C \mu(Q)^{- \frac{1}{2}}$, with 
\begin{equation}\label{CX}
C_X = \sup_{Q \subset X, Q' \in \widetilde{Q}}{\frac{\mu(Q)}{\mu(Q')}}.
\end{equation} 
As long as $C(b,Q)$ remain uniform as $Q$ and $b$ vary, we obtain uniform estimates. As this shall be the case throughout, and as $s$ will play no role in our analysis, we shall make the standing assumption that, unless it is $0$, $\Delta_Q^b f$ rewrites as 
\[ \Delta_Q^b f = \langle f, \phi_Q^b \rangle \phi_Q^b. \]
Given a dyadic cube $Q \subset X$, set
\[ R_Q = \{  Q' \, \mathrm{dyadic} \, ; \, Q' \subset Q    \}. \]
Now, if $\{P_i\}$ is a collection of non overlapping dyadic subcubes of a cube $Q_0$, set 
\[ \Omega(Q_0,\{P_i\}) = R_{Q_0} \setminus \bigcup_i{R_{P_i}},\]
and assume that  $\Omega(Q_0,\{P_i\}) \subset \{ Q \subset Q_0  \, : \, Q \, \mathrm{spa} \, b \}.$ Then we have the following results.
\begin{lem}\label{Mart}
Let $1<q < +\infty$, let $C_{\{P_i\}}(b) = \sup_i{[|b|^q]^{1/q}_{P_i}} $. We have, with $\Omega:= \Omega(Q_0,\{P_i\}),$
\[ \sum_{Q \in \Omega}{| \langle f, \phi_{Q}^b \rangle |^2} \lesssim   C_{\{P_i\}}(b)^2 \, \|f\|^2_{L^2(Q_0)}.\]
\end{lem}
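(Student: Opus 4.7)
The plan is first to pass from the Haar coefficients $\langle f, \phi_Q^b \rangle$ to the $L^2$ norms of the adapted martingale differences via property $(6)$ of Proposition \ref{representation}, then to compare the $b$-adapted differences $\Delta_Q^b f$ to the standard (unadapted) ones $\Delta_Q f$ by an algebraic manipulation, and finally to bound the two resulting pieces by a standard Bessel inequality and a dyadic Carleson embedding. For the comparison, I would rewrite $\Delta_Q^b f = \sum_{Q' \in \widetilde{Q}}\bigl([f]_{Q'}/[b]_{Q'} - [f]_Q/[b]_Q\bigr) 1_{Q'}$, clear the fractions via
\[ \frac{[f]_{Q'}}{[b]_{Q'}} - \frac{[f]_Q}{[b]_Q} = \frac{[b]_Q \bigl([f]_{Q'} - [f]_Q\bigr) - [f]_Q \bigl([b]_{Q'} - [b]_Q\bigr)}{[b]_Q [b]_{Q'}}, \]
and use the strong pseudo-accretivity of every $Q \in \Omega$ and each $Q' \in \widetilde{Q}$ (which gives $|[b]_Q|, |[b]_{Q'}| \gtrsim 1$) to bound the denominators from below. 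Since $|[f]_Q|$ is constant in $Q'$, this yields the key pointwise-in-$L^2$ bound
\[ \|\Delta_Q^b f\|_{L^2(Q)}^2 \lesssim \|\Delta_Q f\|_{L^2(Q)}^2 + |[f]_Q|^2 \|\Delta_Q b\|_{L^2(Q)}^2. \]

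Summing over $Q \in \Omega \subset R_{Q_0}$ produces two contributions. The first, $\sum_{Q \in \Omega} \|\Delta_Q f\|_{L^2(Q)}^2$, is dominated by $\|f\|_{L^2(Q_0)}^2$ thanks to the standard $L^2$ orthogonality of the unadapted dyadic martingale differences restricted to the subtree below $Q_0$. The second is a Carleson-type sum $\sum_{Q \in \Omega} \alpha_Q |[f]_Q|^2$ with $\alpha_Q := \|\Delta_Q b\|_{L^2(Q)}^2$; I would handle it via the dyadic Carleson embedding theorem, which gives $\sum_{Q \in \Omega} \alpha_Q |[f]_Q|^2 \lesssim K^2 \|f\|_{L^2(Q_0)}^2$ as soon as one has the packing estimate $\sum_{Q \in \Omega, Q \subset R} \alpha_Q \leq K^2 \mu(R)$ for every $R \in \Omega$, with $K^2 \lesssim C_{\{P_i\}}(b)^2$.

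The main obstacle is therefore to establish this Carleson packing condition. The plan is to invoke orthogonality a second time to rewrite
\[ \sum_{Q \in \Omega, Q \subset R} \|\Delta_Q b\|_{L^2(Q)}^2 = \biggl\| \sum_{Q \in \Omega, Q \subset R} \Delta_Q b \biggr\|_2^2, \]
and then to telescope: because the "leaves" of the $\Omega$-subtree rooted at $R$ are precisely the $P_j \subset R$, the sum equals almost everywhere the function $\tilde b - [b]_R 1_R$, where $\tilde b = [b]_{P_j}$ on each $P_j \subset R$ (and $\tilde b = b$ on what remains of $R$, should $\{P_i\}$ not exhaust it). Jensen's inequality then yields $|[b]_{P_j}| \leq [|b|^q]_{P_j}^{1/q} \leq C_{\{P_i\}}(b)$, and the same bound on $|[b]_R|$ follows once the $P_j$'s are taken (as in the stopping-time applications to local $Tb$) to cover $R$ up to a null set; the disjointness of the $P_j$'s then gives $\|\tilde b - [b]_R 1_R\|_2^2 \lesssim C_{\{P_i\}}(b)^2 \mu(R)$, as desired. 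The delicate point is precisely this telescoping and the handling of any portion of $R$ outside $\bigcup P_j$, where one must exploit that the $\Omega$-cubes covering it remain strongly pseudo-accretive in order to keep the remainder contribution under control.
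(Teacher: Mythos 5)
Your overall route — replace the Haar coefficients by $\|\Delta_Q^b f\|_{L^2(Q)}^2$ via Proposition~\ref{representation}(6), dominate these by $\|\Delta_Q f\|_{L^2(Q)}^2 + |[f]_Q|^2\|\Delta_Q b\|_{L^2(Q)}^2$ after clearing denominators, sum the first piece by Bessel, and feed the second into the dyadic Carleson embedding after a telescoping packing estimate — is essentially the argument in Lemma 6.7 of \cite{AHMTT} to which the paper defers, and the reduction to the packing condition $\sum_{Q \in \Omega,\, Q \subset R}\|\Delta_Q b\|_{L^2(Q)}^2 \lesssim \mu(R)$ is the right target. However, the last step as you describe it has two real problems.

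First, you should not pass directly to the limit function $\tilde b$ and then try to bound $\int_F |b|^2$ with $F=R\setminus\bigcup P_j$: nothing in the hypotheses puts $b$ in $L^2(F)$ (one only has $L^q$ control on the $P_i$ and bounded \emph{averages} of $|b|$ over $\Omega$, with $q$ possibly $<2$), so the identity $\sum\|\Delta_Q b\|_2^2 = \|\tilde b - [b]_R 1_R\|_2^2$ is circular unless one already knows the left side is finite. Truncate instead at a finite generation $k$: orthogonality of the $\Delta_Q$ gives
\[
\sum_{\substack{Q\in\Omega,\; Q\subset R \\ l(Q)>\delta^k}}\|\Delta_Q b\|_2^2
= \Bigl\| \tilde E_k b - [b]_R 1_R \Bigr\|_2^2,
\qquad
\tilde E_k b = \!\!\sum_{\substack{Q\in\Omega,\; Q\subset R\\ l(Q)=\delta^k}}\!\! [b]_Q 1_Q \ +\!\! \sum_{\substack{P_j\subset R\\ l(P_j)>\delta^k}}\!\! [b]_{P_j} 1_{P_j},
\]
which is a sum over a disjoint cover of $R$, so $\|\tilde E_k b\|_2^2 = \sum_Q |[b]_Q|^2\mu(Q) + \sum_j |[b]_{P_j}|^2\mu(P_j)\lesssim \mu(R)$, uniformly in $k$; then let $k\to\infty$. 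Second, your appeal to strong pseudo-accretivity to control this is misplaced: that hypothesis gives the \emph{lower} bound $|[b]_Q|\gtrsim 1$ (needed for the denominators in your fraction manipulation), whereas the truncated estimate needs the \emph{upper} bound $|[b]_Q|\leq [|b|]_Q\lesssim 1$ for $Q\in\Omega$. That upper bound is not a consequence of pseudo-accretivity; it is part of the standing assumption, stated just after Proposition~\ref{representation}, that the constants $C(b,Q)$ (which include $[|b|]_Q$) stay uniform over the cubes in play — the very assumption you already invoked to make part (6) of that proposition uniform. (Also note that $R\setminus\bigcup P_j$ generically has positive measure in the stopping-time applications of Lemma~\ref{Hofmann}, so your parenthetical that the $P_j$ exhaust $R$ up to a null set is not something you may assume.)
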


\begin{proof}
We refer to Lemma $6.7$ of \cite{AHMTT} for the detail. The only difference is that we use here the language of cubes rather than tiles.
\end{proof}

\begin{lem} \label{Mart2}
Suppose that $C_{\{P_i\}}(b) < +\infty$, $b 1_{Q_0 \backslash \cup {P_i}} \in L^{\infty}(Q_0)$, and $b \in L^q(Q_0)$ for some $1<q < +\infty$. Let $c_Q$ be a set of uniformly bounded coefficients, and define, with $\Omega:= \Omega(Q_0,\{P_i\}),$ the bounded bilinear form
\[ \langle Lf , g \rangle = \sum_{Q \in \Omega}{c_Q \langle f, \phi_{Q}^b \rangle \langle  \phi^b_Q  , g  \rangle}, \quad \mathrm{for} \, f,g \in L^2(Q_0).  \]
Then $L$ is bounded on $L^{\nu}(Q_0)$ for all $1<\nu< +\infty$, with
\[ \|Lf\|_{L^{\nu}(Q_0)} \lesssim  \| \{c_Q\} \|_{l^{\infty}} \left (C_{\{P_i\}}(b) +1 \right ) \| f \|_{L^{\nu}(Q_0)}. \]
Furthermore, $\sum_{Q \in \Omega}{\langle f, \phi_{Q}^b \rangle b \phi_{Q}^b}$ unconditionally converges in $L^{\nu}(Q_0)$ if $1<\nu \leq q$ and $f\in L^{\nu}(Q_0)$, with
\[ \left \| \sum_{Q \in \Omega}{c_{Q}\langle f, \phi_{Q}^b \rangle b \phi_{Q}^b } \right \|_{L^{\nu}(Q_0)} \leq \| L \|_{L^{\nu}\rightarrow L^{\nu}} \left (C_{\{P_i\}}(b) + \|b 1_{Q_0 \backslash \cup{P_i}}\|_{\infty}\right ) \|f\|_{L^{\nu}(Q_0)}  . \]
The implicit constants in the $\lesssim$ depend only on the doubling constant $C_D$ and $\nu$.
\end{lem}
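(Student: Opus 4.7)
The goal has two pieces: the $L^\nu$ boundedness of $L$, and the norm bound for multiplication by $b$ on $Lf$ together with unconditional convergence of the series. For the first, I plan to run the standard Calder\'on--Zygmund scheme in the dyadic setting ($L^2$ bound from Lemma \ref{Mart}, weak-$(1,1)$ bound from a dyadic CZ decomposition, then interpolation and duality). For the second, I plan to exploit the pointwise structure of $Lf$: it is constant on each $P_i$, and on the complement the function $b$ is bounded, so that multiplication by $b$ acts with an explicit block structure.

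For the boundedness of $L$ on $L^\nu(Q_0)$, write
\[ \langle Lf, g\rangle = \sum_{Q \in \Omega} c_Q \langle f, \phi_Q^b\rangle \langle \phi_Q^b, g\rangle. \]
Cauchy--Schwarz in the $Q$-variable followed by Lemma \ref{Mart} in each factor controls $\|L\|_{L^2 \to L^2}$ by a polynomial in $C_{\{P_i\}}(b)$ times $\|\{c_Q\}\|_{\ell^\infty}$. Next, for $f \in L^1(Q_0)$ and level $\lambda > 0$, I would run a dyadic Calder\'on--Zygmund decomposition $f = g + \sum_i \beta_i$, with $|g| \lesssim \lambda$, each $\beta_i$ supported in a dyadic cube $B_i$ with $\int \beta_i\, d\mu = 0$, and $\sum_i \mu(B_i) \lesssim \lambda^{-1} \|f\|_1$. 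The key observation is that $L\beta_i$ is supported in $B_i$: for any $Q \in \Omega$ strictly containing $B_i$, the function $\phi_Q^b$ is constant on $B_i$ (since $B_i$ lies inside a single child of $Q$), so the mean-zero property of $\beta_i$ gives $\langle \beta_i, \phi_Q^b\rangle = 0$; while for $Q \subseteq B_i$, $\phi_Q^b$ is supported in $B_i$. Hence $\{|L(\sum_i \beta_i)| > 0\} \subseteq \bigcup_i B_i$, which combined with the $L^2$ estimate on $g$ yields $\mu\{|Lf| > \lambda\} \lesssim \lambda^{-1} \|f\|_1$. Marcinkiewicz interpolation then gives $L^\nu$ boundedness for $1 < \nu \leq 2$; since the kernel $K(x,y) = \sum c_Q \phi_Q^b(x)\phi_Q^b(y)$ is symmetric, $L$ is self-adjoint and duality extends the bound to $2 \leq \nu < \infty$.

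For the second estimate, observe that $\sum_Q c_Q \langle f, \phi_Q^b\rangle b \phi_Q^b = b \cdot Lf$. On each $P_i$, $Lf$ is a constant $\kappa_i$: any $Q \in \Omega$ meeting $P_i$ must strictly contain $P_i$ (as $P_i \notin \Omega$), and $\phi_Q^b$ is constant on the unique child of $Q$ containing $P_i$. H\"older's inequality with $\nu \leq q$ then gives
\[ \|bLf\|_{L^\nu(P_i)} = |\kappa_i|\, \|b\|_{L^\nu(P_i)} \leq C_{\{P_i\}}(b)\, \|Lf\|_{L^\nu(P_i)}, \]
while on $Q_0 \setminus \bigcup_i P_i$ the $L^\infty$ control of $b$ yields $\|bLf\|_{L^\nu(Q_0 \setminus \cup P_i)} \leq \|b 1_{Q_0 \setminus \cup P_i}\|_\infty \|Lf\|_{L^\nu(Q_0 \setminus \cup P_i)}$. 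Summing the two $\ell^\nu$ contributions and invoking the $L^\nu$ bound on $L$ yields the claimed inequality. Unconditional convergence of the series then follows by applying the same estimate to truncated coefficients $(\epsilon_Q c_Q 1_F)$ with arbitrary signs $\epsilon_Q \in \{\pm 1\}$ and finite $F \subset \Omega$: this gives uniform $L^\nu$ control on all signed finite partial sums, and a standard density argument makes the full series Cauchy in $L^\nu(Q_0)$.

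The main obstacle is the weak-$(1,1)$ bound for $L$; what makes it work without any smoothness hypothesis on the kernel is the observation that $L\beta_i$ stays inside $B_i$, a consequence of the support properties and the locally-constant structure of $\phi_Q^b$. The rest is a routine application of Lemma \ref{Mart}, Marcinkiewicz interpolation, self-adjointness of $L$, and H\"older's inequality on each $P_i$.
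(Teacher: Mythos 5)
Your proof of the first part ($L^\nu$ boundedness of $L$) follows exactly the same route as the paper: $L^2$ bound from Lemma \ref{Mart} and Cauchy--Schwarz, weak $(1,1)$ via a dyadic Calder\'on--Zygmund decomposition using the key observation that $L\beta_i$ stays supported in (the closure of) the cube $B_i$ because $\phi_Q^b$ is constant on the children of $Q$ and $\beta_i$ has mean zero, then Marcinkiewicz interpolation and duality (symmetry of the bilinear form $\langle Lf,g\rangle$, i.e.\ ${}^tL=L$ --- what you call ``self-adjoint'' is really self-transposed, since the pairing $\langle\cdot,\cdot\rangle$ here is bilinear, but the conclusion is the same). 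No gap there.

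For the second part you take a genuinely different route. The paper dualizes: it estimates $\langle Lf, bg\rangle$ for $g\in L^{\nu'}$, splits $bg = bg1_F + bg1_{\cup P_i}$, and on each $P_i$ uses the constancy of $\phi_Q^b$ on $P_i$ to replace $bg1_{P_i}$ by $[bg]_{P_i}1_{P_i}$, then H\"older with $\nu\le q$ to control $|[bg]_{P_i}|$ by $C_{\{P_i\}}(b)[|g|^{\nu'}]_{P_i}^{1/\nu'}$. You argue on the primal side: you observe that $Lf$ itself is constant on each $P_i$ (every $Q\in\Omega$ meeting $P_i$ strictly contains it, so $\phi_Q^b$ is constant there), hence $\|bLf\|_{L^\nu(P_i)} = |\kappa_i|\,\|b\|_{L^\nu(P_i)}$, and H\"older with $\nu\le q$ gives $\|b\|_{L^\nu(P_i)}\le C_{\{P_i\}}(b)\,\mu(P_i)^{1/\nu}=C_{\{P_i\}}(b)\,\|Lf\|_{L^\nu(P_i)}/|\kappa_i|$; summing the $\nu$-th powers over $i$ and adding the $L^\infty$ piece on $F=Q_0\setminus\cup P_i$ gives the bound, in fact with $\max(C_{\{P_i\}}(b),\|b1_F\|_\infty)$ in place of the sum. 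Both arguments exploit the same two structural facts --- constancy of the $\phi_Q^b$ on the stopping cubes and the $L^q$ control of $b$ there, plus $b$ bounded on $F$ --- but your version is more direct and avoids the auxiliary dual function $g$. The only place you hand-wave slightly is unconditional convergence: uniform bounds on all signed finite partial sums give a weakly unconditionally Cauchy series, which converges unconditionally because $L^\nu(Q_0)$ ($1<\nu<\infty$) does not contain $c_0$; a ``density argument'' alone does not give this for arbitrary rearrangements, but the paper's own treatment of this point is at the same level of detail, so this is not a defect relative to the paper.
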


\begin{proof}
Let $F = Q_0 \backslash \cup {P_i}$. First, $L$ is bounded on $L^2(Q_0)$ by Lemma \ref{Mart} and the Cauchy-Schwarz inequality. We now prove that $L$ is of weak type $(1,1)$. Indeed, let $f \in L^1(Q_0)\cap L^2(Q_0)$, $\lambda >0$, and write a classical Calder\'on-Zygmund decomposition of \mbox{$f$:} there exist functions $h$ and $(\beta_i)_i$ supported in mutually disjoint dyadic cubes $Q_i$ such that $f=h+\sum_i{\beta_i}$, with $h \in L^1(Q_0) \cap L^{\infty}(Q_0)$, $\|h\|_{L^1(Q_0)} \lesssim \| f\|_{L^1(Q_0)}$, $\|h\|_{L^{\infty}(Q_0)} \lesssim \lambda$, and for all $i$, $\|\beta_i\|_{L^1(Q_0)\cap L^2(Q_0)} < +\infty$, $\int_{Q_i}{\beta_i d\mu} = 0.$\\ 
Observe that $L\beta = \sum_i \sum_{Q \in \Omega}{c_Q \langle \beta_i, \phi_Q^b \rangle \phi^b_Q  }$, so that if $Q$ strictly contains $Q_i$ then $\langle \beta_i, \phi_Q^b \rangle =0$ since $ \int_{Q_i}{\beta_i}=0$ and $\phi_Q^b$ constant on the children of $Q$. It is thus clear that $L\beta$ is supported in $\cup_i{\overline{Q_i}}$, where $\overline{Q_i}$ denotes the closure of $Q_i$. Hence $Lf=Lh$ on $Q_0 \backslash \cup_i{\overline{Q_i}}$. Now, write
\[ \mu(\{ x \in Q_0: |Lf(x)|> \lambda \}) \leq \mu(\{ x\in Q_0 \backslash \cup_i{\overline{Q_i}}: |Lh(x)| > \lambda \}) + \sum_i{\mu(\overline{Q_i})}, \]
and note that $\mu(\overline{Q_i}) = \mu(Q_i)$ by $(6)$ of Lemma \ref{cubes}. Since $h \in L^1(Q_0) \cap L^{\infty}(Q_0)$, $h\in L^2(Q_0)$ with $\|h\|^2_{L^2(Q_0)} \lesssim \lambda \|f\|_{L^1(Q_0)}$, so that, using the boundedness of $L$ on $L^2(Q_0)$,
\[ \mu(\{ x\in Q_0 \backslash \cup_i{\overline{Q_i}}: |Lh(x)| >\lambda \}) \leq \frac{1}{\lambda^2}\int_{Q_0}{|Lh|^2d\mu} \lesssim \frac{\|L\|^2_{L^2 \rightarrow L^2}}{\lambda}\|f\|_{L^1(Q_0)}.  \]
Furthermore, we have by the maximal theorem for dyadic averages
\[   \sum_i{\mu(Q_i)}\leq \frac{1}{\lambda}\|f\|_{L^1(Q_0)}. \]
This proves that $L$ is of weak type $(1,1)$. By duality and interpolation, $L$ is thus bounded on $L^{\nu}(Q_0)$ for all $1<\nu< +\infty$, with the required control on its norm. \\
The second part of Lemma \ref{Mart2} amounts to proving that for all $1<\nu \leq q$, and $f \in L^{\nu}(Q_0), g \in L^{\nu'}(Q_0),$
\[ |\langle Lf , bg \rangle| \leq  \| L \|_{L^{\nu} \rightarrow L^{\nu}} \left (C_{\{P_i\}}(b) + \|b 1_{Q_0 \backslash \cup{P_i}}\|_{\infty}\right )  \|f\|_{L^{\nu}(Q_0)}\|g\|_{L^{\nu'}(Q_0)}.\]
Write
\[ \langle Lf, b g \rangle = \sum_{Q\in \Omega}{c_Q  \langle f, \phi_{Q}^b \rangle \langle  \phi^b_Q , b g 1_F  \rangle} + \sum_{Q \in \Omega}{c_Q  \langle f, \phi_{Q}^b \rangle \langle  \phi^b_Q , b g 1_{\bigcup{P_i}}  \rangle} = \, (\mathrm{i})  +  (\mathrm{ii}).  \]
First, as a consequence of what precedes, we have
\[ |(\mathrm{i}) | \leq  \| L \|_{L^{\nu} \rightarrow L^{\nu}} \, \| f \|_{L^{\nu}(Q_0)} \|b 1_F  g\|_{L^{\nu'}(Q_0)} \leq \| L \|_{L^{\nu} \rightarrow L^{\nu}} \, \|b 1_F\|_{\infty}  \|f\|_{L^{\nu}(Q_0)}\|g\|_{L^{\nu'}(Q_0)}. \]
For the second term, write
\[ (\mathrm{ii}) = \sum_{Q \in \Omega}\sum_{i}{c_Q  \langle f, \phi_{Q}^b \rangle \langle  \phi^b_Q , 1_{P_i} \rangle [b g]_{P_i}} + \sum_{Q \in \Omega}\sum_{i}{c_Q \langle f, \phi_{Q}^b \rangle \langle  \phi^b_Q , (b g - [b g]_{P_i}) 1_{P_i}  \rangle}.  \]
Observe that for any $Q \in \Omega$, either $Q$ and $P_i$ are disjoint, or $\phi^b_Q $ is constant over $P_i$, which yields in any case
\[  \langle  \phi^b_Q  , (b g - [b g]_{P_i}) 1_{P_i}  \rangle = [\phi^b_Q ]_{P_i} \int_{P_i}{(b g - [b g]_{P_i}) d\mu} = 0.  \]
Therefore,
\[ (\mathrm{ii}) = \sum_{Q \in \Omega}\sum_{i}{c_Q  \langle f, \phi_{Q}^b \rangle \langle  \phi^b_Q , 1_{P_i} \rangle [b g]_{P_i}}, \]
and
\begin{align*}
| (\mathrm{ii}) | & \leq \| L \|_{L^{\nu} \rightarrow L^{\nu}} \, \|f\|_{L^\nu(Q_0)} \, \left \|   \sum_{i}{[b g ]_{P_i} 1_{P_i}} \right  \|_{L^{\nu'}(Q_0)}  \!\!\!\!\!\!\!\leq  \| L \|_{L^{\nu} \rightarrow L^{\nu}} \, \|f\|_{L^\nu(Q_0)} \,  \left ( \sum_{i}{ \mu(P_i) |[b g ]_{P_i} |^{\nu'}} \right )^{1/{\nu'}}.\\
\end{align*}
Observe that for all $i$, applying the Hölder inequality, we have
\begin{align*}
|[bg]_{P_i}| & \leq \frac{1}{\mu(P_i)} \int_{P_i}{|bg|d\mu} \leq \left ( \frac{1}{\mu(P_i)} \int_{P_i}{|b|^{\nu}d\mu}    \right )^{1/{\nu}} \left ( \frac{1}{\mu(P_i)} \int_{P_i}{|g|^{\nu'} d\mu} \right )^{1/{\nu'}}\\
& \leq C_{\{P_i\}}(b) \left ( \frac{1}{\mu(P_i)} \int_{P_i}{|g|^{\nu'} d\mu} \right )^{1/{\nu'}},\\
\end{align*}
because $\nu \leq q$. Consequently, we get
\begin{align*}
| (\mathrm{ii}) | & \leq \| L \|_{L^{\nu} \rightarrow L^{\nu}} \,  C_{\{P_i\}}(b) \, \|f\|_{L^\nu(Q_0)} \,  \left ( \sum_{i}{ \int_{P_i}{|g|^{\nu'} d\mu}} \right )^{1/{\nu'}}\\
& \leq \| L \|_{L^{\nu} \rightarrow L^{\nu}} \,C_{\{P_i\}}(b)  \|f\|_{L^\nu(Q_0)} \, \|g\|_{L^{\nu'}(Q_0)},\\
\end{align*}
where the last inequality comes from the fact that the $P_i$ are non overlapping cubes. As this argument is valid for any set of uniformly bounded coefficients $(c_Q)_Q$, this proves the unconditional convergence of the sum $\sum_{Q \in \Omega}{\langle f, \phi_{Q}^b \rangle b \phi_{Q}^b}$ in $L^{\nu}(Q_0)$, and we are done. 
\end{proof}

\begin{remark}
The operator $L$  is a sample of a perfect dyadic singular integral operator as in \cite{AHMTT}. See Section $8.1$.
\end{remark}

\smallskip

\subsection{A stopping time decomposition}We now state a stopping time lemma, already seen in \cite{AHMTT} and \cite{Hofmann}, and which is central in our argument. 

\begin{lem} \label{Hofmann}
Let $ 1< p,q < + \infty$ with dual exponents $p', q'$. Assume that $T$ is a singular integral operator with locally bounded kernel. Suppose that there exists a $(p,q)$ dyadic pseudo-accretive system $(\{ b^1_Q \},\{b^2_Q\})$ adapted to $T$. Then there exist $0< \varepsilon  \ll 1$ and $C<+\infty$ depending only on the doubling constant $C_D$, the implicit constants in \eqref{standard1} and \eqref{standard2}, and the constant $C_A$ from \eqref{accretive2} and \eqref{accretive3bis}, such that for each fixed dyadic cube $Q_0$, $R_{Q_0}$ has a partition
\[ R_{Q_0} = \Omega^1 \cup \Omega^1_{buffer} \cup (\cup R_{P_n^1}), \]
adapted to $b=b^1_{Q_0}$ as follows
\begin{enumerate}
\item The tops $\{ P_n^1 \}$ are non overlapping dyadic subcubes of $Q_0$ with
\begin{equation} \label{packing}
\sum_n{\mu(P_n^1)} \leq (1-\varepsilon)\mu(Q_0).
\end{equation}
We say that they realize a $(1 - \varepsilon)$-packing of $Q_0$.
\smallskip
\item b is strongly pseudo-accretive on $ \Omega^1$, and degenerate pseudo-accretive on $ \Omega^1_{buffer}$.
\smallskip
\item We have for all $Q \in \Omega^1 \cup \Omega^1_{buffer} \cup \{P_n\}_n$ the mean bounds

\begin{equation} \label{Hofmann3} 
\int_{Q}{\left ( {|b|}^p + {|T(b)|}^{q'} \right ) d\mu} \leq C \mu(Q).
\end{equation}

\item $\Omega^1_{buffer} = \{ Q \in R_{Q_0} \backslash \cup R_{P_n^1} \ \ | \ \  \exists n \,:  \, P_n^1 \in \widetilde{Q}     \}$, that is $\Omega^1_{buffer}$ is composed of the dpa cubes with respect to $b$, and we have, with $C_X$ as in \eqref{CX}, the $C_X$-packing property

\begin{equation}  \label{packing2}
\sum_{Q \in \Omega^1_{buffer}}{\mu(Q)} \leq C_X \mu(Q_0).
\end{equation}

\item We have the decomposition 
\begin{equation} \label{Hofmann5}
f = [ f ]_{Q_0} b + \sum_{Q \in \Omega^1}{b \Delta^{b}_{Q}f  } +  \sum_{Q \in \Omega^1_{buffer}}{\xi^b_Q f} +  \sum_{n}{(f 1_{P_n^1} - [f]_{P_n^1} b^1_{P_n^1})},
\end{equation}

where the "buffer functions" $\xi^b_Q$ are supported on $Q$, have mean zero, and take the form

\[ \xi^b_Q f = \sum_{Q' \in \widetilde{Q} \atop Q' \in \Omega^1 \cup \Omega^1_{buffer}}{a_{Q'}b 1_{Q'}} + \sum_{P_n^1 \in \widetilde{Q} }{(a'_{P_n^1}b^1_{P_n^1} - a''_{P_n^1} b 1_{P_n^1}) }, \]

where the coefficients $a_Q, a'_Q, a''_Q$ depend on $f$ and $b$, and obey the bounds

\[ \sum_{Q' \in \widetilde{Q}}{\left ( |a_{Q'}| + |a'_{Q'}| + |a''_{Q'}| \right )} \lesssim  \|f\|_{L^{\infty}(Q_0)}. \]

\end{enumerate}
A similar statement holds with $b^1_Q$, $Tb^1_Q$ and $p$, $q'$ replaced by $b^2_Q$, $T^{\ast}b^2_Q$ and $q$, $p'$.
\end{lem}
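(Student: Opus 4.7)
My plan is a Calder\'on--Zygmund stopping time construction inside $Q_0$ adapted to $b:=b^1_{Q_0}$, from which the packing, the mean bounds, the buffer structure and the $b$-adapted decomposition can all be extracted. I would fix $0<\eta\ll 1$ and $\Lambda\gg 1$, and let $\{P_n^1\}$ be the maximal dyadic cubes $P\subsetneq Q_0$ on which either (a) $|[b]_P|<\eta$, or (b) $[|b|^p]_P>\Lambda$ or $[|T(b)|^{q'}]_P>\Lambda$. By \eqref{accretive1}--\eqref{accretive3}, $Q_0$ itself satisfies $[b]_{Q_0}=1$ and $[|b|^p+|T(b)|^{q'}]_{Q_0}\leq 2C_A$, so $Q_0$ is not stopped provided $\eta<1$ and $\Lambda>2C_A$, and the $P_n^1$ are pairwise disjoint by maximality.

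The mean bounds \eqref{Hofmann3} are then immediate: on cubes $Q\in\Omega^1\cup\Omega^1_{buffer}$ failure of (b) gives the bound directly, and on each $P_n^1$ the same inequality at the dyadic parent combined with $\mu(\widehat Q)\leq C_X\mu(Q)$ from \eqref{CX} suffices. The buffer packing \eqref{packing2} follows because every $Q\in\Omega^1_{buffer}$ contains a stopping child of comparable measure, while every $Q\in\Omega^1$ has all children off the stopping list, hence $|[b]_{Q'}|\geq\eta$ for each $Q'\in\widetilde Q$ and $b$ is spa on $Q$ as required. For the refined packing \eqref{packing} I would write $G=Q_0\setminus\bigcup_n P_n^1$ and split the stopping cubes into $E_a$ (where (a) holds) and $E_b$ (only (b)); Chebyshev and \eqref{accretive2}--\eqref{accretive3} give $\mu(E_b)\lesssim\mu(Q_0)/\Lambda$, and expanding $\mu(Q_0)=\int_{Q_0}b\,d\mu$ as contributions from $G$, $E_a$, $E_b$ --- controlling the middle integral by $\eta\mu(E_a)$ via the stopping rule and the other two by H\"older with \eqref{accretive2} --- yields
\[ \mu(Q_0)\leq\eta\mu(Q_0)+C_A^{1/p}\mu(Q_0)^{1/p}\bigl(\mu(G)^{1/p'}+\mu(E_b)^{1/p'}\bigr). \]
Choosing $\Lambda$ large first, then $\eta$ small, absorbs the bad terms and forces $\mu(G)\gtrsim\mu(Q_0)$, which is \eqref{packing}. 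This is the step where the integrability exponent $p$ genuinely enters.

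The technical core is the decomposition \eqref{Hofmann5}. For each $Q\in\Omega^1_{buffer}$ I would define the buffer term
\[ \xi_Q^b f := \sum_{Q'\in\widetilde Q\cap(\Omega^1\cup\Omega^1_{buffer})} bE_{Q'}^b f\cdot 1_{Q'}+\sum_{P_n^1\in\widetilde Q}[f]_{P_n^1}b^1_{P_n^1}-bE_Q^b f\cdot 1_Q, \]
which has mean zero on $Q$ by \eqref{accretive1} applied to both $b$ and each $b^1_{P_n^1}$. Expanding $bE_Q^b f\cdot 1_Q=\sum_{Q'\in\widetilde Q}([f]_Q/[b]_Q)\,b\,1_{Q'}$ writes $\xi_Q^b f$ in the prescribed form with $a_{Q'}=[f]_{Q'}/[b]_{Q'}-[f]_Q/[b]_Q$, $a'_{P_n^1}=[f]_{P_n^1}$ and $a''_{P_n^1}=[f]_Q/[b]_Q$, each bounded by $2\|f\|_\infty/\eta$, which together with the uniform bound on $|\widetilde Q|$ gives the advertised coefficient control. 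I would then verify \eqref{Hofmann5} pointwise: for $x\in G$ the nested telescope along the dyadic ancestors of $x$ converges by Lebesgue differentiation to $f(x)-[f]_{Q_0}b(x)$, while for $x\in P_n^1$ the telescope terminates at the parent of $P_n^1$ (which is a buffer cube), producing $[f]_{P_n^1}b^1_{P_n^1}(x)-[f]_{Q_0}b(x)$, to which the stopping correction $f\,1_{P_n^1}-[f]_{P_n^1}b^1_{P_n^1}$ adds back the missing $f(x)-[f]_{P_n^1}b^1_{P_n^1}(x)$.

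The hardest step will be engineering the buffer term $\xi_Q^b f$ so that it simultaneously satisfies (i) mean zero on $Q$, (ii) the prescribed $(b\,1_{Q'},b^1_{P_n^1})$-linear structure with the $L^\infty$ coefficient control, and (iii) closes the telescope cleanly through the dpa cubes down to the stopping level; once these three points are in place, the rest is standard stopping time machinery. The symmetric statement is obtained by running the same argument with $(b^2_Q,T^\ast,q,p')$ replacing $(b^1_Q,T,p,q')$.
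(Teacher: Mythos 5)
Your construction is correct and is essentially the paper's own argument: the paper runs exactly this stopping time (stop when the average of $b$ degenerates or when $[|b|^p]_Q+[|T(b)|^{q'}]_Q$ exceeds a large constant) and cites Hofmann and AHMTT (Lemma 6.11) for the packing estimate, the buffer structure and the telescoping decomposition, all of which you have simply written out in detail, including the correct treatment of the stopping cubes $P_n^1$ via their (non-stopped) dyadic parents. One cosmetic caveat: you write $\widehat Q$ for the dyadic parent, which clashes with the paper's notation where $\widehat Q$ is the union of $Q$ and its neighbors; the bound you actually use is $\mu(\mathrm{parent}(P_n^1))\leq C_X\,\mu(P_n^1)$, which is precisely what \eqref{CX} provides.
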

%
\begin{proof}
Only minor modifications have to be made to adapt the proof to the homogeneous setting. The stopping time is simply on whether
\[ (i) \quad [|b|]_Q < \delta \] or \[(ii) \quad [ |b|^p ]_Q + [|T(b)|^{q'}]_Q >C\] for dyadic subcubes $Q$ of $Q_0$ for appropriately chosen $\delta>0$ and $C<+\infty$. We refer to \cite{Hofmann} for a proof that can be followed line by line (we do not need the control with the maximal function used there). The fact that \eqref{Hofmann3} holds also for $Q=P_n$ comes from the doubling property. The decomposition formula \eqref{Hofmann5} is identical to Lemma $6.11$ of \cite{AHMTT}.
\end{proof}
Observe that this lemma does not require \eqref{WBP1} nor \eqref{WBP2} to be satisfied. Remark as well that with our previous notation, we have $ \Omega^i \cup \Omega^i_{buffer} = \Omega(Q_0, \{P^i_n\}).$
We now introduce the stopping-time projections which are fundamental to our analysis. For $f \in L^{1}(Q_0)$, and with the notation of Lemma \ref{Hofmann}, for $i \in \{ 1,2 \}$, set
\begin{equation} \label{Pieq}
\Pi^{b^i_{Q_0}} f = f - \sum_n{(f 1_{P_n^i} - [f]_{P_n^i} b^i_{P_n^i})} = f 1_{Q_0 \backslash \cup P_n^i} + \sum_n{[f]_{P_n^i}b^i_{P^i_n}}.
\end{equation}
This operator not only depends on $b^i_{Q_0}$ but also on the stopping-time cubes $P^i_n$ and the corresponding functions $b^i_{P_n^i}$. Thus, the notation is not completely accurate but simplifies the understanding.
\begin{lem} \label{Projec}
The stopping-time projections $\Pi^{b^i_{Q_0}}$ have the following properties: for all $f \in L^{\infty}(Q_0)$, we have
\begin{equation} \label{Projec1}
 \int_{Q_0}{|\Pi^{b^i_{Q_0}}f |^{p_i} d\mu} \leq (1 + C_A) \| f\|^{p_i}_{L^{\infty}(Q_0)} \mu(Q_0), 
 \end{equation}
with $C_A$ the constant in \eqref{accretive2} and $(p_1,p_2) = (p,q)$.
\begin{equation} \label{Projec2}
[\Pi^{b^i_{Q_0}}f ]_Q = [f]_Q, 
\end{equation}
for all $Q \in \Omega^i \cup \Omega^i_{buffer} \cup \{P^i_n\}$ .
\begin{equation} \label{Projec3}
\Pi^{b^i_{Q_0}}(\Pi^{b^i_{Q_0}}f) = \Pi^{b^i_{Q_0}} f.
\end{equation}
\begin{equation}\label{Projec4}
(\Pi^{b^i_{Q_0}}f) 1_Q = \Pi^{b^i_{Q_0}}(f1_Q),
\end{equation}
for all $Q \in \Omega^i \cup \Omega^i_{buffer} \cup \{P^i_n\}$ .
\begin{equation} \label{Projec5}
 \int_{Q}{|\Pi^{b^i_{Q_0}}f |^{p_i} d\mu} \leq (1 + C_A) \| f\|^{p_i}_{L^{\infty}(Q)} \mu(Q), 
\end{equation}
for all $Q \in \Omega^i \cup \Omega^i_{buffer} \cup \{P^i_n\}$ .
\end{lem}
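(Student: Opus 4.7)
The plan is to establish each of the five properties by direct computation from the explicit formula
\[ \Pi^{b^i_{Q_0}} f = f 1_{Q_0 \backslash \cup_n P_n^i} + \sum_n{[f]_{P_n^i}b^i_{P^i_n}}, \]
using only the accretivity bounds \eqref{accretive1}, \eqref{accretive2} for the family $\{b^i_{P_n^i}\}$ and the dyadic combinatorics of the stopping cubes. The single structural observation underlying \eqref{Projec2}--\eqref{Projec5} is that any $Q \in \Omega^i \cup \Omega^i_{buffer} \cup \{P_n^i\}$ does not lie strictly inside any stopping cube $P_m^i$, so by dyadic nesting every $P_m^i$ is either disjoint from $Q$ or contained in $Q$.

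For \eqref{Projec1} and \eqref{Projec5}, I would split the integral as the sum over $Q\setminus\cup P_m^i$ (where $\Pi^{b^i_{Q_0}}f=f$) and over the $P_m^i$ contained in $Q$. On each $P_m^i$ one has $|\Pi^{b^i_{Q_0}}f|^{p_i} = |[f]_{P_m^i}|^{p_i}|b^i_{P_m^i}|^{p_i} \leq \|f\|_{L^\infty(Q)}^{p_i}|b^i_{P_m^i}|^{p_i}$, and \eqref{accretive2} gives $\int_{P_m^i}|b^i_{P_m^i}|^{p_i}\,d\mu \leq C_A\mu(P_m^i)$. Summing, and using that the $P_m^i$ are disjoint subsets of $Q$, yields the factor $(1+C_A)\mu(Q)$.

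For \eqref{Projec2}, when $Q=P_n^i$ the identity $[\Pi^{b^i_{Q_0}}f]_{P_n^i}=[f]_{P_n^i}[b^i_{P_n^i}]_{P_n^i}=[f]_{P_n^i}$ is immediate from \eqref{accretive1}. When $Q \in \Omega^i \cup \Omega^i_{buffer}$, write
\[ \int_Q \Pi^{b^i_{Q_0}}f\,d\mu = \int_{Q\setminus\cup P_m^i} f\,d\mu + \sum_{P_m^i \subset Q}[f]_{P_m^i}\int_{P_m^i}b^i_{P_m^i}\,d\mu = \int_Q f\,d\mu \]
thanks again to \eqref{accretive1}, and divide by $\mu(Q)$. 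Property \eqref{Projec3} follows formally: applying $\Pi^{b^i_{Q_0}}$ to $\Pi^{b^i_{Q_0}}f$, the first term $f1_{Q_0\setminus\cup P_n^i}$ is unchanged, and for the second term \eqref{Projec2} with $Q=P_n^i$ gives $[\Pi^{b^i_{Q_0}}f]_{P_n^i}=[f]_{P_n^i}$.

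For \eqref{Projec4}, compute both sides: for $Q\in \Omega^i\cup\Omega^i_{buffer}\cup\{P_n^i\}$,
\[ (\Pi^{b^i_{Q_0}}f)1_Q = f1_{Q\setminus\cup P_m^i} + \sum_{P_m^i\subset Q}[f]_{P_m^i}b^i_{P_m^i}, \]
while $\Pi^{b^i_{Q_0}}(f1_Q) = (f1_Q)1_{Q_0\setminus\cup P_m^i} + \sum_m[f1_Q]_{P_m^i}b^i_{P_m^i}$; the terms with $P_m^i\cap Q = \emptyset$ vanish, those with $P_m^i\subset Q$ give $[f]_{P_m^i}b^i_{P_m^i}$, and the case $Q\subsetneq P_m^i$ does not occur by the structural observation above, so the two expressions coincide. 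No step here is really an obstacle; the only point requiring care is tracking the three possible relative positions of $Q$ and the $P_m^i$ in each calculation, which the key observation disposes of.
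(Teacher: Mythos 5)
Your proof is correct and follows essentially the same route as the paper's: split the formula for $\Pi^{b^i_{Q_0}}f$ over $Q\setminus\cup P_m^i$ and the $P_m^i\subset Q$, use \eqref{accretive1} and \eqref{accretive2} for the $b^i_{P_m^i}$, and rely on the fact that a cube in $\Omega^i\cup\Omega^i_{buffer}\cup\{P^i_n\}$ is never strictly contained in a stopping cube, so dyadic nesting reduces everything to the cases $P_m^i\cap Q=\varnothing$ or $P_m^i\subset Q$. (One small refinement: in \eqref{Projec5} you correctly keep $\|f\|_{L^\infty(Q)}$ rather than $\|f\|_{L^\infty(Q_0)}$, exploiting that $\Pi^{b^i_{Q_0}}f$ on $Q$ depends only on $f|_Q$; the paper's displayed computation is slightly loose on this point.)
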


\begin{proof}
We prove first \eqref{Projec1} and \eqref{Projec5}. By \eqref{Pieq} we have $|\Pi^{b^i_{Q_0}}f |^{p_i} = |f|^{p_i} 1_{Q_0 \backslash \cup P_n^i} + \sum_n{|[f]_{P_n^i}b^i_{P^i_n}|^{p_i}}$. By \eqref{accretive2} for $b^i_{P^i_n}$ and \eqref{packing}, we have
\begin{align*} 
 \int_{Q_0}{|\Pi^{b^i_{Q_0}}f |^{p_i} d\mu} & \leq \| f\|^{p_i}_{L^{\infty}(Q_0)} \mu(Q_0) +  \| f\|^{p_i}_{L^{\infty}(Q_0)} \sum_n {\int_{Q_0}{|b^i_{P_n^i}|^{p_i}d\mu}} \\
 & \leq \| f\|^{p_i}_{L^{\infty}(Q_0)} \mu(Q_0) +  \| f\|^{p_i}_{L^{\infty}(Q_0)}C_A \sum_n{\mu(P^i_n)} \leq (1 + C_A) \| f\|^{p_i}_{L^{\infty}(Q_0)} \mu(Q_0).
\end{align*}
If $Q \in \Omega^i \cup \Omega^i_{buffer} \cup \{P^i_n\}$, then $Q$ cannot be contained in any $P^i_n$. Thus, applying again \eqref{accretive2} for $b^i_{P^i_n}$ and the disjointness of the cubes $P^i_n$, we have
\begin{align*} 
 \int_{Q}{|\Pi^{b^i_{Q_0}}f |^{p_i} d\mu} & \leq \| f\|^{p_i}_{L^{\infty}(Q_0)} \mu(Q \backslash \cup P^i_n ) +  \| f\|^{p_i}_{L^{\infty}(Q_0)} C_A \!\!\!\sum_{n; P^i_n \subset Q} {\mu(P^i_n)}  \leq (1 + C_A) \| f\|^{p_i}_{L^{\infty}(Q_0)} \mu(Q).
\end{align*}
For \eqref{Projec2}, observe that because of \eqref{accretive1} for $b^i_{P^i_n}$,
\[ \int_Q {\Pi^{b^i_{Q_0}}f  d\mu} = \int_{Q \backslash \cup P^i_n}{f d\mu} + \sum_{n; P^i_n \subset Q}{[f]_{P^i_n} \mu(P^i_n)  } = \int_Q{f d\mu},\]
and \eqref{Projec2} follows. Note that property \eqref{Projec3} is a direct consequence of \eqref{Projec2}. Only the property \eqref{Projec4} thus remains to be proved. Observe that for a cube $Q \in \Omega^i \cup \Omega^i_{buffer} \cup \{P^i_n\}$, and for any $n$, $[f1_Q]_{P^i_n}=0$ if $Q\cap P^i_n = \varnothing$ and  $[f1_Q]_{P^i_n}= [f]_{P^i_n}$ if $P^i_n \subset Q$. As we have already remarked, there is no other possibility, so that
\[ \Pi^{b^i_{Q_0}}(f1_Q) = f 1_{Q\backslash \cup P^i_n} + \sum_{n;P^i_n \subset Q}{[f]_{P^i_n} b^i_{P^i_n}} = (\Pi^{b^i_{Q_0}}f )1_Q.  \]
\end{proof}

\subsection{Sketch of the proof of Theorem \ref{AR}}
To prove Theorem \ref{AR}, we will begin by showing that we can reduce to proving that for every $f,g$ supported on a reference cube $Q_0$ and uniformly bounded by $1$, we have $|\langle \Pi^{b^2_{Q_0}}f, T( \Pi^{b^1_{Q_0}} g) \rangle| \leq C \mu(Q_0).$ This is done in Section $6$ and involves estimating the error terms; we will see that stopping-time Lemma \ref{Hofmann} plays a central part in this, and particularly \eqref{packing} and \eqref{Hofmann3}. To prove the above inequality, we apply \eqref{Hofmann5} of Lemma \ref{Hofmann} to decompose the stopping-time projectors $\Pi$ on our adapted Haar wavelet basis given by Proposition \ref{representation}. This gives us a number of matrix coefficients to estimate, which, to put it briefly, is done using the decay of the kernel of $T$ and hypotheses \eqref{accretive2}, \eqref{accretive3}, \eqref{WBP1} and \eqref{WBP2} of Theorem \ref{AR}.
\medskip

\section{Reductions}

\medskip

We now proceed to the proof of Theorem \ref{AR}. Let $T$ be a singular integral operator with locally bounded kernel on $X$, and assume that there exists a $(p,q)$ dyadic pseudo-accretive system $(\{b^1_Q\}, \{ b^2_Q \})$ adapted to $T$. It is of classical knowledge that to prove the $L^2$ boundedness of $T$, it suffices to show that there exists a constant $C<+\infty$ such that for every dyadic cube $Q$,
\begin{equation} \label{T1}
\| T 1_Q \|_{L^1(Q)} \leq C \mu(Q) \quad \mathrm{and} \quad  \| T^{\ast} 1_Q  \|_{L^1(Q)} \leq C \mu(Q).
\end{equation}
By symmetry of our hypotheses, it is enough to prove the second assertion. We give ourselves a reference cube $Q_0$, and we do this for all the cubes $Q$ contained in $Q_0$. Since $Q_0$ is arbitrary, the general case follows, as long as our constants are independent of $Q_0$. As a matter of fact, let us introduce the equivalence relation on the dyadic cubes "$Q_1 \sim Q_2 \Leftrightarrow \exists  \, Q$ dyadic cube such that $Q_1\cup Q_2 \subset Q$". It is easy to see that in a space of homogeneous type, there must be a finite number of equivalence classes for this relation. Thus, the argument which follows will give us a constant $C_i$ for each one of those equivalent classes, by letting $Q_0$ grow into one such equivalent class. But since they come in a finite number, \eqref{T1} follows.


Let us now proceed to our argument. For a dyadic cube $Q$, we denote by $E_Q$ the set of all functions supported on $Q$ and uniformly bounded by $1$. By duality, it suffices to prove that
 \[ A = \sup_{Q\subset Q_0 \atop f \in E_Q }{\left |  \frac{1}{\mu(Q)} \langle  f, T 1_Q  \rangle \right |} \lesssim 1. \]
Note that $A <+\infty$ with our a priori assumption $C = \|K\|_{L^{\infty}(Q_0 \times Q_0)} < +\infty$ and $\mu(Q_0)< +\infty$. Indeed, $|\langle f, T1_Q \rangle| \leq C \|f\|_{\infty} \mu(Q)^2$ so $A \leq C \mu(Q_0) < +\infty.$ Of course, this is not the bound we are after.

\subsection{First reduction}Let
\[ B = \sup_{Q\subset Q_0 \atop f \in E_Q }{\left |  \frac{1}{\mu(Q)} \langle  \Pi^{b^2_Q}f, T 1_Q  \rangle \right |}.  \]
\begin{lem}\label{red1}
We have $A \lesssim B + 1.$
\end{lem}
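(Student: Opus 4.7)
The strategy is to control the error introduced by replacing $f$ by its projection $\Pi^{b^2_Q} f$. From the explicit formula \eqref{Pieq},
\[ f - \Pi^{b^2_Q} f = \sum_n g_n, \qquad g_n := f 1_{P_n^2} - [f]_{P_n^2} b^2_{P_n^2}, \]
where $\{P_n^2\}$ is the stopping packing supplied by Lemma \ref{Hofmann} applied to $b=b^2_Q$. Each $g_n$ is supported on $P_n^2$ and has mean zero thanks to \eqref{accretive1}; since $\|f\|_\infty \leq 1$, the hypothesis \eqref{accretive2} gives $\|g_n\|_1 \lesssim \mu(P_n^2)$ and $\|g_n\|_q \lesssim \mu(P_n^2)^{1/q}$. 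So it suffices to show $\sum_n |\langle g_n,T1_Q\rangle| \lesssim (A+1)(1-\varepsilon)\mu(Q)$, after which \eqref{packing} and absorption close the argument.

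Fix $n$ and decompose $1_Q = 1_{P_n^2} + 1_{(\widehat{P_n^2}\cap Q)\setminus P_n^2} + 1_{Q \setminus \widehat{P_n^2}}$. For the \emph{diagonal piece}, I would further split $g_n$ into its two summands: the pairing $\langle f 1_{P_n^2}, T 1_{P_n^2}\rangle$ is bounded by $A\,\mu(P_n^2)$ by the very definition of $A$ (since $f 1_{P_n^2} \in E_{P_n^2}$), while
\[ |[f]_{P_n^2}|\,|\langle b^2_{P_n^2}, T 1_{P_n^2}\rangle| = |[f]_{P_n^2}|\,|\langle T^{\ast} b^2_{P_n^2}, 1_{P_n^2}\rangle| \leq \mu(P_n^2)^{1/p} \|T^{\ast} b^2_{P_n^2}\|_{L^{p'}(\widehat{P_n^2})} \lesssim \mu(P_n^2) \]
by H\"older and \eqref{accretive3}. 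For the \emph{near off-diagonal piece}, $(\widehat{P_n^2}\cap Q)\setminus P_n^2$ is covered by boundedly many dyadic neighbors $R$ of $P_n^2$ which are disjoint from $P_n^2$ and of comparable measure, so Lemma \ref{Hardy} (namely \eqref{Hardy1}) with $\nu=q$ yields $|\langle g_n, T 1_{R\cap Q}\rangle| \leq \|g_n\|_q\,\mu(R)^{1/q'} \lesssim \mu(P_n^2)$. For the \emph{far piece}, the vanishing mean of $g_n$ activates the dyadic standard estimate \eqref{standard estimate dyadic} and an annular summation gives $|\langle g_n, T 1_{Q\setminus \widehat{P_n^2}}\rangle| \lesssim \|g_n\|_1 \lesssim \mu(P_n^2)$.

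Assembling these three bounds, $|\langle g_n, T 1_Q\rangle| \leq (A + C_0)\mu(P_n^2)$ for an absolute constant $C_0$; summing in $n$ and applying \eqref{packing},
\[ |\langle f - \Pi^{b^2_Q}f, T 1_Q\rangle| \leq (A+C_0)(1-\varepsilon)\mu(Q). \]
Dividing by $\mu(Q)$, taking the supremum over admissible $(Q,f)$, and using that $A<+\infty$ by the a priori local boundedness of the kernel, one absorbs to obtain $\varepsilon A \leq B + (1-\varepsilon)C_0$, hence $A \lesssim B + 1$. The main obstacle is the diagonal term $\langle f 1_{P_n^2}, T 1_{P_n^2}\rangle$: it has exactly the structure of the quantity being bounded and cannot be controlled by an absolute constant, so the factor $A$ is unavoidable there; the argument only closes because the stopping packing is \emph{strict}, producing the multiplier $(1-\varepsilon)<1$ required to perform the absorption.
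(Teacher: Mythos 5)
Your proof is correct and follows essentially the same route as the paper: decompose $f-\Pi^{b^2_Q}f$ into the mean-zero pieces $g_n$, split $1_Q$ by the diagonal/near/far trichotomy around each $P_n^2$, bound the genuinely diagonal pairing $\langle f 1_{P_n^2}, T1_{P_n^2}\rangle$ by $A\mu(P_n^2)$, handle the $b^2_{P_n^2}$ part by \eqref{accretive3}, the near piece by the dyadic Hardy inequality, the far piece by \eqref{standard estimate dyadic}, and then absorb using the strict packing \eqref{packing} and $A<+\infty$. The only cosmetic difference is that you perform the three-way split of $1_Q$ at the outset while the paper first splits $\Sigma$ into $\Sigma_1$ (on $P_j^2$) and $\Sigma_2$ (off $P_j^2$) and then subdivides $\Sigma_2$.
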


\begin{proof}
Let us fix a cube $Q$ in $X$, a function $f \in E_Q$, and for the sake of simplicity, denote $b^2_Q$ by $b^2$  and $\Pi^{b^2_Q}$ by $\Pi^2$. We write 
\[ f = \Pi^2 f + \sum_{j}{f_j}, \quad \mathrm{with} \, \, f_j =  f 1_{P^2_j} - [f]_{P_j^2} b^2_{P_j^2}.  \]
We thus have $\langle f,T1_Q \rangle = \langle \Pi^2 f, T1_Q \rangle + \Sigma$, with
\[ \Sigma = \sum_{j}{\langle f_j , T 1_Q \rangle},  \]
which we decompose in two parts
\[ \Sigma_1 = \sum_{j}{\langle f_j , T 1_{P_j^2}  \rangle}  \quad \mathrm{and} \quad \Sigma_2 = \sum_j{  \langle  f_j , T 1_{Q \backslash P_j^2}  \rangle}. \]
By definition of the quantity $A$, the fact that $f1_{P^2_j} \in E_{P_j^2}$, and \eqref{packing}, we have
\[ \sum_j{| \langle  f 1_{P^2_j}, T 1_{P_j^2}  \rangle |} \leq A \sum_j{\mu(P_j^2)} \leq A (1 - \varepsilon) \mu(Q). \]
By \eqref{accretive3} for $T^{\ast}(b^2_{P^2_j})$, we also have
\[ \sum_j{| \langle   [f]_{P_j^2} b^2_{P_j^2}, T 1_{P_j^2}  \rangle |} \lesssim \sum_j{\mu(P_j^2)} \lesssim \mu(Q), \]
which takes care of the sum $\Sigma_1$.
For the sum $\Sigma_2$, we write
\[ \Sigma_2 =   \sum_j{  \langle  f_j , T 1_{Q \backslash \widehat{P_j^2}} \rangle} +  \sum_j{  \langle  f_j , T 1_{\widehat{P_j^2} \backslash P_j^2}  \rangle}. \]
As the functions $f_j$ have mean zero, a standard computation using \eqref{standard estimate dyadic} allows us to bound the first sum by $\sum {\mu(P_j^2)} \lesssim \mu(Q).$ For the second sum, \eqref{Hardy1} applied to $f_j \in L^q(Q)$ by \eqref{accretive2} for $b^2_{P^2_j}$, and $1_Q \in L^{q'}(Q)$ gives us the same bound. We have obtained
\[ | \langle f,T1_Q \rangle | \leq B\mu(Q) +A(1-\varepsilon)\mu(Q) + C\mu(Q), \]
and Lemma \ref{red1} follows using $A<+\infty$.
\end{proof}
\medskip

\subsection{Second reduction}This is where our argument departs from the ones in \cite{AHMTT} or \cite{Hofmann}. Let Q be a fixed dyadic subcube of $Q_0$. Let us consider
\[  A_Q = \sup{\left |  \frac{1}{\mu(Q')} \langle  \Pi^{b^2_Q}  f, T1_{Q'} \rangle \right |},\]
\[ B_Q =  \sup{\left |  \frac{1}{\mu(Q')} \langle  \Pi^{b^2_Q}  f, T  \Pi^{b^1_{Q'}} 1_{Q'}  \rangle \right |}, \]
where the suprema are taken over all the $b^2_Q$ pseudo-accretive subcubes $Q'$ of $Q$, and all the functions $f \in E_{Q'}$. As $Q$ is itself $b^2_Q$ pseudo-accretive, it is clear that $B \leq \sup_Q{A_Q}$. Again $A_Q<+\infty$ for each $Q$ by our qualitative assumptions.

\begin{lem}\label{red2}
We have $A_Q \lesssim B_Q + 1.$
\end{lem}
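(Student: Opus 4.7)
The plan is to mimic the proof of Lemma \ref{red1}, except that now we expand the \emph{right} argument $1_{Q'}$ using the stopping-time decomposition associated to $b^1_{Q'}$, rather than expanding the left argument $f$. Applying Lemma \ref{Hofmann} to $Q'$ with $b = b^1_{Q'}$, one obtains a family of stopping cubes $\{P^1_j\}$ realizing a $(1-\varepsilon)$-packing of $Q'$, and one writes
\[ 1_{Q'} = \Pi^{b^1_{Q'}} 1_{Q'} + \sum_j g_j, \qquad g_j := 1_{P^1_j} - b^1_{P^1_j}, \]
each $g_j$ being supported on $P^1_j$ with mean zero by \eqref{accretive1}. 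Inserting this into $\langle \Pi^{b^2_Q} f, T 1_{Q'}\rangle$, the first resulting piece is controlled directly by $B_Q \mu(Q')$, and the task reduces to bounding $\Sigma := \sum_j \langle \Pi^{b^2_Q} f, T g_j \rangle$ by $(1-\varepsilon) A_Q \mu(Q') + O(\mu(Q'))$. The resulting inequality $A_Q \le B_Q + (1-\varepsilon) A_Q + O(1)$ can then be absorbed (using that $A_Q < +\infty$ by the a priori assumption) to yield $A_Q \lesssim B_Q + 1$.

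For each $j$ I would split $\Pi^{b^2_Q} f$ according to the three regions $P^1_j$, $\widehat{P^1_j}\setminus P^1_j$, and $Q'\setminus \widehat{P^1_j}$. On the far region, the mean-zero property of $g_j$ combined with the standard dyadic estimate \eqref{standard estimate dyadic} produces a decaying kernel whose contribution, summed over the disjoint $P^1_j$, is $O(\mu(Q'))$; the $L^q$ control \eqref{Projec5} of $\Pi^{b^2_Q} f$ is enough to close this bound. On the intermediate corona $\widehat{P^1_j}\setminus P^1_j$, the dyadic Hardy inequality \eqref{Hardy1} applies with $g_j \in L^p(P^1_j)$ (since $\|g_j\|_p \lesssim \mu(P^1_j)^{1/p}$ by \eqref{accretive2} and \eqref{Hofmann3}) against $(\Pi^{b^2_Q} f) 1_{\widehat{P^1_j}\setminus P^1_j}$ estimated in $L^{p'}$ via \eqref{Projec5}, again giving $O(\mu(P^1_j))$ per $j$.

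The remaining local piece is
\[ \langle (\Pi^{b^2_Q} f) 1_{P^1_j}, T g_j \rangle = \langle (\Pi^{b^2_Q} f) 1_{P^1_j}, T 1_{P^1_j}\rangle - \langle (\Pi^{b^2_Q} f) 1_{P^1_j}, T b^1_{P^1_j} \rangle. \]
The second subpiece is handled by \eqref{accretive3} applied to $Tb^1_{P^1_j}$, followed by H\"older's inequality (and \eqref{Projec5} for the $L^{q}$-norm of $(\Pi^{b^2_Q} f) 1_{P^1_j}$), yielding $O(\mu(P^1_j))$. For the first subpiece, I would apply \eqref{Projec4} to rewrite $(\Pi^{b^2_Q} f) 1_{P^1_j} = \Pi^{b^2_Q}(f 1_{P^1_j})$, and then invoke the definition of $A_Q$ (since $f 1_{P^1_j} \in E_{P^1_j}$) to obtain the bound $A_Q \mu(P^1_j)$. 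Summation over $j$ using the packing estimate \eqref{packing} delivers the crucial factor $(1-\varepsilon)$.

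The main obstacle is verifying the side conditions that make \eqref{Projec4} and the invocation of $A_Q$ legitimate for each $P^1_j$; specifically, one must address the case where $P^1_j$ either fails to lie in $\Omega^2 \cup \Omega^2_{buffer} \cup \{P^2_n\}$, or fails to be pseudo-accretive with respect to $b^2_Q$. In such instances the argument must be refined, typically by further splitting $P^1_j$ against the $b^2_Q$-stopping family $\{P^2_n\}$, or by treating separately cubes $P^1_j$ strictly contained in some $P^2_n$ (on which $\Pi^{b^2_Q} f = [f]_{P^2_n} b^2_{P^2_n}$ is explicit, so that \eqref{accretive3} can be substituted for the $A_Q$ bound). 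These are bookkeeping refinements rather than conceptual hurdles, but they are the delicate point of the argument.
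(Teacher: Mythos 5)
Your overall structure — expanding $1_{Q'}$ via the stopping-time decomposition for $b^1_{Q'}$, using the $(1-\varepsilon)$-packing of the $P^1_j$ to produce the absorbable factor $(1-\varepsilon)A_Q$, and invoking \eqref{Projec4} together with \eqref{accretive3} for the local piece — is the paper's structure, and you have correctly identified the side condition needed to invoke $A_Q$ (namely that $P^1_j$ be pa $2$) as a point requiring a separate case (it is the paper's $\Sigma_1$ vs.\ $\Sigma_2$ split, where the $P^1_i \subset P^2_k$ case is handled via the auxiliary functions $G_k$). That part of your proposal is sound.

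The genuine gap is in your treatment of the intermediate corona $\widehat{P^1_j}\setminus P^1_j$. You propose to apply the dyadic Hardy inequality \eqref{Hardy1} to $g_j \in L^p(P^1_j)$ paired against $(\Pi^{b^2_Q} f)\, 1_{\widehat{P^1_j}\setminus P^1_j}$ "estimated in $L^{p'}$ via \eqref{Projec5}." But \eqref{Projec5} delivers an $L^q$ bound for $\Pi^{b^2_Q} f$, not an $L^{p'}$ bound, and in the regime $1/p + 1/q > 1$ one has $p' > q$, so on a finite measure set $L^q$ control does not yield $L^{p'}$ control. The dual pairing fares no better: $g_j = 1_{P^1_j} - b^1_{P^1_j}$ is only controlled in $L^p$ via \eqref{accretive2}, and $p < q'$ in the same regime, so $g_j$ is not known to lie in $L^{q'}$. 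This exponent incompatibility is precisely the obstruction the theorem must navigate when $1/p + 1/q > 1$; the naive Hardy pairing of $\Pi^{b^2_Q} f$ against $g_j$ is exactly what is unavailable.

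The paper never pairs $\Pi^{b^2_Q}f$ against $g_i$ directly. Rather, it decomposes $\Pi^{b^2_Q}(f 1_{Q'\setminus P^1_i}) = f\,1_{Q'\setminus P^1_i} - \sum_{P^2_j \cap P^1_i = \varnothing} f_j$, and then treats each term with a \emph{consistent} dual pairing. The bounded piece $f\,1_{Q'\setminus P^1_i}$ lies in $L^\infty$, hence in $L^{p'}(\widehat{P^1_i})$, and can be paired with $g_i \in L^p$ exactly as in the estimate of $\Sigma_2$ in Lemma \ref{red1}. The remaining correction pieces $f_j$ (which lie in $L^q$ via \eqref{accretive2}) are collected into $\Sigma_3$ and then paired not against $g_i$ itself but against its \emph{components} $1_{P^1_i}$ (trivially in $L^{q'}$) and $b^1_{P^1_i}$ (controlled through \eqref{accretive3} on $\widehat{P^1_i}$), using the exponent pair $(q,q')$. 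By always keeping an $L^\infty$ handle on one factor or matching $L^q$ against $L^{q'}$, the paper avoids ever needing $\Pi^{b^2_Q}f \in L^{p'}$. Your decomposition of $\Pi^{b^2_Q}f$ by region indicators loses this structure, which is why the corona estimate breaks down; you would need to adopt the paper's finer splitting to close the argument for all $p,q$.
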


\begin{proof}
As before, let us fix a pseudo-accretive subcube $Q'$ of $Q$, a function $f \in E_{Q'}$, and let us forget $Q$ and $Q'$ in our notations for the sake of simplicity. We write
\[ 1_{Q'} - \Pi^11_{Q'} = \sum_i{g_i}, \quad \mathrm{with} \, \, g_i = 1_{P^1_i} - b^1_{P^1_i}, \]
where we recall that the $P^1_i$ are given by the decomposition of $Q'$ with respect to $b^1_{Q'}$ as in Lemma \ref{Hofmann} and $\Pi^1= \Pi^{b^1_{Q'}}$.
We thus have $\langle \Pi^2 f, T1_{Q'} \rangle = \langle \Pi^2 f, T(\Pi^1 1_{Q'}) \rangle + \Sigma,$ with
\[ \Sigma = \sum_i{\langle  \Pi^2  f, T g_i  \rangle}. \]
We first consider the sum $\Sigma_1$ running over the indices $i$ such that the cubes $P_i^1$ are pa $2$ (here, $P^1_i$ pa $2$ means that $P^1_i \in \Omega^2 \cup \Omega^2_{buffer}$), and we write
\[ \Pi^2 f = \Pi^2 (f 1_{P_i^1}) + \Pi^2 (f 1_{Q' \backslash P^1_i}).  \]
We then have
\[ \langle \Pi^2 (f 1_{P_i^1}), Tg_i  \rangle =  \langle \Pi^2 (f 1_{P_i^1}), T 1_{P^1_i}   \rangle -  \langle \Pi^2 (f 1_{P_i^1}), T b^1_{P^1_i}  \rangle  \]
By definition of $A_Q$, the fact that $f 1_{P^1_i} \in E_{P_i^1} $, and \eqref{packing}, we have
\[ \sum_{P^1_i \, pa \, 2}{|  \langle \Pi^2 (f 1_{P_i^1}), T 1_{P^1_i}   \rangle |} \leq A_Q \sum_{P^1_i \, pa \, 2}{\mu(P^1_i)} \leq A_Q (1-\varepsilon) \mu(Q'). \] 
On the other hand, by \eqref{Projec4}, \eqref{Projec5}, and \eqref{accretive3} for $T(b^1_{P^1_i})$ on $P^1_i$, we have
\[  \sum_{P^1_i \, pa \, 2}{| \langle \Pi^2 (f 1_{P_i^1}), T b^1_{P^1_i}  \rangle |}  \lesssim \sum_{P^1_i \, pa \, 2}{\| \Pi^2 f \|_{L^q(P^1_i)}  \| T (b^1_{P^1_i}) \|_{L^{q'}(P^1_i)}} \lesssim \sum_{P^1_i \, pa \, 2}{\mu(P^1_i)} \lesssim \mu(Q').   \]
Finally, again for $P^1_i$ pa $2$, we have by Lemma \ref{Projec},
\[ \Pi^2 (f 1_{Q' \backslash P^1_i}) = f 1_{F \cap (Q' \backslash P^1_i)} + \sum_{P^2_j \cap P^1_i = \varnothing}{[f]_{P^2_j} b^2_{P^2_j}} = f 1_{Q' \backslash P^1_i} -  \sum_{P^2_j \cap P^1_i = \varnothing}{f_j}, \]
with $f_j = f 1_{P^2_j} - [f]_{P^2_j} b^2_{P^2_j}$ as before. The sum $\sum_{P^1_i \, pa \, 2}{|\langle  f 1_{Q' \backslash P^1_i} , T g_i  \rangle |}$ can be estimated in the same way we treated the sum $\Sigma_2$ in Lemma \ref{red1}. It thus remains to estimate the term
\[ \Sigma'_1 = - \sum_{P^1_i \, pa \, 2}{\sum_{P^2_j \cap P^1_i = \varnothing}{\langle  f_j, T g_i  \rangle}}, \]
which we defer for now.
We now estimate the sum $\Sigma_2$ running over those indices $i$ such that $P^1_i$ is not pa $2$, which means that there exists an index $k$ (unique), with $P^1_i \subset P^2_k$. For a fixed index $k$, denote 
\[G_k =  \sum_{P^1_i \subset P^2_k}{g_i}. \]
Observe that $G_k$ is supported on $P^2_k$, with mean zero and that $\int_{P^2_k}{|G_k|^p d\mu} \lesssim \mu(P^2_k)$ by \eqref{accretive3} for the $b^1_{P^1_i}$ on $P^1_i$ and $P^1_i \subset P^2_k$. Write
\[ \Pi^2 f = f 1_{Q' \backslash P^2_k} + [f]_{P^2_k}b^2_{P^2_k} - \sum_{j\neq k }{f_j}, \]
which allows us to decompose $\Sigma_2$ into three parts: $\Sigma_2 = \Sigma'_2 + \Sigma''_2 + \Sigma'''_2.$
Once more, we estimate the sum $\Sigma'_2 = \sum_k{\langle  f 1_{Q' \backslash P^2_k}, T G_k \rangle}$ as we estimated the sum $\Sigma_2$ in Lemma \ref{red1}. The sum $\Sigma''_2 = \sum_k{[f]_{P^2_k} \langle  T^{\ast}(b^2_{P^2_k}), G_k \rangle}$ is also easy to manage using the hypothesis \eqref{accretive3} for $T^{\ast}(b^2_{P^2_k})$ on $P^2_k$ and the support and size properties of the functions $G_k$. Overall, we are left to estimate the term
\begin{align*}
 \Sigma_3 = \Sigma'_1 + \Sigma'''_2 & = - \sum_{P^1_i \, pa \, 2}{\sum_{P^2_j \cap P^1_i = \varnothing}{\langle  f_j, T g_i  \rangle}} - \sum_{P^1_i \, non \, pa \, 2}{\sum_{P^2_j \cap P^1_i = \varnothing}{\langle  f_j, T g_i  \rangle}} \\
 & = - \sum_{(i,j): P^2_j \cap P^1_i = \varnothing}{\langle  f_j, T g_i  \rangle}. \\
 \end{align*}
We split the sum into two parts, depending on whether $(i,j)$ is such that $l(P^2_j) \leq l(P^1_i)$ or not. Let us consider for example the sum for the indices $(i,j)$ satisfying that condition, the other sum can be treated in a symmetric way. We want to estimate
 \[\Sigma'_3 = - \sum_{(i,j): P^2_j \cap P^1_i = \varnothing \atop l(P^2_j) \leq l(P^1_i)}{\langle  f_j, T g_i  \rangle}. \]
Fix the index $i$. Then either $P^2_j \subset \widehat{P^1_i} \backslash P^1_i$ or $P^2_j \cap  \widehat{P^1_i}  = \varnothing$ because of the relative sizes of those cubes. Set $F_i = \sum{f_j}$ where the sum runs over all the indices $j$ such that  $P^2_j \subset \widehat{P^1_i} \backslash P^1_i$ . Lemma \ref{Hardy} and the use of hypothesis \eqref{accretive3} for $T(b^1_{P^1_i})$ on $\widehat{P^1_i}$ assure that
\[ | \langle F_i , Tg_i \rangle | \leq | \langle F_i , T 1_{P^1_i} \rangle | + | \langle F_i , T b^1_{P^1_i} \rangle | \lesssim \mu(P^1_i). \]
For the indices $j$ such that $P^2_j \cap \widehat{P^1_i} = \varnothing$ we have to work a little bit more. Using that $g_i$ is of mean $0$ and applying \eqref{standard estimate dyadic}, we obtain for each concerned couple $(i,j)$
\begin{align*}
 | \langle f_j, T g_i \rangle | &  \lesssim \int_{P^1_i}{|g_i(y)|} \int_{P_j^2}{|f_j(x)| \left ( \frac{l(P^1_i)}{\rho (x,y)} \right ) ^{\alpha} \frac{1}{\lambda (x,z_{P^1_i})} d\mu(x) d\mu(y)}    \\
 & \lesssim  \int_{P^1_i}{|g_i(y)|} \sum_{k \geq 1} { \int_{P_j^2 \cap C_k (P^1_i)}{|f_j(x)| \left ( \frac{l(P^1_i)}{\delta^{-k + 1} l(P^1_i)} \right ) ^{\alpha} \frac{1}{\mu (B(z_{P^1_i}, \delta^{-k + 1} l(P^1_i)))} d\mu(x) d\mu(y)}}   \\
 & \lesssim \mu(P^1_i) \sum_{k \geq 1} { \delta^{\alpha k} \mu (B(z_{P^1_i}, \delta^{-k + 1} l(P^1_i)))^{-1}  \int_{P^2_j \cap C_k (P^1_i)}{|f_j| d\mu} }     ,\\ 
 \end{align*}
where $z_{P^1_i}$ denotes the center of the cube $P^1_i$ and 
\[ C_k (P^1_i) = \{ x \in X | \delta^{-k + 1} l(P^1_i) \leq \rho (x,z_{P^1_i}) < \delta^{-k} l(P^1_i) \}. \]
Now, remark that since $l(P^2_j) \leq l(P^1_i)$, for each $j$ there are at most two sets $C_k(P^1_i)$ non disjoint with $P^2_j$. Thus we can sum over $j$ with $k$ fixed, and as the functions $f_j$ are supported on the cubes $P^2_j$ with $\int{\!  |f_j|} \lesssim \mu (P^2_j),$ we get
\[\sum_j{\int_{P^2_j \cap C_k (P^1_i)}{|f_j| d\mu}} \lesssim \sum_{P^2_j \cap C_k(P^1_i) \neq \varnothing}{\mu (P^2_j)} \lesssim \mu(C_k(P^1_i)). \]
Then, summing over $k$, we have
\begin{align*}
\sum_{j: P^2_j \cap \widehat{P^1_i} = \varnothing}{|\langle f_j, T g_i \rangle |}&  \lesssim \sum_{k \geq 1}{\delta^{\alpha k} \mu (B(z_{P^1_i}, \delta^{-k + 1} l(P^1_i)))^{-1} \mu(C_k(P^1_i)) \mu(P^1_i)} \\
& \lesssim  \sum_{k \geq 1}{\delta^{\alpha k} \mu(P^1_i) } \lesssim \mu(P^1_i).\\
\end{align*}
It remains only to sum over $i$ to get $|\Sigma_3| \lesssim \mu(Q')$. To summarize, we have obtained
\[ | \langle \Pi^2 f , T1_{Q'}  \rangle | \leq B_Q\mu(Q') + A_Q(1- \varepsilon)\mu(Q') +C\mu(Q')   ,\]
and Lemma \ref{red2} follows using $A_Q <\infty$.
\end{proof}
\begin{remark}
Observe that this argument does not require the properties \eqref{WBP1} and \eqref{WBP2} to be satisfied. We used \eqref{accretive3} (and not \eqref{accretive3bis}) once. No further conditions on $p,q$ are required.
\end{remark}

\medskip

As a consequence of what precedes, we will be done if we prove there exists a constant $C<+\infty$ such that
\[ |\langle \Pi^{b^2_Q}f , T(\Pi^{b^1_{Q'}}g)  \rangle | \leq C \mu(Q') \]
for all dyadic subcube $Q$ of $Q_0$, all $b^2_Q$ pseudo-accretive subcube $Q'$ of $Q$, and all $f,g \in E_{Q'}$. By \eqref{Projec4} of Lemma \ref{Projec}, if $Q'$ is a $b^2_Q$ pseudo-accretive subcube of $Q$ and $f,g \in E_{Q'}$, we have
\[ \Pi^{b^2_Q}f = \Pi^{b^2_Q}(f1_{Q'}) = (\Pi^{b^2_Q}f)1_{Q'} = f 1_{Q' \backslash (\cup{P^2_j})} + \sum_{P^2_j \subset Q'}{[f]_{P^2_j} b^2_{P^2_j}}, \]
and
\[ \Pi^{b^1_{Q'}}g = g 1_{Q' \backslash (\cup{P^1_i})} + \sum_{P^1_i \subset Q'}{[g]_{P^1_i} b^1_{P^1_i}}.    \]
Thus, the expressions reduce to partitions of $Q'$ by dyadic subcubes and possibly the complement of their union. From this point on, we no longer use the $(1-\varepsilon)$-packing property \eqref{packing} in Lemma \ref{Hofmann} and the $1$-packing property suffices, that is we do not care if $Q' \backslash (\cup P_j^2) = \varnothing$, which is a possibility. This means that $Q'$ can become our reference cube, and $b^2_Q$ could be as well replaced by any function $b^2$ for which the non pseudo-accretive cubes are the $P^2_j \subset Q'$ and Lemma \ref{Hofmann} holds with $(1-\varepsilon)$ replaced by $1$ in \eqref{packing}. To simplify notation, we shall do this and set $Q'=Q_0$, and even further assume it is of generation $0$. We have to prove
\begin{equation} \label{reduction}
|\langle \Pi^2 f, T(\Pi^1g) \rangle | \leq C \mu(Q_0)
\end{equation}
for any $f,g \in E_{Q_0}$ where $\Pi^i = \Pi^{b^i_{Q_0}},$ and Lemma \ref{Hofmann} holds for both with the $(1-\varepsilon)$ of \eqref{packing} replaced by $1$. For simplicity, we denote $b^i_{Q_0}$ by $b^i$. We shall also say $Q$ spa $i$ (resp. dpa $i$) if $Q \in \Omega^i$ (resp. $Q \in \Omega^i_{buffer}$). We also set $\Delta_Q^i = \Delta^{b^i}_{Q}$ and $\xi^i_Q = \xi^{b^i}_Q$.

\medskip

\section{BCR algorithm and end of the proof}

\medskip

We intend to prove \eqref{reduction} with the simplification of notation.

\subsection{Representation of projections}

By the decomposition formula \eqref{Hofmann5} of Lemma \ref{Hofmann}, we have
\[ \Pi^{i} f = [ f ]_{Q_0} b^i + \sum_{Q \, \mathrm{spa} \, i}{b^i \Delta^{i}_{Q}f} + \sum_{Q \, \mathrm{dpa} \, i}{\xi^{i}_Q f}. \]
Set $E_0^{i} f =  [ f ]_{Q_0} b^i$, and $\forall j \geq 0$,
\[ D^{i}_j f =  \sum_{Q \, \mathrm{spa} \, i \atop l(Q) = \delta^j}{b^i \Delta^{i}_{Q}f} + \sum_{Q \, \mathrm{dpa} \, i \atop l(Q) = \delta^j}{\xi^{i}_Q f} =  \sum_{Q \, \mathrm{spa} \, i \atop l(Q) = \delta^j}{\langle f, \phi_Q^i \rangle b^i \phi_Q^i} + \sum_{Q \, \mathrm{dpa} \, i \atop l(Q) = \delta^j}{\xi^{i}_Q f} \]
\[ E_j^{i} f = E_0^{i} f + \sum_{l=0}^{j-1}{D_l^{i} f}. \]
\begin{lem} \label{Pi}
With the previous notation, we have 
\begin{equation} \label{Ej}
E_j^{i} f = \sum_{Q \, \mathrm{pa} \, i \atop l(Q) = \delta^j}{\frac{[f]_Q}{[b^i]_Q}b^i1_{Q}} + \sum_{P_n^i \atop l(P^i_n) \geq \delta^j}{[f]_{P^i_n}b^i_{P^i_n}} \, .
\end{equation}
Furthermore, we have when $f\in L^{\infty}(Q_0)$
\begin{equation} \label{L1}
\Pi^{i} f =  \lim_{j \rightarrow + \infty}{E_j^{i}f},
\end{equation}
with the convergence taking place in $L^1(Q_0)$.
\end{lem}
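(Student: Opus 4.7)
The plan is to prove \eqref{Ej} by induction on $j \geq 0$, then deduce \eqref{L1} from pointwise a.e.\ convergence together with dominated convergence. For the base case $j=0$, recall that under our running reduction $Q_0$ is pseudo-accretive with $[b^i]_{Q_0}=1$ (by \eqref{accretive1}), and no stopping cube $P_n^i$ has $l(P_n^i) \geq l(Q_0) = 1$, so the right-hand side of \eqref{Ej} collapses to $\frac{[f]_{Q_0}}{[b^i]_{Q_0}} b^i 1_{Q_0} = [f]_{Q_0} b^i = E_0^i f$.

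For the inductive step, I would show that $D_j^i f$ precisely replaces each term $\frac{[f]_Q}{[b^i]_Q} b^i 1_Q$ in \eqref{Ej} at scale $\delta^j$ by its children contributions at the next level. For $Q$ spa $i$, all children $Q'\in\widetilde{Q}$ are pa $i$, and direct expansion of $\Delta_Q^{b^i}$ yields
\[ b^i \Delta_Q^i f = \sum_{Q' \in \widetilde{Q}} \frac{[f]_{Q'}}{[b^i]_{Q'}} b^i 1_{Q'} - \frac{[f]_Q}{[b^i]_Q} b^i 1_Q, \]
which is the desired replacement. For $Q$ dpa $i$, the children $\widetilde{Q}$ split into pa-$i$ cubes and stopping cubes $P_n^i$ of length $\delta^{j+1}$; writing $\frac{[f]_Q}{[b^i]_Q} b^i 1_Q = \sum_{Q'\in\widetilde{Q}} \frac{[f]_Q}{[b^i]_Q} b^i 1_{Q'}$ and matching against the desired replacement forces the coefficients in $\xi_Q^i f$ of Lemma \ref{Hofmann} to be $a_{Q'} = \frac{[f]_{Q'}}{[b^i]_{Q'}} - \frac{[f]_Q}{[b^i]_Q}$, $a'_{P_n^i}=[f]_{P_n^i}$, $a''_{P_n^i}=\frac{[f]_Q}{[b^i]_Q}$. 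The mean-zero property of $\xi_Q^i f$ then follows from \eqref{accretive1} by direct computation, and the claimed $\|f\|_\infty$-bound on the coefficients is immediate from $|[b^i]_{Q'}|\gtrsim 1$ on pa cubes. Stopping cubes with $l(P_n^i) > \delta^{j+1}$ are unaffected by $D_j^i f$ and carry over from \eqref{Ej} at level $j$ to level $j+1$.

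For \eqref{L1}, I first establish pointwise a.e.\ convergence, separately on $\cup P_n^i$ and on $Q_0 \setminus \cup P_n^i$. On each $P_n^i$, once $\delta^j \leq l(P_n^i)$, formula \eqref{Ej} yields $E_j^i f = [f]_{P_n^i} b^i_{P_n^i}$ on $P_n^i$, which is exactly $\Pi^i f$, so the sequence is eventually constant there. On $Q_0 \setminus \cup P_n^i$, the unique dyadic subcube $Q_j(x) \subset Q_0$ of length $\delta^j$ containing $x$ cannot lie inside any $P_n^i$ (as the $P_n^i$ are maximal non-pa cubes), hence is pa $i$, and $E_j^i f(x) = \frac{[f]_{Q_j(x)}}{[b^i]_{Q_j(x)}} b^i(x)$; dyadic Lebesgue differentiation on the doubling space $(X,\rho,\mu)$ (trivial at atoms since eventually $Q_j(x)=\{x\}$, standard otherwise) then gives $E_j^i f(x) \to f(x) = \Pi^i f(x)$ for a.e.\ such $x$. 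Dominated convergence closes the argument via
\[ |E_j^i f(x)| \lesssim \|f\|_\infty \Big( |b^i(x)| + \sum_n |b^i_{P_n^i}(x)| 1_{P_n^i}(x) \Big), \]
an $L^1(Q_0)$ bound thanks to \eqref{accretive2}, H\"older's inequality (since $\mu(Q_0) < +\infty$) and the packing \eqref{packing}.

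The main obstacle I expect is the dpa part of the inductive step: Lemma \ref{Hofmann} only specifies the form of $\xi_Q^i f$ and a crude bound on its coefficients, so one has to pin these coefficients down in closed form in terms of local averages of $f$ and $b^i$, and independently check that this choice is consistent with the mean-zero assertion and the coefficient-size assertion of Lemma \ref{Hofmann}.
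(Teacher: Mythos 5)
Your proposal is correct and follows essentially the same route as the paper: \eqref{Ej} by direct computation (the paper calls it a "linear algebra computation" and omits it; you supply the induction, which is the natural way to organize it), then a.e.\ convergence on $\cup P_n^i$ and on its complement via dyadic Lebesgue differentiation, closed by dominated convergence with the same $L^1$ envelope $\|f\|_\infty\bigl(C^{-1}|b^i| + \sum_n|b^i_{P_n^i}|\bigr)$. Your identification of the buffer-coefficient values and the ensuing mean-zero/size checks are consistent with the explicit construction of $\xi_Q^b$ in the cited reference, so there is no circularity; the paper simply treats that bookkeeping as implicit.
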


\begin{proof} 
The first part of Lemma \ref{Pi} is a linear algebra computation, we will omit the detail. The second assertion is an application of the dominated convergence theorem in $L^1(Q_0)$. Observe first that, by \eqref{Ej}, we have
\[ |E_j^{i}f | \leq C^{-1} |b^i| + \sum_{n}{|b^i_{P^i_n}|}, \]
where $C = \inf{|[b]_Q|}$, the infimum taken over the pa $i$ cubes $Q$. To see that $E_j^i f$ converges $\mu$-a.e. on $Q_0$ towards $\Pi^i$, use again \eqref{Ej} and observe that for every $n$, $E_j^i f$ is constant on $P^i_n$ for $j$ large enough (depending on $n$), and equal to $[f]_{P^i_n}b^i_{P^i_n}$. If $x \in F^i = Q_0 \backslash \cup{P^i_n}$, then by the Lebesgue differentiation theorem $\frac{[f]_Q}{[b]_Q}$ tends $\mu$-a.e. towards $\frac{f(x)}{b(x)}$ when $Q$ tends towards $x$. Thus $E^i_j f$ tends $\mu$-a.e. towards $f$ on $F^i$, and, by \eqref{Pieq}, $E^i_j f$ tends $\mu$-a.e. towards $\Pi^i f$ on $Q_0$. It remains only to apply the dominated convergence theorem in $L^1(Q_0)$ to get \eqref{L1}.

\end{proof}


Now, because of \eqref{L1}, and because we assumed the kernel of the operator $T$ to be locally bounded, we can take the limit and write
\begin{align*}
&  \langle \Pi^2 f, T \Pi^1 g \rangle  = \lim_{j \rightarrow +\infty}{ \langle E_j^2 f  , TE_j^1 g  \rangle} \\
 & = \langle E_0^2 f, TE_0^1 g \rangle + \sum_{j \geq 0}{\left ( \langle D_j^2 f, TD_j^1 g \rangle + \langle D_j^2 f, TE_j^1 g \rangle + \langle E_j^2 f, TD_j^1 g \rangle \right )}\\
 & = \langle E_0^2 f, TE_0^1 g \rangle + \langle f,Ug \rangle  +  \langle f,Vg \rangle + \langle f,Wg \rangle.
\end{align*}
This is the so called BCR algorithm, introduced in \cite{BCR} for the classical martingale differences. Here we use the adapted martingale differences. Let us remark that the first term is easy to estimate because $f,g \in E_{Q_0}$, and because of \eqref{accretive2}, \eqref{accretive3}. Remark also that the two last terms $V,W$ can be treated in the same way by duality. They lead to paraproduct type operators. The diagonal term $U$ is relatively easy to treat. With this decomposition and the wavelet representation, we will obtain matrix coefficients of the form $\langle b^2 \theta_Q , T(b^1 \theta_R) \rangle$, where the functions $\theta_Q, \theta_R$ are supported on the cubes $Q, R$ respectively. The idea is mostly to use the kernel decay and the standard Calder\'on-Zygmund estimates \eqref{standard estimate dyadic} when the cubes $Q,R$ are far away, and to use the weak boundedness properties \eqref{WBP1}, \eqref{WBP2} when they are close. \\

Before tackling those estimations, let us first introduce some more notation. For $Q$ and $R$ two dyadic cubes of the space of homogeneous type $X$, set
\[ \mu(Q,R) = \mu(R,Q) = \inf_{x \in Q, y \in R}\left \{ \mu(B(x,\rho(x,y))), \mu(B(y,\rho(x,y))) \right \}, \]
and
\[ \alpha_{Q,R} = \alpha_{R,Q} = \begin{cases}
 \mu(Q)^{\frac{1}{2}} \mu(R)^{\frac{1}{2}} \left ( {\frac{\inf(l(Q),l(R))}{\rho(Q,R)}} \right )^{\alpha} \frac{1}{\mu(Q,R)}   & \mathrm{if} \ \ \rho(Q,R) \geq \sup(l(Q), l(R)) \\
1  & \mathrm{if} \ \ l(Q) = l(R), \ \ \mathrm{and} \ \ \rho(Q,R) < l(Q).
 \end{cases}\\ \]
Of course $\alpha_{Q,R}$ is not defined on all pairs of cubes $(Q,R)$, but that does not matter as we will only use this notation when we are in one of the above cases. We will frequently use some coefficient estimates to bound the terms of the form $\langle b^2 \theta_Q , T(b^1 \theta_R) \rangle$ we evoked earlier, and we will constantly refer to Appendix $B$ for the detail of these estimates. 

\medskip

\subsection{Analysis of $U$}First, let us decompose
\[ \langle f,Ug \rangle =  \sum_{j \geq 0}{ \langle D_j^2 f, T D_j^1 g} \rangle =  \langle f,U_1 g \rangle + \langle f,U_2 g \rangle + \langle f,U_3 g \rangle + \langle f,U_4 g \rangle, \]
with 
\begin{align*}
\langle f,U_1 g \rangle & =  {\sum_{Q \, \mathrm{spa} \, 2 ; R \, \mathrm{spa} \, 1 \atop l(R) = l(Q)}{\langle f, \phi_Q^2 \rangle \langle b^2 \phi_Q^2 , T(b^1 \phi_R^1 ) \rangle \langle g, \phi_R^1 \rangle}},  \\
\medskip
\langle f,U_2 g \rangle & =   {\sum_{Q \, \mathrm{spa} \, 2 ; R \, \mathrm{dpa} \, 1 \atop l(R) = l(Q)}{\langle f, \phi_Q^2 \rangle  \langle b^2 \phi_Q^2 , T(\xi^1_R g)\rangle}},  \\
\medskip
\langle f,U_3 g \rangle & =   {\sum_{Q \, \mathrm{dpa} \, 2 ; R  \, \mathrm{spa} \, 1 \atop l(R) = l(Q)}{\langle \xi^2_Q f, T(b^1  \phi_R^1 )\rangle \langle g, \phi_R^1 \rangle}}, \\
\medskip
\langle f,U_4 g \rangle & =   {\sum_{Q  \, \mathrm{dpa} \, 2 ; R  \, \mathrm{dpa} \, 1 \atop l(R) = l(Q)}{\langle \xi^2_Q f, T(\xi^1_R g)\rangle}}.\\
\end{align*}
Observe that the sums with respect to $j$ have disappeared once we notice they force the cubes $Q$ and $R$ to have equal lengths. 

\subsubsection*{\textbf{Estimate of $\langle f,U_1 g \rangle $}}We refer to Appendix $B$ for the detail, but, applying  \eqref{eqn:1} of Lemma \ref{estimates}, we have
\[ | \langle f,U_1 g \rangle | \lesssim {\sum_{Q \, \mathrm{spa} \, 2 ; R \, \mathrm{spa} \, 1 \atop l(R) = l(Q)}{ \alpha_{Q,R} | \langle f, \phi_Q^2 \rangle | \, |\langle g, \phi_R^1 \rangle |  }}. \]
Remember that by Lemma \ref{Mart}
\[ \sum_{Q \, \mathrm{spa} \, 2}{|\langle f, \phi_Q^2 \rangle |^2} \lesssim  \|f\|^2_2 \lesssim \mu(Q_0),  \]
and similarly for $\langle g , \phi_R^1 \rangle $. Therefore, we have by the Cauchy-Schwarz inequality
\begin{align*} 
| \langle f,U_1 g \rangle | & \lesssim {\sum_{Q \, \mathrm{spa} \, 2 ; R \, \mathrm{spa} \, 1 \atop l(R) = l(Q) }{ \alpha_{Q,R} \, | \langle f, \phi_Q^2 \rangle | \, |\langle g, \phi_R^1 \rangle |  }} \\
& \lesssim   { \left ( \sum_{Q \, \mathrm{spa} \, 2 ; R \, \mathrm{spa} \, 1 \atop l(R) = l(Q) }{ \!\!\!\!\!\!\!\!\! \alpha_{Q,R} \left( \frac{\mu(R)}{\mu(Q)} \right)^{1/2} | \langle f, \phi_Q^2 \rangle |^2  }  \right )^{1/2} \!\! \left ( \sum_{Q \, \mathrm{spa} \, 2 ; R \, \mathrm{spa} \, 1 \atop l(R) = l(Q) }{\!\!\!\!\!\!\!\!\!  \alpha_{Q,R} \left( \frac{\mu(Q)}{\mu(R)} \right)^{1/2} |\langle g, \phi_R^1 \rangle |^2   } \right )^{1/2}  }\\
\end{align*}
Let us state a lemma that will handle those sums.
\begin{lem}\label{Calcul1}
Let $R$ be a fixed dyadic cube of the space of homogeneous type $X$. We have the following summing property
\[ \sum_{Q ; l(Q)=l(R) \atop \rho(Q,R) \geq l(R)}{\mu(Q) \left ( \frac{l(Q)}{\rho(Q,R)} \right )^{\alpha} \frac{1}{\mu(Q,R)} } \lesssim 1.  \]
Furthermore, if $p \in \N$, we have the stronger summing property
\[ \sum_{Q ; l(Q)= \delta^p l(R) \atop \rho(Q,R) \geq l(R)}{\mu(Q) \left ( \frac{l(Q)}{\rho(Q,R)} \right )^{\alpha} \frac{1}{\mu(Q,R)} } \lesssim \delta^{p \alpha},  \]
which, by summing over $p \in \N$, yields
\[ \sum_{Q ; l(Q) \leq l(R) \atop \rho(Q,R) \geq l(R)}{\mu(Q) \left ( \frac{l(Q)}{\rho(Q,R)} \right )^{\alpha} \frac{1}{\mu(Q,R)} } \lesssim 1.  \]
The exponent $\alpha$ can be replaced by any $\alpha' >0$. 
\end{lem}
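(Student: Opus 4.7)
\bigskip

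\textbf{Proof plan for Lemma \ref{Calcul1}.} The plan is to decompose the collection of admissible cubes $Q$ into annular shells centered on $R$ and then sum a geometric series. Concretely, for each $k\geq 0$ let
\[ \mathcal{A}_k = \left\{ Q \text{ dyadic} \,:\, l(Q) = \delta^p l(R),\ \delta^{-k}l(R) \leq \rho(Q,R) < \delta^{-k-1}l(R) \right\}, \]
so that the admissible cubes are partitioned as $\bigcup_{k\geq 0} \mathcal{A}_k$. The first step is to observe that on $\mathcal{A}_k$ the three relevant quantities are essentially frozen: for $Q\in\mathcal{A}_k$ and for any $x\in Q$, $y\in R$, one has $\rho(x,y)\approx \delta^{-k}l(R)$, and since $y$ is at distance at most $C_1 l(R) \ll \delta^{-k}l(R)$ from the center $z_R$ of $R$, the ball $B(y,\rho(x,y))$ both contains and is contained in balls centered at $z_R$ of radius comparable to $\delta^{-k}l(R)$. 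By the doubling property this yields
\[ \mu(Q,R) \approx \mu\!\left(B(z_R,\delta^{-k}l(R))\right), \]
uniformly in $Q\in\mathcal{A}_k$.

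The second step is to control $\sum_{Q\in\mathcal{A}_k} \mu(Q)$. All such cubes are contained in $B(z_R, C\delta^{-k-1}l(R))$ for some absolute constant $C$ (since their diameters are bounded by $C_1 \delta^p l(R) \leq C_1 l(R)$ and their distance to $R$ is at most $\delta^{-k-1}l(R)$), and because dyadic cubes of a fixed generation are pairwise disjoint, we get
\[ \sum_{Q\in\mathcal{A}_k} \mu(Q) \leq \mu\!\left(B(z_R,C\delta^{-k-1}l(R))\right) \lesssim \mu\!\left(B(z_R,\delta^{-k}l(R))\right), \]
again by the doubling property. Combining with the previous step and with the bound $l(Q)/\rho(Q,R) \leq \delta^{p+k}$ gives
\[ \sum_{Q\in\mathcal{A}_k}\!\!\mu(Q)\!\left(\frac{l(Q)}{\rho(Q,R)}\right)^{\!\alpha}\!\!\frac{1}{\mu(Q,R)} \lesssim \delta^{(p+k)\alpha}\,\frac{\mu(B(z_R,C\delta^{-k-1}l(R)))}{\mu(B(z_R,\delta^{-k}l(R)))} \lesssim \delta^{(p+k)\alpha}. \]

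The third step is just to sum this geometric series in $k\geq 0$, which gives $\lesssim \delta^{p\alpha}$, proving the second (stronger) assertion. Setting $p=0$ yields the first assertion, and further summing over $p\geq 0$ yields the third. The exponent $\alpha$ played no special role beyond being strictly positive, so it may be replaced by any $\alpha'>0$. There is no serious obstacle here: the whole argument is a doubling-plus-geometric-series calculation, and the only mildly delicate point is verifying that the two endpoints $x,y$ used to define $\mu(Q,R)$ yield balls comparable to $B(z_R,\delta^{-k}l(R))$, which is where the separation hypothesis $\rho(Q,R)\geq l(R)$ is essential.
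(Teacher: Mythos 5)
Your proof is correct and takes essentially the same route as the paper: both decompose the sum into annular shells $\rho(Q,R)\sim\delta^{-k}l(R)$, use doubling to compare $\mu(Q,R)$ with $\mu(B(z_R,\delta^{-k}l(R)))$, exploit disjointness of same-generation cubes inside a comparable ball to bound $\sum_{Q\in\mathcal{A}_k}\mu(Q)$, and then sum the resulting geometric series $\sum_k \delta^{(p+k)\alpha}$.
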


\begin{proof}
It is enough to prove the second inequality. The idea is to split the sum for $\delta^{-m}l(R) \leq \rho(Q,R) < \delta^{-m-1}l(R), m \in \N$ (here and subsequently, we then write that $\rho(Q,R) \sim \delta^{-m}l(R)$):  write
\begin{align*}
\sum_{Q ; l(Q)= \delta^p l(R) \atop \rho(Q,R) \geq l(R)}{\mu(Q) \left ( \frac{l(Q)}{\rho(Q,R)} \right )^{\alpha} \frac{1}{\mu(Q,R)} } & = \delta^{p\alpha} \sum_{m \geq 0} \!\!\!\!  \sum_{Q ; l(Q)= \delta^p l(R) \atop \rho(Q,R) \sim \delta^{-m} l(R)}{\!\!\!\!\!\!\! \mu(Q) \left ( \frac{l(R)}{\rho(Q,R)} \right )^{\alpha} \frac{1}{\mu(Q,R)}} \\
& \lesssim  \delta^{p\alpha} \sum_{m \geq 0}{\delta^{m\alpha} } \!\!\!  \!\!\!\!   \sum_{Q ; l(Q)= \delta^p l(R) \atop \rho(Q,R) \sim \delta^{-m} l(R)}{\!\!\frac{\mu(Q)}{\mu(Q,R)}}\\
& \lesssim  \delta^{p\alpha} \sum_{m \geq 0}{\delta^{m\alpha} } \!\!\!  \!\!\!\!   \sum_{Q ; l(Q)= \delta^p l(R) \atop \rho(Q,R) \sim \delta^{-m} l(R)}{\!\!\frac{\mu(Q)}{\mu(B(z_R, \delta^{-m} l(R)))}},\\
\end{align*}
where $z_R$ denotes the center of the cube $R$. All those cubes $Q$ are non overlapping as they are of the same generation, and they are all contained in a ball of comparable measure to the ball $B(z_R, \delta^{-m} l(R))$. We therefore have
\[ \sum_{Q ; l(Q)= \delta^p l(R) \atop \rho(Q,R) \sim \delta^{-m} l(R)}{\!\frac{\mu(Q)}{\mu(B(z_R, \delta^{-m} l(R)))}} \lesssim 1, \]
and summing then over $m \in \N$, we get the desired estimate. Moreover, it is clear that $\alpha$ can be replaced by any $\alpha' >0$.
\end{proof}

\noindent Now let us go back to our argument. For a fixed cube $R$, we write
\[ \sum_{Q \, \mathrm{spa} \, 2  \atop l(Q) = l(R) }{ \!\!\! \alpha_{Q,R} \left( \frac{\mu(Q)}{\mu(R)} \right)^{1/2}   }  =  \sum_{Q \, \mathrm{spa} \, 2 ; l(Q)=l(R) \atop \rho(Q,R) \geq l(R) }{ \!\!\! \mu(Q) \left ( \frac{l(Q)}{\rho(Q,R)} \right )^{\alpha} \frac{1}{\mu(Q,R)}} \, \, + \!\!\!\!\!\! \sum_{Q \, \mathrm{spa} \, 2 ; l(Q)=l(R) \atop \rho(Q,R) < l(R) }{ \!\!\!  \left( \frac{\mu(Q)}{\mu(R)} \right)^{1/2}   }.\]
Lemma \ref{Calcul1} insures that the first sum is uniformly bounded, and the second sum is bounded as well, because such cubes $Q$ and $R$ have comparable measure (they are neighbors), and any given cube $R$ has a uniformly bounded number of neighbors. 
We thus have for any given cube $R$,
\[ \sum_{Q \, \mathrm{spa} \, 2  \atop l(Q) = l(R) }{ \!\!\! \alpha_{Q,R} \left( \frac{\mu(Q)}{\mu(R)} \right)^{1/2}   } \lesssim 1, \]
and similarly, for any given cube $Q$, 
\[ \sum_{R \, \mathrm{spa} \, 1  \atop l(R) = l(Q) }{ \!\!\! \alpha_{Q,R} \left( \frac{\mu(R)}{\mu(Q)} \right)^{1/2}   } \lesssim 1. \]
Therefore,
\begin{align*}
| \langle f,U_1 g \rangle | & \lesssim  { \left ( \sum_{Q \, \mathrm{spa} \, 2  }{| \langle f, \phi_Q^2 \rangle |^2} \right )^{1/2} \left (  \sum_{R \, \mathrm{spa} \, 1 }{|\langle g, \phi_R^1 \rangle |^2}  \right )^{1/2}} \lesssim  \mu(Q_0).
\end{align*}

\subsubsection*{\textbf{Estimate of $\langle f, U_2 g \rangle$}}Remark that by duality, $\langle f, U_3 g \rangle$ is estimated in the same way. Refering to estimate \eqref{eqn:2} of Lemma \ref{estimates}, we have
\[ | \langle f, U_2 g \rangle | \lesssim  {\sum_{Q \, \mathrm{spa} \, 2 ; R \, \mathrm{dpa} \, 1 \atop l(R) = l(Q) }{\alpha_{Q,R} \, \mu(R)^{\frac{1}{2}} \, | \langle f, \phi_Q^2 \rangle | }}.  \]
We split this sum into two parts, depending on whether $\rho(Q,R) \geq l(Q)$ or not. By the Cauchy-Schwarz inequality and Lemma \ref{Calcul1}, we have
\begin{align*}
{\sum_{Q \, \mathrm{spa} \, 2 ; R \, \mathrm{dpa} \, 1 \atop{ l(R) = l(Q) \atop \rho(Q,R) \geq l(Q)}}{\!\!\!\!\!\alpha_{Q,R} \, \mu(R)^{\frac{1}{2}} \,  | \langle f, \phi_Q^2 \rangle |}} & \lesssim 
\left (\sum_{Q \, \mathrm{spa} \, 2 }{\!\!\! | \langle f, \phi_Q^2 \rangle |^2}\!\!\!\!\!\!\! \sum_{R \, \mathrm{dpa} \, 1; l(R)=l(Q)  \atop  \rho(Q,R) \geq l(Q) }{\!\!\!\!\!\!\!\mu(R) \left ( \frac{l(R)}{\rho(Q,R)}\right ) ^{\alpha} \frac{1}{\mu(Q,R)}} \right ) ^{1/2} \\
& \times \left (\sum_{R \, \mathrm{dpa} \, 1}{\!\!\mu(R)} \!\!\!\!\!\!\! \sum_{Q \, \mathrm{spa} \, 2 ; l(Q) = l(R) \atop  \rho(Q,R) \geq l(R) }{\!\!\!\!\!\!\!\mu(Q) \left ( \frac{l(Q)}{\rho(Q,R)}\right ) ^{\alpha} \frac{1}{\mu(Q,R)}} \right ) ^{1/2}\\
& \lesssim \left ( \sum_{Q \, \mathrm{spa} \, 2  }{ | \langle f, \phi_Q^2 \rangle |^2} \right ) ^{1/2} \times \left ( \sum_{R \, \mathrm{dpa} \, 1 }{\mu(R)} \right ) ^{1/2} \lesssim \mu(Q_0),
\end{align*}
the last inequality being a consequence of the fact that $\sum_{R \, \mathrm{dpa} \, 1  } {\mu(R)} \lesssim \mu(Q_0).$ The remaining sum is easy to estimate: since for a fixed $Q$ there is a uniformly bounded number of neighbors $R$, one can write
\begin{align*}
{\sum_{Q \, \mathrm{spa} \, 2 ; R \, \mathrm{dpa} \, 1 \atop{ l(R) = l(Q) \atop \rho(Q,R) < l(Q) }}{\!\!\!\!\! \mu(R)^{\frac{1}{2}} \,  | \langle f, \phi_Q^2 \rangle |}}  & \lesssim 
 \left ( \sum_{Q \, \mathrm{spa} \, 2} \sum_{R \, \mathrm{dpa} \, 1 \atop R \subset \widehat{Q}; l(R)=l(Q)}{ | \langle f, \phi_Q^2 \rangle |^2 } \right )^{1/2}  \left (  \sum_{R \, \mathrm{dpa} \, 1} \sum_{Q \, \mathrm{spa} \, 2 \atop Q \subset \widehat{R}; l(Q)=l(R) } {\mu(R)} \right )^{1/2}\\
&  \lesssim  \left (  \sum_{Q \, \mathrm{spa} \, 2 } { | \langle f, \phi_Q^2 \rangle |^2 } \right )^{1/2}  \left (  \sum_{R \, \mathrm{dpa} \, 1 } {\mu(R) } \right )^{1/2} \lesssim \mu(Q_0).
\end{align*}

\subsubsection*{\textbf{Estimate of $\langle f, U_4 g \rangle$}}This term is pretty easy to handle. Indeed, applying the coefficient estimate \eqref{eqn:3}, we have
\begin{align*}
| \langle f , U_4 g \rangle | & \lesssim {\sum_{Q  \, \mathrm{dpa} \, 2 ; R  \, \mathrm{dpa} \, 1 \atop l(R) = l(Q)}{ \alpha_{Q,R} \, \mu(Q)^{\frac{1}{2}} \, \mu(R)^{\frac{1}{2}}}}\\
& \lesssim \sum_{Q\, \mathrm{dpa} \, 2} \left ( \sum_{R  \, \mathrm{dpa} \, 1 ; l(R) = l(Q) \atop \rho(Q,R) < l(Q)}{ \!\!\!\!\!\!\!\mu(Q)^{\frac{1}{2}} \, \mu(R)^{\frac{1}{2}}} + \mu(Q) \!\!\!\!\! \sum_{R  \, \mathrm{dpa} \, 1 ; l(R) = l(Q) \atop \rho(Q,R) \geq l(Q)}{\!\!\!\!\!\!\!\mu(R) \left ( \frac{l(R)}{\rho(Q,R)}\right ) ^{\alpha} \frac{1}{\mu(Q,R)} }  \right )\\
& \lesssim \sum_{Q\, \mathrm{dpa} \, 2} {\mu(Q)} \lesssim \mu(Q_0),\\
\end{align*}
where we have used Lemma \ref{Calcul1} and once again the fact that any fixed cube $Q$ has a uniformly bounded number of neighbors.

\subsection{Analysis of $V$}As in the previous subsection, we write
\[ \langle f,Vg \rangle = \sum_{j \geq 0}{\langle D_j^2 f, T E_j^1 g \rangle} = \langle f,V_1 g \rangle + \langle f,V_2 g \rangle + \langle f,V_3 g \rangle + \langle f,V_4 g \rangle, \]
with
\begin{align*}
\langle f,V_1 g \rangle & =  {\sum_{Q  \, \mathrm{spa} \, 2 ; R  \, \mathrm{pa} \, 1 \atop l(R) = l(Q) }{  \langle f, \phi_Q^2 \rangle   \langle b^2 \phi_Q^2 , T(b^1 1_R)\rangle \frac{[g]_R}{[b^1]_R} }},\\
\medskip
\langle f,V_2 g \rangle & =   {\sum_{Q  \, \mathrm{spa} \, 2}{\sum_{l(P_i^1) \geq l(Q)}{  \langle f, \phi_Q^2 \rangle \langle b^2 \phi_Q^2 , T(b^1_{P^1_i})\rangle [g]_{P^1_i}}}},\\
\medskip
\langle f,V_3 g \rangle & =   {\sum_{Q  \, \mathrm{dpa} \, 2 ; R  \, \mathrm{pa} \, 1 \atop l(R) = l(Q)}{\langle \xi^2_Q f, T( b^1 1_R)\rangle \frac{[g]_R}{[b^1]_R} }},\\
\medskip
\langle f,V_4 g \rangle & =   {\sum_{Q  \, \mathrm{dpa} \, 2 }{\sum_{l(P^1_i) \geq l(Q)}{\langle \xi^2_Q f, T(b^1_{P^1_i})\rangle [g]_{P^1_i}}}}.\\\end{align*}
The term $V_1$ will be the most difficult term, it is where a parapoduct appears, which will require the use of hypothesis \eqref{WBP1} to be controlled. 

\subsubsection*{\textbf{Estimate of $\langle f,V_4 g \rangle$}}We split the sum into two parts, as before, depending on the distance between $P_i^1$ and $Q$. When $\rho(P_i^1,Q) \geq l(P_i^1)$, we have, by the coefficient estimate \eqref{eqn:6},
\begin{align*}
{\sum_{Q  \, \mathrm{dpa} \, 2}\!\!\!{\sum_{l(P^1_i) \geq l(Q) \atop \rho(P_i^1,Q) \geq l(P_i^1)}{\!\!\!\!\!\!\!\!\!  | \langle \xi^2_Q f, T(b^1_{P^1_i})\rangle [g]_{P^1_i}| }}} \, \, \,   & \lesssim  \, \sum_{P^1_i} \!\! \sum_{Q  \, \mathrm{dpa} \, 2 ; l(Q) \leq l(P^1_i) \atop  \rho(Q,P_i^1) \geq l(P_i^1)}{\!\!\!\!\!\!\mu(P^1_i)^{\frac{1}{2}} \mu(Q)^{\frac{1}{2}} \alpha_{Q, P^1_i} }\\
& \lesssim \,   \sum_{P^1_i}  {\mu(P^1_i)} \!\! \!\!\!\sum_{Q  \, \mathrm{dpa} \, 2 ; l(Q) \leq l(P^1_i) \atop  \rho(Q,P_i^1) \geq l(P_i^1)}{\!\!\!\!\!\!\!\!\!\!\! \mu(Q) \left ( \frac{l(Q)}{\rho(Q,R)} \right )^{\alpha} \frac{1}{\mu(Q,R)}  } \\
& \lesssim \sum_{P^1_i}{\mu(P^1_i)}  \lesssim \mu(Q_0),\\
\end{align*}
where we have once again used Lemma \ref{Calcul1} to obtain the third inequality, and the $1-$packing property of the $P^1_i$ to get the last one. It then remains to estimate the sum
\[\langle f, V_{4,1}g \rangle =  \sum_{P^1_i}  \sum_{Q  \, \mathrm{dpa} \, 2 ; l(Q) \leq l(P^1_i) \atop  \rho(Q,P_i^1) < l(P_i^1)}{\!\!\! \langle \xi^2_Q f, T(b^1_{P^1_i})\rangle [g]_{P^1_i}  } = \sum_{P^1_i} \sum_{P \subset \widehat{P^1_i} \atop l(P) = l(P^1_i)} \sum_{Q  \, \mathrm{dpa} \, 2 \atop Q \subset P}{ \langle \xi^2_Q f, T(b^1_{P^1_i})\rangle [g]_{P^1_i} },\]
which we will do below, when we take care of the term $V_{2,1}$. 

\subsubsection*{\textbf{Estimate of $\langle f, V_3 g \rangle$}}This term is easy to handle. Indeed, applying the coefficient estimate \eqref{eqn:4}, and the fact that the mean of $b^1$ is bounded below uniformly on the pa $1$ cubes $R$, we have
\begin{align*}
|\langle f,V_3 g \rangle | & \lesssim  \sum_{Q   \, \mathrm{dpa} \, 2 } \sum_{R  \, \mathrm{pa} \, 1 \atop l(R) = l(Q) }{|  \langle \xi^2_Q f, T( b^1 1_R)\rangle | \frac{|[g]_R|}{|[b^1]_R|}   }\\
& \lesssim  \sum_{Q   \, \mathrm{dpa} \, 2 } \sum_{R  \, \mathrm{pa} \, 1 \atop l(R) = l(Q) }{\alpha_{Q,R} \, \mu(Q)^{\frac{1}{2}} \, \mu(R)^{\frac{1}{2}}}\\
& \lesssim  \sum_{Q   \, \mathrm{dpa} \, 2 }{ \mu(Q)} \!\!\!\! \sum_{R  \, \mathrm{pa} \, 1 ; l(R) = l(Q) \atop \rho(R,Q) \geq l(R) }{\!\!\!\!\!\!\!\mu(R) \left ( \frac{l(R)}{\rho(Q,R)} \right )^{\alpha} \frac{1}{\mu(Q,R)} } + \!\! \sum_{Q   \, \mathrm{dpa} \, 2 }{ \mu(Q)^{\frac{1}{2}}} \!\!\!\!\!\! \sum_{R  \, \mathrm{pa} \, 1 ; l(R) = l(Q) \atop \rho(R,Q) < l(R) }{\!\!\!\!\!\!\mu(R)^{\frac{1}{2}}}\\
&\lesssim   \sum_{Q   \, \mathrm{dpa} \, 2 }{ \mu(Q)} +  \sum_{Q   \, \mathrm{dpa} \, 2 }{ \mu(Q)} \lesssim \mu(Q_0),\\
\end{align*}
where once more the last line is a consequence of Lemma \ref{Calcul1}, the fact that neighbor cubes have comparable measure, that any dyadic cube $Q$ has a uniformly bounded number of such neighbors, and the $C_X-$packing property of the dpa $2$ cubes \eqref{packing2}.

\subsubsection*{\textbf{Estimate of $\langle f, V_2 g \rangle$}}First, let us examine the part of the sum when the cubes $P_i^1$ and $Q$ are close, that is
\begin{align*} 
\langle f, V_{2,1}g \rangle  & =  \sum_{P^1_i} \sum_{Q  \, \mathrm{spa} \, 2 ; l(Q) \leq l(P^1_i) \atop \rho(Q,P^1_i) < l(P^1_i)}{\langle f, \phi_Q^2 \rangle  \langle b^2 \phi_Q^2 , T(b^1_{P^1_i})\rangle [g]_{P^1_i}} \\
& = \sum_{P^1_i} \!\!\! \sum_{P \subset \widehat{P^1_i} \atop l(P) = l(P^1_i)}\!\!\! \sum_{Q  \, \mathrm{spa} \, 2 \atop Q \subset P}{   \langle f, \phi_Q^2 \rangle \langle b^2 \phi_Q^2 , T(b^1_{P^1_i})\rangle [g]_{P^1_i}}. \\
\end{align*}
We put this sum together with the term left to estimate $\langle f, V_{4,1}g \rangle$ to get 
\[ \Sigma =  \sum_{P^1_i} {[g]_{P^1_i} }\!\!\!  \sum_{P \subset \widehat{P^1_i} \atop l(P) = l(P^1_i)}\!\!\!  \left ( \sum_{Q  \, \mathrm{spa} \, 2 \atop Q \subset P}{ \langle f, \phi_Q^2 \rangle \langle b^2 \phi_Q^2 , T(b^1_{P^1_i})\rangle } +   \sum_{Q  \, \mathrm{dpa} \, 2 \atop Q \subset P}{ \langle \xi^2_Q f, T(b^1_{P^1_i})\rangle  }   \right ) .\]
By definition, we have
\[ \sum_{Q  \, \mathrm{spa} \, 2 \atop Q \subset P}{ \langle f, \phi_Q^2 \rangle \langle b^2 \phi_Q^2 , T(b^1_{P^1_i})\rangle }  + \sum_{Q  \, \mathrm{dpa} \, 2 \atop Q \subset P}{\langle \xi^2_Q f, T(b^1_{P^1_i})\rangle  } = \langle \Pi^{2}(f1_P) , T(b^1_{P^1_i}) \rangle  -  [f]_P \langle b^2 1_P   ,  T(b^1_{P^1_i})  \rangle. \]
We can assume that $P$ is pa $2$ (else the above sum reduces to $0$), so that, using \eqref{accretive3} for $T(b^1_{P^1_i})$ on $P \subset \widehat{P^1_i}$, we get
\[ | \langle b^2 1_P   ,  T(b^1_{P^1_i})  \rangle| \lesssim \|b^2\|_{L^q(P)} \|  T(b^1_{P^1_i}) \|_{L^{q'}(P)} \lesssim \mu(P).   \]
Observe that since the $P^2_j$ contained in $P$ are non overlapping dyadic subcubes of $P$, 
\[ \| \Pi^{2} (f 1_P) \|_{L^q(P)}^q \lesssim \| f \|_{L^q(P)}^q + \sum_{P^2_j \subset P}{\|b^2_{P^2_j}\|_{L^q(P)}^q} \lesssim \mu(P) + \sum_{P^2_j \subset P}{\mu(P^2_j)} \lesssim \mu(P), \]
and thus, applying again \eqref{accretive3} for $T(b^1_{P^1_i})$ on $P$, we have $| \langle \Pi^{2} (f1_P) , T(b^1_{P^1_i}) \rangle |  \lesssim \mu(P)$, giving the expected bound on $\langle f, (V_{2,1} + V_{4,1})g  \rangle$.

\noindent We now estimate the sum in $\langle f,V_2 g \rangle$ running over the pairs of cubes $Q$ and $P_i^1$ which are far away from one another. Applying the coefficient estimate \eqref{eqn:7} and then the Cauchy-Schwarz inequality, one gets
\[ \sum_{P^1_i} \sum_{Q  \, \mathrm{spa} \, 2 ; l(Q) \leq l(P^1_i) \atop \rho(Q,P^1_i) \geq l(P^1_i)}{ | \langle f, \phi_Q^2 \rangle | \, | \langle b^2 \phi_Q^2 , T(b^1_{P^1_i})\rangle [g]_{P^1_i}|}  \lesssim \sum_{P^1_i} \sum_{Q  \, \mathrm{spa} \, 2 ; l(Q) \leq l(P^1_i) \atop \rho(Q,P^1_i) \geq l(P^1_i)}{ | \langle f, \phi_Q^2 \rangle | \, \mu(P^1_i)^{\frac{1}{2}} \, \alpha_{Q,P^1_i}} \]
\begin{align*}
&  \lesssim  \sum_{P^1_i} \!\!\! \sum_{Q  \, \mathrm{spa} \, 2 ; l(Q) \leq l(P^1_i) \atop \rho(Q,P^1_i) \geq l(P^1_i)}{\!\!\!\left \{   | \langle f, \phi_Q^2 \rangle | \frac{\mu(P^1_i)^{1/2}}{\mu(Q,P^1_i)^{1/2}}  \frac{l(Q)^{\alpha /2}}{\rho(Q,P^1_i)^{\alpha /2}}  \right \} \!\!  \left  \{\mu(P^1_i)^{\frac{1}{2}} \frac{\mu(Q)^{1/2}}{\mu(Q,P^1_i)^{1/2}}  \frac{l(Q)^{\alpha /2}}{\rho(Q,P^1_i)^{\alpha /2}}   \right \}}\\
& \lesssim S_1^{1/2} \, . \, S_2^{1/2},
\end{align*}
with
\begin{align*}
S_1 & =   \sum_{Q  \, \mathrm{spa} \, 2}{ | \langle f, \phi_Q^2 \rangle |^2} \!\!\!\!\! \sum_{P^1_i ; l(P^1_i)  \geq l(Q) \atop \rho(P^1_i,Q) \geq l(P^1_i)}{\!\!\frac{\mu(P^1_i)}{\mu(Q,P^1_i)}\left (\frac{l(Q)}{\rho(Q,P^1_i)}\right )^{\alpha}} ,\\
S_2 & =  \sum_{P^1_i}{\mu(P^1_i)} \!\!\!\!\! \sum_{Q  \, \mathrm{spa} \, 2 ; l(Q) \leq l(P^1_i) \atop  \rho(Q,P^1_i) \geq l(P^1_i)}{\!\frac{\mu(Q)}{\mu(Q,P^1_i)}\left (\frac{l(Q)}{\rho(Q,P^1_i)}\right )^{\alpha}} .   \\
\end{align*}
By Lemma \ref{Calcul1} and the $1-$packing property of the $P^1_i$, we have already $S_2 \lesssim \sum_i{\mu(P^1_i)} \lesssim \mu(Q_0).$ For the first sum, we have to work a bit more. The result is obtained applying another summing lemma: 
\begin{lem}\label{Calcul2}
Let $Q$ be a fixed dyadic cube of the space of homogeneous type $X$. Let $p\in \N$. We have the following summing property
\[ \sum_ {R ; l(R)  = \delta^{-p} l(Q) \atop \rho(R,Q) \geq l(R)}{\!\!\mu(R)\left (\frac{l(Q)}{\rho(Q,R)}\right )^{\alpha}\frac{1}{\mu(Q,R)}} \lesssim \delta^{p\alpha}, \]
which, by summing over $p \in \N$, immediately yields
\[  \sum_ {R ; l(R)  \geq l(Q) \atop \rho(R,Q) \geq l(R)}{\!\!\mu(R)\left (\frac{l(Q)}{\rho(Q,R)}\right )^{\alpha}\frac{1}{\mu(Q,R)}} \lesssim 1 .\]
\end{lem}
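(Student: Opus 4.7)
The plan is to mimic the proof of Lemma \ref{Calcul1}, splitting the sum into annular regions according to the size of $\rho(Q,R)$. For $R$ appearing in the sum, the condition $\rho(R,Q) \geq l(R) = \delta^{-p}l(Q)$ guarantees $\rho(Q,R) \geq \delta^{-p}l(Q)$, so I would decompose the sum according to the value of $m \geq 0$ for which $\delta^{-m-p}l(Q) \leq \rho(Q,R) < \delta^{-m-p-1}l(Q)$, which I will denote $\rho(Q,R) \sim \delta^{-m-p}l(Q)$ as in Lemma \ref{Calcul1}.

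For a fixed pair $(p,m)$, all the cubes $R$ contributing to the sum belong to the same generation (since $l(R) = \delta^{-p}l(Q)$), so they are pairwise disjoint. Moreover, they are all contained in the ball $B(z_Q, C\delta^{-m-p}l(Q))$ for some absolute constant $C$ depending only on $C_1$ from Lemma \ref{cubes}. Hence
\[ \sum_{R ; l(R) = \delta^{-p}l(Q) \atop \rho(Q,R) \sim \delta^{-m-p}l(Q)} \mu(R) \lesssim \mu\bigl(B(z_Q, \delta^{-m-p}l(Q))\bigr), \]
by the doubling property.

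Next, for any such $R$, the definition of $\mu(Q,R)$ together with the doubling property yields $\mu(Q,R) \gtrsim \mu\bigl(B(z_Q, \delta^{-m-p}l(Q))\bigr)$, while $l(Q)/\rho(Q,R) \sim \delta^{m+p}$. Plugging this in, the contribution of the annulus indexed by $m$ is bounded by
\[ \delta^{(m+p)\alpha}\; \frac{1}{\mu\bigl(B(z_Q, \delta^{-m-p}l(Q))\bigr)} \sum_{R} \mu(R) \;\lesssim\; \delta^{(m+p)\alpha}. \]
Summing the geometric series over $m \geq 0$ produces the claimed bound $\delta^{p\alpha}$, and then summing again over $p \in \mathbb{N}$ yields the second inequality of the lemma. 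No step should present a real obstacle: this is a direct analogue of Lemma \ref{Calcul1}, the only difference being that here $R$ is larger than $Q$ rather than smaller, so we measure distances in units of $l(R)$ instead of $l(Q)$, but the gain of $\delta^{p\alpha}$ still comes from the ratio $(l(Q)/\rho(Q,R))^\alpha \leq \delta^{(m+p)\alpha}$.
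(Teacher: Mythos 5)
Your argument is correct, and it follows the same essential scheme as the paper's: decompose the sum into coronae $\rho(Q,R)\sim\delta^{-m-p}l(Q)$, use disjointness of cubes of equal generation to control the total measure in each corona, observe that $\mu(Q,R)\gtrsim\mu(B(z_Q,\delta^{-m-p}l(Q)))$ there, and sum the resulting geometric series. The only cosmetic difference is that the paper first rewrites each term as an integral over $R$ and then splits the single integral into annuli, whereas you split the discrete sum directly; both rest on exactly the same doubling and disjointness estimates.
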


\begin{proof}
Observe that since $\rho(Q,R) \geq l(R) \geq l(Q)$, we have $\mu(Q,R) \approx \mu(B(z_Q,\rho(z_Q,y)))$ for all $y \in R$, with $z_Q$ denoting the center of the cube $Q$. In the same way, $\rho(Q,R) \approx \rho(z_Q,y)$ for all $y\in R$. Therefore, we can write
\begin{align*}
 \sum_ {R ; l(R)  = \delta^{-p} l(Q) \atop \rho(R,Q) \geq l(R)}{\!\!\!\left (\frac{l(Q)}{\rho(Q,R)}\right )^{\alpha}\!\!\!\frac{\mu(R)}{\mu(Q,R)}}  & \lesssim \sum_{R ; l(R)  = \delta^{-p} l(Q) \atop \rho(R,Q) \geq l(R)}{\!\! \int_R{\left (\frac{l(Q)}{\rho(z_Q,y)}\right )^{\alpha} \frac{d\mu(y)}{\mu(B(z_Q,\rho(z_Q,y)))} }}\\
 & \lesssim \int_{\rho(z_Q,y) > l(R)}{\left (\frac{l(Q)}{\rho(z_Q,y)}\right )^{\alpha} \frac{d\mu(y)}{\mu(B(z_Q,\rho(z_Q,y)))}}\\
 & \lesssim  \sum_{m > 0} \int_{\rho(z_Q,y) \sim \delta^{-m}l(R) }{\left (\frac{l(Q)}{\delta^{-m}l(R)}\right )^{\alpha} \frac{d\mu(y)}{\mu(B(z_Q,\delta^{-m}l(R)))}}\\
 & \lesssim \left ( \frac{l(Q)}{l(R)} \right )^{\alpha} \sum_{m  > 0}{\delta^{m\alpha} \frac{\mu(B(z_Q,\delta^{-m-1}l(R)))}{\mu(B(z_Q,\delta^{-m}l(R)))}} \lesssim \delta^{p\alpha}.\\
\end{align*}
The second inequality comes from the fact that the cubes $R$ are non overlapping as they are all of the same generation. Then we obtained the third inequality by splitting the integral over the coronae $\rho(z_Q,y) \sim \delta^{-m}l(R)$, and the last line follows easily.
\end{proof}
\noindent Going back to our argument, Lemma \ref{Calcul2} ensures that $S_1 \lesssim \sum_{Q  \, \mathrm{spa} \, 2}{ | \langle f, \phi_Q^2 \rangle |^2} \lesssim \mu(Q_0),$ and it thus concludes the estimation of the term $\langle f, V_2 g \rangle$.
\subsubsection*{\textbf{Estimate of $ \langle f, V_1 g \rangle$}}This is the most difficult term. Recall that
\[ \langle f, V_1 g \rangle =   {\sum_{Q  \, \mathrm{spa} \, 2 ; R  \, \mathrm{pa} \, 1 \atop l(R) = l(Q) }{  \langle f, \phi_Q^2 \rangle   \langle b^2 \phi_Q^2 , T(b^1 1_R)\rangle \frac{[g]_R}{[b^1]_R} }} .\]
We split it into three parts: write
\[ \langle f, V_1 g \rangle = \langle f, V_{1,1} g \rangle + \langle f, V_{1,2} g \rangle + \langle f, V_{1,3} g \rangle, \]
with
\begin{align*}
\langle f, V_{1,1} g \rangle & = \sum_{Q  \, \mathrm{spa} \, 2 \, \mathrm{and} \, \mathrm{non} \, \mathrm{pa} \, 1 \atop{ R  \, \mathrm{pa} \, 1 \atop l(R) = l(Q) }}{  \langle f, \phi_Q^2 \rangle \langle b^2 \phi_Q^2 , T(b^1 1_R)\rangle \frac{[g]_R}{[b^1]_R} },\\
\langle f, V_{1,2} g \rangle & = \sum_{Q  \, \mathrm{spa} \, 2 \, \mathrm{and} \, \mathrm{pa} \, 1  }{  \langle f, \phi_Q^2 \rangle \langle b^2 \phi_Q^2 , T(b^1 1_Q)\rangle \frac{[g]_Q}{[b^1]_Q} },\\
\langle f, V_{1,3} g \rangle & =  \!\!\!\! \sum_{Q  \, \mathrm{spa} \, 2 \atop \mathrm{and} \, \mathrm{pa} \, 1 } \!\! \sum_{ R  \, \mathrm{pa} \, 1 ; R \neq Q \atop l(R) = l(Q) }{\!\left ( \!\! \langle f, \phi_Q^2 \rangle  \langle b^2 \phi_Q^2 , T(b^1 1_R)\rangle \frac{[g]_R}{[b^1]_R} + \!  \langle f, \phi_Q^2 \rangle \langle b^2 \phi_Q^2 , T  (b^1 (1_Q \! - \!\!\!\!\!\!\!\! \sum_{R'  \, \mathrm{pa} \, 1 \atop l(R') = l(Q) }{\!\!\!\!\!1_{R'}} )  ) \rangle \frac{[g]_Q}{[b^1]_Q} \!\! \right ) }.
\end{align*}
\paragraph{\textbf{Estimate of $\langle f, V_{1,1} g \rangle$}}Remark first that $Q$ non pa $1$ means $Q \subset P^1_i$ for some $i$. Hence,
\[ \langle f, V_{1,1} g \rangle = \sum_{P^1_i} \sum_{Q   \, \mathrm{spa} \, 2 \atop Q \subset P^1_i} \sum_{R \, \mathrm{pa} \, 1 \atop l(R) = l(Q)} {  \langle f, \phi_Q^2 \rangle \langle b^2 \phi_Q^2 , T(b^1 1_R)\rangle \frac{[g]_R}{[b^1]_R}}. \]
Now, applying the coefficient estimate \eqref{eqn:5} and the fact that the mean of $b^1$ is uniformly bounded below on the pa $1$ cubes, we obtain
\[ | \langle f, V_{1,1} g \rangle | \lesssim   \sum_{P^1_i} \sum_{Q   \, \mathrm{spa} \, 2 \atop Q \subset P^1_i} \sum_{R \, \mathrm{pa} \, 1 \atop l(R) = l(Q)}{\alpha_{Q,R} \,  | \langle f, \phi_Q^2 \rangle | \, \mu(R)^{\frac{1}{2}} } . \]
Fix $P^1_i$, and observe that the cubes $R$ involved in this sum are necessarily outside of $P^1_i$. Split the sum for the cubes $R$ that are contained in $\widehat{P^1_i}$, and those that are outside of $\widehat{P^1_i}$. We first show that
\[  \mathrm{I}  =   \sum_{Q   \, \mathrm{spa} \, 2 \atop Q \subset P^1_i} \sum_{R \, \mathrm{pa} \, 1 ; l(R) = l(Q) \atop R \cap \widehat{P^1_i} = \varnothing }{\alpha_{Q,R} \,  | \langle f, \phi_Q^2 \rangle | \, \mu(R)^{\frac{1}{2}} }  \lesssim \mu(P^1_i). \]
For such pairs of cubes $(Q,R)$, as $Q \subset P^1_i$ we necessarily have $\rho(Q,R) \geq \rho(P^1_i,R) \geq l(P^1_i)$, and $\mu(Q,R) \gtrsim \mu(P^1_i,R),$ so that, applying the Cauchy-Schwarz inequality, one gets
\begin{align*}
\mathrm{I} & \lesssim  \sum_{Q   \, \mathrm{spa} \, 2 \atop Q \subset P^1_i} \sum_{R \, \mathrm{pa} \, 1 ; l(R) = l(Q) \atop \rho(R, P^1_i) \geq l(P^1_i)} { | \langle f, \phi_Q^2 \rangle | \, \mu(R)^{\frac{1}{2}}    \frac{l(R)^{\alpha}}{\rho(P^1_i,R)^{\alpha}}\frac{ \mu(Q)^{\frac{1}{2}} \mu(R)^{\frac{1}{2}}}{\mu(P^1_i,R)}  }  \\
& \lesssim \!\!  \left (\!\! \sum_{Q   \, \mathrm{spa} \, 2 \atop Q \subset P^1_i}\!\!\!\!\! \sum_{R \, \mathrm{pa} \, 1 \atop{ l(R) = l(Q) \atop \rho(R, P^1_i) \geq l(P^1_i)}}{\!\!\!\!\!\!\!\!\!\!\!\! | \langle f, \phi_Q^2 \rangle |^2   \frac{l(R)^{\alpha}}{\rho(P^1_i,R)^{\alpha}} \frac{\mu(R)}{\mu(P^1_i,R)} }\!\! \right )^{\!\!\!1/2} \!\!\!  \left ( \!\!  \sum_{Q   \, \mathrm{spa} \, 2 \atop Q \subset P^1_i} \!\!\!\!\!\sum_{R \, \mathrm{pa} \, 1 \atop{ l(R) = l(Q) \atop \rho(R, P^1_i) \geq l(P^1_i)}}{\!\!\!\!\!\!\!\!\!\!\!\!\mu(Q)   \frac{l(R)^{\alpha}}{\rho(P^1_i,R)^{\alpha}} \frac{\mu(R)}{\mu(P^1_i,R)} }\!\!  \right ) ^{\!\!\!1/2}\\
\end{align*}
By Lemma \ref{Calcul1}, we get
\[  \sum_{R \, \mathrm{pa} \, 1 ; l(R) = l(Q) \atop \rho(R, P^1_i) \geq l(P^1_i)}{  \left ( {\frac{l(R)}{\rho(P^1_i,R)}} \right )^{\alpha} \frac{\mu(R)}{\mu(P^1_i,R)} } \lesssim \left ( \frac{l(Q)}{l(P^1_i)} \right )^{\alpha} \lesssim 1  .\]
Therefore,
\begin{align*}
\mathrm{I} & \lesssim \left (\sum_{Q   \, \mathrm{spa} \, 2 \atop Q \subset P^1_i}{ | \langle f, \phi_Q^2 \rangle |^2 } \right )^{1/2}   \left ( \sum_{Q   \, \mathrm{spa} \, 2 \atop Q \subset P^1_i}{\mu(Q) \left ( \frac{l(Q)}{l(P^1_i)} \right )^{\alpha}  }   \right )^{1/2}\\
& \lesssim \|f\|_{L^2(P^1_i)} \left ( \sum_{m \geq 0} \delta^{m\alpha} \sum_{Q \subset P^1_i \atop l(Q)=\delta^m l(P^1_i)}{\mu(Q)} \right )^{1/2} \lesssim \mu(P^1_i).
\end{align*}
We are left to show that
\[ \mathrm{II} =  \sum_{Q   \, \mathrm{spa} \, 2 \atop Q \subset P^1_i} \sum_{R \, \mathrm{pa} \, 1 ; l(R) = l(Q) \atop R \subset \widehat{P^1_i}  }{\alpha_{Q,R} \,  | \langle f, \phi_Q^2 \rangle | \, \mu(R)^{\frac{1}{2}} }  \lesssim \mu(P^1_i).\]
We again split this sum for the cubes $Q$ that are at a distance less than $l(Q)$ to the complement of $P^1_i$, and for those that are not. In the first case, for a fixed such cube $Q$, there are the cubes $R$ which are neighbors of $Q$ - they come in a uniformly bounded number and they have a measure comparable to that of $Q$ - and there are the cubes $R$ at a distance greater than $l(Q)$ of Q. Write
\begin{align*}
\mathrm{II}_1 & = \sum_{Q   \, \mathrm{spa} \, 2 ; Q \subset P^1_i \atop \rho(Q, (P^1_i)^c) < l(Q)} \sum_{R \, \mathrm{pa} \, 1 ; l(R) = l(Q) \atop R \subset \widehat{P^1_i}  }{\alpha_{Q,R} \,  | \langle f, \phi_Q^2 \rangle | \, \mu(R)^{\frac{1}{2}} } \\
& \lesssim \sum_{Q   \, \mathrm{spa} \, 2 ; Q \subset P^1_i \atop \rho(Q, (P^1_i)^c) < l(Q)}{\!\!\!\!\!\!\!\!\! | \langle f, \phi_Q^2 \rangle |} \left \{ \mu(Q)^{\frac{1}{2}} +\!\!\!\!\!\!\! \sum_{R \, \mathrm{pa} \, 1 ; l(R) = l(Q) \atop R \subset \widehat{P^1_i} ; \rho(R,Q) \geq l(Q)  }{ \!\!\!\!\!\!\!\mu(Q)^{\frac{1}{2}} \frac{\mu(R)}{\mu(Q,R)}\left ( \frac{l(R)}{\rho(Q,R)}\right )^{\alpha} } \right \} \\
& \lesssim  \sum_{Q   \, \mathrm{spa} \, 2  ; Q \subset P^1_i \atop \rho(Q, (P^1_i)^c) < l(Q)}{\!\!\!\!\!\!\!\!\!\!\! | \langle f, \phi_Q^2 \rangle | \mu(Q)^{\frac{1}{2}}} \lesssim  \left \{ \sum_{Q\, \mathrm{spa} \, 2 ; Q \subset P^1_i  \atop \rho(Q,(P^1_i)^c) < l(Q)}{\!\!\!\!\!\!\!\!\!\!\!\mu(Q)}  \right \}^{1/2}  \left \{ \sum_{Q\, \mathrm{spa} \, 2 ; Q \subset P^1_i  \atop \rho(Q,(P^1_i)^c) < l(Q)}{\!\!\!\!\!\!\!\!\!\!{ | \langle f, \phi_Q^2 \rangle |}^2}  \right \} ^{1/2},\\
\end{align*}
applying Lemma \ref{Calcul1} to get the second inequality and then the Cauchy-Schwarz inequality to get the last one. By Lemma \ref{Mart}, $\sum_{Q \subset P^1_i}{{ | \langle f, \phi_Q^2 \rangle |}^2} \lesssim \| f \|^2_{L^2(P^1_i)} \lesssim \mu(P^1_i). $ Furthermore, 
\begin{align*}
\sum_{Q \subset P^1_i  \atop \rho(Q,(P^1_i)^c) < l(Q)}{\!\!\!\!\!\!\!\mu(Q)} & = \sum_{m\geq0} \sum_{Q \subset P^1_i ; l(Q)=\delta^m l(P^1_i) \atop \rho(Q,(P^1_i)^c) < \delta^m l(P^1_i)}{\!\!\!\!\!\!\!\mu(Q)}\\
& \lesssim \sum_{m \geq 0}{\mu \left ( \{ x \in P^1_i: \rho(x,(P^1_i)^c) \lesssim \delta^m l(P^1_i) \} \right ) }\\
& \lesssim \sum_{m \geq 0}{\delta^{m \eta} \mu(P^1_i)} \lesssim \mu(P^1_i),
\end{align*}
by the small boundary condition $(6)$ of Lemma \ref{cubes}, so that, as desired, we have $\mathrm{II}_1 \lesssim \mu(P^1_i).$ The sum left to estimate is
\[ \mathrm{II}_2 =  \sum_{Q   \, \mathrm{spa} \, 2  ; Q \subset P^1_i \atop \rho(Q, (P^1_i)^c) \geq l(Q)}  \sum_{R \, \mathrm{pa} \, 1 ; l(R) = l(Q) \atop R \subset \widehat{P^1_i} } { | \langle f, \phi_Q^2 \rangle | \mu(Q)^{\frac{1}{2}} \frac{\mu(R)}{\mu(Q,R)} \left ( \frac{l(Q)}{\rho(Q,R)} \right )^{\alpha}}.  \]
For such couples of cubes $(Q,R)$, we have necessarily $\rho(Q,R) \geq \rho(Q,(P^1_i)^c) \geq l(Q)$ as $R$ and $P^1_i$ are disjoint. For a fixed cube $Q$, we thus see that
\begin{align*}  
\sum_{R \subset \widehat{P^1_i} ; l(R) = l(Q) \atop \rho(R,Q) \geq   l(Q)} { \frac{\mu(R)}{\mu(Q,R)} \left ( \frac{l(Q)}{\rho(Q,R)} \right )^{\alpha}} & \lesssim \left ( \frac{l(Q)}{\rho(Q,(P^1_i)^c)} \right )^{\alpha/2} \!\!\!\!\!\!\!\!\!\!\!\sum_{R \subset \widehat{P^1_i} ; l(R) = l(Q) \atop \rho(R,Q) \geq   l(Q)} { \frac{\mu(R)}{\mu(Q,R)} \left ( \frac{l(R)}{\rho(Q,R)} \right )^{\alpha/2}}\\
& \lesssim  \left ( \frac{l(Q)}{\rho(Q,(P^1_i)^c)} \right )^{\alpha/2},
\end{align*}
by Lemma \ref{Calcul1}. Applying the Cauchy-Schwarz inequality, we thus have
\begin{align*}
\mathrm{II}_2 & \lesssim  \sum_{Q   \, \mathrm{spa} \, 2  ; Q \subset P^1_i \atop \rho(Q, (P^1_i)^c) \geq l(Q)}{ | \langle f, \phi_Q^2 \rangle | \mu(Q)^{\frac{1}{2}}  \left ( \frac{l(Q)}{\rho(Q,(P^1_i)^c)} \right )^{\frac{\alpha}{2}} }\\
& \lesssim  \left \{ \sum_{Q   \, \mathrm{spa} \, 2  ; Q \subset P^1_i \atop \rho(Q, (P^1_i)^c) \geq l(Q)}{{ | \langle f, \phi_Q^2 \rangle |}^2} \right \}^{1/2} \left \{ \sum_{Q   \, \mathrm{spa} \, 2  ; Q \subset P^1_i \atop \rho(Q, (P^1_i)^c) \geq l(Q)}{ \left ( \frac{l(Q)}{\rho(Q,(P^1_i)^c)} \right )^{ \alpha} \mu(Q) } \right \}^{1/2}\\
\end{align*}
Lemma \ref{Mart} ensures that the first term is bounded by $\mu(P^1_i)^{1/2}$. For the second term, observe that
\[   \sum_{Q \subset P^1_i \atop \rho(Q, (P^1_i)^c) \geq l(Q)}{ \left ( \frac{l(Q)}{\rho(Q,(P^1_i)^c)} \right )^{ \alpha} \mu(Q) } = \sum_{m \in \N ; j \in \Z \atop m\geq j > j_0} {\delta^{\alpha(m-j)}} \sum_{Q \in E_{m,j} }{\mu(Q)}, \]
where, for $m \in \N, j \in \Z$, with $m \geq j  > j_0$,
\[ E_{m,j} = \{ Q \subset P^1_i: \frac{l(Q)}{l(P^1_i)} = \delta^m \, \mathrm{and} \,  \delta^j l(P^1_i)  \leq \rho(Q,(P^1_i)^c) < \delta^{j-1}l(P^1_i) \}, \]
and $j_0$ is such that $\delta^{j_0} \geq C_1 > \delta^{j_0 +1}$, with $C_1$ the constant intervening in Lemma \ref{cubes}. Indeed, for the cubes $Q$ intervening in the above sum, we always have $\rho(Q,(P^1_i)^c) < \diam(P^1_i) \leq C_1 l(P^1_i), $ which forces $j > j_0$. Observe that if $x \in Q \subset E_{m,j}$, then $\rho(x,(P^1_i)^c) \leq \rho(Q,(P^1_i)^c) + l(Q) \leq \delta^{j-1}l(P^1_i) + \delta^m l(P^1_i) \lesssim \delta^j l(P^1_i)$ since $m\geq j$. Thus the cubes of $E_{m,j}$ pave a corona $\{ x \in P^1_i: \rho(x,(P^1_i)^c) \lesssim \delta^j l(P^1_i) \}$. But by the small boundary condition $(6)$ of Lemma \ref{cubes}, this corona has its measure bounded by $\delta^{j \eta} \mu(P^1_i)$. Therefore,
\[   \sum_{Q \subset P^1_i \atop \rho(Q, (P^1_i)^c) \geq l(Q)}{ \!\!\!\!\!\!\!\!\!\!\!\!\!\left ( \frac{l(Q)}{\rho(Q,(P^1_i)^c)} \right )^{ \alpha}\!\! \mu(Q) } \lesssim \sum_{m \in \N ; j \in \Z \atop m\geq j > j_0} {\delta^{\alpha m}\delta^{(\eta - \alpha)j} \mu(P^1_i)} \lesssim \sum_{j > j_0}{\delta^{(\eta - \alpha)j} \delta^{ \alpha j}\mu(P^1_i)} \lesssim \mu(P^1_i),  \]
which gives us the bound $\mathrm{II}_2 \lesssim \mu(P^1_i).$ Summing then all these terms in $i$, and applying the $1-$packing property of the $P^1_i$, we obtain the expected bound $|\langle f, V_{1,1} g \rangle| \lesssim \mu(Q_0)$.

\medskip

\paragraph{\textbf{Estimate of $\langle f, V_{1,2} g \rangle$}}This is where the paraproduct appears, and unfortunately, a non trivial error term. Write
\begin{align*}
\langle f, V_{1,2} g \rangle & =  \!\!\!\!\!\! \sum_{Q  \, \mathrm{spa} \, 2 \, \mathrm{and} \, \mathrm{pa} \, 1}{\!\!\!\!\!\!\!   \langle f, \phi_Q^2 \rangle  \langle b^2 \phi_Q^2 , T(b^1)\rangle \frac{[g]_Q}{[b^1]_Q} } - \!\!\!\!\!\!\!  \sum_{Q  \, \mathrm{spa} \, 2 \, \mathrm{and} \, \mathrm{pa} \, 1}{\!\!\!\!\!\!\!   \langle f, \phi_Q^2 \rangle\langle b^2 \phi_Q^2 , T(b^1 1_{\bigcup_{l(P^1_i) \geq l(Q)}{P^1_i}})\rangle \frac{[g]_Q}{[b^1]_Q} } \\
& = \mathrm{I} + \mathrm{II}. \\
\end{align*}
Observe that for the pa $1$ cubes $Q$, the set of coefficients formed by $\left ( \frac{[g]_Q}{[b^1]_Q} \right )_Q$ is uniformly bounded. It is easy to see that the hypotheses of Lemma \ref{Mart2} are verified (with $b$ = $b^2$). Indeed, as a consequence of \eqref{Hofmann3}, we have $C_{\{P_j^2\}}(b^2) \lesssim 1$. Moreover, on $F = Q_0 \backslash \cup{P^2_j}$, it is clear that $b^2$ is uniformly bounded as a consequence of Lebesgue differentiation theorem. We can thus apply Lemma \ref{Mart2} to the term $\mathrm{I}$, with $\nu = q$, and it gives us the bound
\[ |\mathrm{I}| \lesssim \|f\|_{L^q(Q_0)}\|T(b^1)\|_{L^{q'}(Q_0)} \lesssim \mu(Q_0). \]
Next, we can rewrite the second term involving a sum over the cubes $P^1_i$. We split it into two parts, depending as usual on whether or not the cubes $Q$ and $P^1_i$ are at a distance less than $l(P^1_i)$. Applying the coefficient estimate \eqref{eqn:8}, one gets
\[ \left | \sum_{P^1_i} \sum_{Q  \, \mathrm{spa} \, 2 \, \mathrm{and} \, \mathrm{pa} \, 1 \atop {l(Q) \leq l(P^1_i) ; \rho(Q,P^1_i) \geq l(P^1_i)}}{\!\!\!\!\!\!\!\!\!\!\!\!\!\!\!\!\!\!  \langle f, \phi_Q^2 \rangle  \langle b^2 \phi_Q^2 , T(b^1 1_{P^1_i }) \rangle \frac{[g]_Q}{[b^1]_Q} } \right |  \lesssim  \sum_{P^1_i} \sum_{Q  \, \mathrm{spa} \, 2 \, \mathrm{and} \, \mathrm{pa} \, 1 \atop {l(Q) \leq l(P^1_i) ; \rho(Q,P^1_i) \geq l(P^1_i)}}{\!\!\!\!\!\!\!\!\!\!\!\!\!  | \langle f, \phi_Q^2 \rangle | \, \mu(P^1_i)^{\frac{1}{2}} \, \alpha_{Q,P^1_i} }.  \]
Remark that we already estimated this sum when we were taking care of the term $\langle f, V_2 g \rangle$. We have seen that it is bounded by $\mu(Q_0)$. For the remaining sum, observe that since a pa $1$ cube $Q$ is disjoint with any $P^1_i$, such a cube which is at a distance less than $l(P^1_i)$ to $P^1_i$ is necessarily contained inside one of the neighbors of $P^1_i$. We thus have left to estimate the term
\[ \sum_{P^1_i} \sum_{P \subset \widehat{P^1_i}\backslash P^1_i \atop l(P) = l(P^1_i) }\sum_{Q  \, \mathrm{spa} \, 2 \, \mathrm{and} \, \mathrm{pa} \, 1 \atop Q \subset P}{\!\!\!\!\!\langle f, \phi_Q^2 \rangle \langle b^2 \phi_Q^2 , T(b^1 1_{P^1_i }) \rangle \frac{[g]_Q}{[b^1]_Q} }. \] 
Fix $P^1_i$ and $P$. Set $F= P \backslash \cup{P^2_j}$, and decomposing 
\[ b^2 = b^2 1_F + \sum_{P^2_j  \subset P: \rho(P^2_j, P^1_i) \geq l(P^2_j)} {b^2 1_{P^2_j}} + \sum_{P^2_j \subset P: \rho(P^2_j, P^1_i) < l(P^2_j)} {b^2 1_{P^2_j}},\] 
write
\[ \sum_{Q  \, \mathrm{spa} \, 2 \, \mathrm{and} \, \mathrm{pa} \, 1 \atop Q \subset P}{\!\!\!\!\langle f, \phi_Q^2 \rangle \langle b^2 \phi_Q^2 , T(b^1 1_{P^1_i }) \rangle \frac{[g]_Q}{[b^1]_Q} }= \Sigma_1 + \Sigma_2 + \Sigma_3.\]
As in Lemma \ref{Mart2}, denote by $L$ the operator appearing in these sums. For the first term, apply the boundedness of $L$ given by Lemma \ref{Mart2} to get
\begin{align*}
| \Sigma_1 |  & = \left | \langle Lf, b^2 1_F T(b^1 1_{P^1_i }) \rangle  \right |  \lesssim \| f \|_{L^{p'}(P)} \|b^2 1_F T(b^1 1_{P^1_i })\|_{L^p(P)}\\
& \lesssim \mu(P)^{1/{p'}} \| b^1  \|_{L^p(P^1_i)} \lesssim \mu(P^1_i),
\end{align*}
where the second inequality comes from the Hardy inequality \eqref{Hardy2}, and the fact that $b^2 1_F \in L^{\infty}(P)$, and the last inequality from the fact that by \eqref{Hofmann3}, $b^1 1_{P^1_i} \in L^p(P^1_i)$, and $P,P^1_i$ have comparable measure. For the second term $\Sigma_2$, remark that for any fixed $j$, as $P^2_j$ is strictly contained in any spa $2$ cube, $Lf$ is constant on $P^2_j$. Thus we can write
\begin{align*} 
| \Sigma_2 | & = \left | \sum_{P^2_j \subset P \atop \rho(P^2_j, P^1_i) \geq l(P^2_j)}{[Lf]_{P^2_j} \langle b^2 1_{P^2_j}, T(b^1 1_{P^1_i}) \rangle} \right | \\
& \lesssim \sum_{P^2_j \subset P \atop  \rho(P^2_j, P^1_i) \geq l(P^2_j)} {\int_{P^1_i} \int_{P^2_j} {\frac{|[Lf]_{P^2_j} b^2(x) b^1(y)| }{\lambda(x,y)}d\mu(x) d\mu(y)}}.
\end{align*}
For a fixed $j$ such that $\rho(P^2_j, P^1_i) \geq l(P^2_j)$, as $l(P^1_i) \geq l(P^2_j)$, observe that for a fixed $y \in P^1_i$, $\sup_{x \in P^2_j}{\lambda(x,y)} \approx \inf_{x \in P^2_j}{\lambda(x,y)}$ uniformly in $y$ and $j$. Therefore, we can integrate in $x$ to get
\[\int_{P^2_j}{|[Lf]_{P^2_j}| |b^2(x)| d\mu(x)} \leq |[Lf]_{P^2_j}| \mu(P^2_j) \leq \int_{P^2_j}{|Lf(x)|d\mu(x)},\]
and we obtain
\begin{align*}
| \Sigma_2 | & \lesssim \sum_{P^2_j \subset P} {\int_{P^2_j} \int_{P^1_i} {\frac{|[Lf(x)| | b^1(y)| }{\lambda(x,y)}d\mu(y) d\mu(x)}} \lesssim \int_{P} \int_{P^1_i} {\frac{|[Lf(x)| | b^1(y)| }{\lambda(x,y)}d\mu(y) d\mu(x)}\\
& \lesssim \| Lf \|_{L^{p'}(P)} \|b^1\|_{L^{p}(P^1_i)} \lesssim \mu(P^1_i),
\end{align*}
where the last line is obtained applying the Hardy inequality \eqref{2etoile}, the boundedness of $L$, and \eqref{Hofmann} for $b^1$ on $P^1_i$. Only the third term $\Sigma_3$ remains. This is the term for which we will need to use the stronger weak boundedness property \eqref{WBP1} (and it is the only time in the proof we use it!). We do not see how to estimate this term without this extra property. As for $\Sigma_2$, write
\[ \Sigma_3 = \!\!\!\!\!\! \sum_{P^2_j \subset P \atop \rho(P^2_j, P^1_i) < l(P^2_j)}{\!\!\!\!\![Lf]_{P^2_j} \langle b^2 1_{P^2_j}, T(b^1 1_{P^1_i}) \rangle} = \left \langle b^2 \left (  \sum_{P^2_j \subset P \atop \rho(P^2_j, P^1_i) < l(P^2_j)} {[Lf]_{P^2_j}1_{P^2_j}}  \right ), T(b^1 1_{P^1_i}) \right \rangle. \]
Apply \eqref{WBP1} to get 
\begin{align*}
| \Sigma_3 | & \lesssim  \left \|  \sum_{P^2_j \subset P} {[Lf]_{P^2_j}1_{P^2_j}}  \right \|_{\nu} \mu(P^1_i)^{\frac{1}{\nu'}} \lesssim \left ( \sum_{P^2_j \subset P}{\mu(P^2_j) [|Lf|^{\nu}]_{P^2_j}   }   \right ) ^{1/{\nu}}  \mu(P^1_i)^{\frac{1}{\nu'}}\\
& \lesssim \| Lf \|_{L^{\nu}(P)} \ \  \mu(P^1_i)^{\frac{1}{\nu'}}  \lesssim \mu(P^1_i),
\end{align*}
where once again we have used the boundedness of $L$ given by Lemma \ref{Mart2}. It remains only to sum over the neighbors $P$ of $P^1_i$, which come in a uniformly bounded number, and then over the $P^1_i$ (recall that they realize a $1$-packing of $Q_0$), to obtain as desired
\[ | \mathrm{II} |  \lesssim \sum_i{\mu(P^1_i)} \lesssim \mu(Q_0).\]

\paragraph{\textbf{Estimate of $\langle f, V_{1,3} g \rangle$}}This is the only term left to estimate to complete the proof. For $Q$ spa $2$ and pa $1$, and $R$ pa $1$ of the same generation, set
\[ \beta_{Q,R} = \begin{cases}
\langle f, \phi_Q^2 \rangle \langle b^2 \phi^2_Q , T(b^1 1_R) \rangle \quad & \mathrm{if} \ \ Q \neq R \\
 \langle f, \phi_Q^2 \rangle \langle b^2 \phi^2_Q , T(b^1 (1_Q - \sum_{R' \, \mathrm{pa} \, 1 \atop l(R') = l(Q)}{1_{R'}})) \rangle \quad &  \mathrm{if} \ \ Q = R,
 \end{cases}\\ \]
so that the definition of $V_{1,3}$ rewrites
\[ \langle f, V_{1,3} g \rangle = \sum_{Q  \, \mathrm{spa} \, 2 \atop  \mathrm{and} \, \mathrm{pa} \, 1 } \sum_{ R  \, \mathrm{pa} \, 1  \atop l(R) = l(Q)}{\beta_{Q,R} \ \ \frac{[g]_R}{[b^1]_R}}  .\]
Observe that for any fixed spa $2$ and pa $1$ cube $Q$, 
\[ \sum_{R  \, \mathrm{pa} \, 1  \atop l(R) = l(Q)}{\beta_{Q,R}} = 0.\] 
Recall that for any pa $1$ cube $R$, $[g]_R = [\Pi^1 g]_R$ by \eqref{Projec2} of Lemma \ref{Projec}, allowing us to replace $[g]_R$ by  $[\Pi^1 g]_R$ in the above sum. Recall also that
\[ \Pi^1 g = E_0^1 g + \sum_{l \geq 0}{D^1_l g}. \]
Fix a pa $1$ cube $R$. As $b^1 \Delta^1_S g $ and $\xi^1_S g$ have mean $0$, it is clear that $[D^1_l g]_R = 0$ as soon as $ l \leq l(R)$, which implies $[\Pi^1 g]_R = [E_0^1 g]_R + \sum_{l > l(R)}{[D^1_l g]_R}.$ Note that $[E_0^1 g]_R = [g]_{Q_0}[b^1]_R$. Furthermore, if $l > l(R)$, there is a unique cube $S_R$ of generation $l$ such that $R \subset S_R$. If $S_R$ is spa $1$, then $[D^1_l g]_R = [b^1 \Delta^1_{S_R} g ]_R = [b^1 ]_R \langle g, \phi^1_{S_R} \rangle [\phi^1_{S_R} ]_R$, since $\Delta^1_{S_R} g$ is constant on the children of $S_R$ and therefore on $R$ as well. Else, $S_R$ is dpa $1$ ($R$ being pa $1$ it cannot be contained in a $P^1_i$), and then $[D^1_l g]_R = [\xi^1_{S_R} g]_R$. Consequently, we can write
\begin{align*}
\langle f, V_{1,3} g \rangle & = \sum_{Q  \, \mathrm{spa} \, 2 \atop  \mathrm{and} \, \mathrm{pa} \, 1 } \sum_{ R  \, \mathrm{pa} \, 1  \atop l(R) = l(Q)}{\beta_{Q,R} \ \ \frac{[\Pi^1 g]_R}{[b^1]_R}} \\
& = \sum_{Q  \, \mathrm{spa} \, 2 \atop  \mathrm{and} \, \mathrm{pa} \, 1 } \sum_{ R  \, \mathrm{pa} \, 1  \atop l(R) = l(Q)}{\beta_{Q,R}  \frac{1}{[b^1]_R} \left \{  \sum_{S  \, \mathrm{spa} \, 1 ; l(S) > l(Q) \atop S \supset R, S \neq R}{[b^1 ]_R\langle g, \phi^1_S \rangle [\phi^1_S]_R}    \right \} }\\
& +  \sum_{Q  \, \mathrm{spa} \, 2 \atop  \mathrm{and} \, \mathrm{pa} \, 1 } \sum_{ R  \, \mathrm{pa} \, 1  \atop l(R) = l(Q)}{\beta_{Q,R}  \frac{1}{[b^1]_R} \left \{  \sum_{S  \, \mathrm{dpa} \, 1 ; l(S) > l(Q) \atop S \supset R, S \neq R}{[\xi^1_{S} g ]_R}    \right \} } 
+  \sum_{Q  \, \mathrm{spa} \, 2 \atop  \mathrm{and} \, \mathrm{pa} \, 1 } {[g]_{Q_0 } }\sum_{ R  \, \mathrm{pa} \, 1  \atop l(R) = l(Q)}{\beta_{Q,R}  }.\\
\end{align*}
The last sum is equal to zero because of the summing property of the $\beta_{Q,R}$ we stated above. We have thus decomposed our term into two parts: $\langle f, V_{1,3} g \rangle = \langle f, V_{1,3,1} g \rangle + \langle f, V_{1,3,2} g \rangle.$

\medskip
\subparagraph{\textit{Estimate of $\langle f, V_{1,3,1} g \rangle$}}Inverting the sums over $R$ and $S$, one gets
\[ \langle f, V_{1,3,1} g \rangle =   \sum_{Q  \, \mathrm{spa} \, 2 \atop  \mathrm{and} \, \mathrm{pa} \, 1 } \sum_{ S  \, \mathrm{spa} \, 1  \atop l(S) > l(Q)} \sum_{R  \, \mathrm{pa} \, 1 ; l(R) = l(Q) \atop R \subset S, R \neq S}{\beta_{Q,R} \langle g, \phi^1_S \rangle [\phi^1_S]_R  }. \]
Next, we want to use the pseudo-annular decomposition used in \cite{AY}. To achieve this, let us introduce some more notation. For $N \in \N$, $Q$ spa $2$, $S$ spa $1$ with $l(S) > l(Q)$, and $R$ pa $1$ with $l(R) = l(Q)$ and $R \subset S, R \neq S$, set
\[  \beta^N_{Q,S,R} = \begin{cases}
\beta_{Q,R} \quad & \mathrm{if} \ \ \rho(Q,R) \sim \delta^{-N+1}l(R) \\
 - \sum_{R \, \mathrm{pa} \, 1 ; l(R) = l(Q) \atop {R \subset S ; R \neq S \atop \rho(R,Q) \sim \delta^{-N+1}l(R)}}{\beta_{Q,R}} \quad & \mathrm{if} \ \ Q = R \\
 0 \quad & \mathrm{else.}
 \end{cases}\\ \]
 where as before $\rho(R,Q) \sim \delta^{-N+1}l(R)$ means $ \delta^{-N+1}l(R) \leq \rho(R,Q) <  \delta^{-N}l(R)$. Obviously, for $N = 0$, the cubes $R$ such that $\rho(Q,R) \sim \delta l(R)$ simply are the cubes $R$ at distance less than $l(R)$ of $Q$. Remark that everything has been done so that for all $N \in \N$, for fixed $Q$ and $S$,
 \[ \sum_{R \, \mathrm{pa} \, 1 ; l(R) = l(Q) \atop R \subset S ; R \neq S }{\beta^N_{Q,S,R}} = 0. \]
Now, set
 \[ \gamma^N_{Q,S} =  \sum_{R \, \mathrm{pa} \, 1 ; l(R) = l(Q) \atop R \subset S ; R \neq S }{\beta^N_{Q,S,R} \langle g, \phi^1_S \rangle [\phi^1_S]_R }, \]
and
\[ \langle f, V^N_{1,3,1} g \rangle = \sum_{Q  \, \mathrm{spa} \, 2 \atop  \mathrm{and} \, \mathrm{pa} \, 1 } \sum_{ S  \, \mathrm{spa} \, 1  \atop l(S) > l(Q)}{ \gamma^N_{Q,S}}, \]
so that $V_{1,3,1} = \sum_{N \in \N}{V^N_{1,3,1}}.$
We will prove that for all $N \geq 0$
\[ | \langle f, V^N_{1,3,1} g \rangle | \lesssim (N+1) \delta^{N \alpha } \mu(Q_0),  \]
and it will give us the expected bound for $V_{1,3,1}$ by summing over $N \in \N$. To prove this result, we split the sum into two parts depending on the relative sizes of $Q$ and $S$:
\[  \langle f, V^N_{1,3,1} g \rangle = \sum_{Q  \, \mathrm{spa} \, 2 \atop  \mathrm{and} \, \mathrm{pa} \, 1 } \sum_{ S  \, \mathrm{spa} \, 1  \atop \delta^{N+3} < \frac{l(Q)}{l(S)} \leq \delta } {\!\!\!\!\!\!\gamma^N_{Q,S}} +  \sum_{Q  \, \mathrm{spa} \, 2 \atop  \mathrm{and} \, \mathrm{pa} \, 1 } \sum_{ S  \, \mathrm{spa} \, 1  \atop \frac{l(Q)}{l(S)} \leq \delta^{N+3}}{\!\!\!\!\!\! \gamma^N_{Q,S}} = \mathrm{I} + \mathrm{II}. \]
For the first term, fix cubes $Q$ and $S$. Remark that for a cube $R \neq Q$, $\beta^N_{Q,S,R} \neq 0 \Rightarrow \rho(Q,R) \geq \delta^{-N+1}l(R)$, so that applying the coefficient estimate \eqref{eqn:5}, one gets
\begin{align*}
| \beta^N_{Q,S,R} | \lesssim   | \langle f, \phi_Q^2 \rangle | \, \alpha_{Q,R} \,  \mu(R)^{\frac{1}{2}} &  \lesssim  | \langle f, \phi_Q^2 \rangle | \, \mu(Q)^{\frac{1}{2}} \frac{\mu(R)}{\mu(Q,R)} \left ( \frac{l(Q)}{\rho(Q,R)}  \right )^{\alpha} \\
& \lesssim  | \langle f, \phi_Q^2 \rangle | \, \mu(Q)^{\frac{1}{2}} \delta^{N \alpha} \frac{\mu(R)}{\mu(Q,R)}.\\
\end{align*}
Remark also that we have $ | \langle g, \phi^1_S \rangle [\phi^1_S]_R | \lesssim  | \langle g, \phi^1_S \rangle | {\mu(S)}^{-\frac{1}{2}}$ as a consequence of properties $(1)$ and $(2)$ of Lemma \ref{representation}. Finally, observe that $\gamma^N_{Q,S} \neq 0 \Rightarrow \rho(Q,S) \leq \delta^{-N}l(Q)$, because else all the $\beta^N_{Q,S,R}$ are equal to zero.
Now, combining all this, write
\begin{align*}
| \mathrm{I} | & \lesssim \delta^{N \alpha} \sum_{Q  \, \mathrm{spa} \, 2} \sum_{ S  \, \mathrm{spa} \, 1  \atop {\delta^{N+3} < \frac{l(Q)}{l(S)} \leq \delta \atop \rho(Q,S) \leq \delta^{-N}l(Q)}}{ | \langle f, \phi_Q^2 \rangle ||\langle g, \phi^1_S \rangle|\left ( \frac{\mu(Q)}{\mu(S)} \right )^{\frac{1}{2}} }  \sum_{R  \, \mathrm{pa} \, 1  \atop{ l(R) = l(Q) \atop { R \subset S \atop \rho(R,Q) \sim \delta^{-N}l(Q)}}} { \frac{\mu(R)}{\mu(Q,R)}  }\\
& \lesssim  \delta^{N \alpha} \, S_1^{1/2} \times S_2^{1/2},\\
\end{align*}
where 
\[ S_1 =  \sum_{Q  \, \mathrm{spa} \, 2} \sum_{ S  \, \mathrm{spa} \, 1  \atop {\delta^{N+3} < \frac{l(Q)}{l(S)} \leq \delta \atop \rho(Q,S) \leq \delta^{-N}l(Q)}}{{ | \langle f, \phi_Q^2 \rangle |}^2 } \sum_{R  \, \mathrm{pa} \, 1  \atop{ l(R) = l(Q) \atop{ R \subset S \atop \rho(R,Q) \sim \delta^{-N}l(Q)}}} {  \frac{\mu(R)}{\mu(Q,R)}  }  \]
\[  S_2 =  \sum_{Q  \, \mathrm{spa} \, 2} \sum_{ S  \, \mathrm{spa} \, 1  \atop {\delta^{N+3} < \frac{l(Q)}{l(S)} \leq \delta \atop \rho(Q,S) \leq \delta^{-N}l(Q)}}{{|\langle g, \phi^1_S \rangle|}^2 \, \frac{\mu(Q)}{\mu(S)} }  \sum_{R  \, \mathrm{pa} \, 1  \atop{ l(R) = l(Q) \atop { R \subset S \atop \rho(R,Q) \sim \delta^{-N}l(Q)}}} {  \frac{\mu(R)}{\mu(Q,R)}  } . \]     
Let us first consider $S_1$. Observe that for a fixed cube $Q$ of center $z_Q$, if $\rho(Q,R) \sim \delta^{-N}l(Q)$, then $\mu(Q,R) \approx \mu(B(z_Q, \delta^{-N}l(Q)))$. Therefore
\[  \sum_{R  \, \mathrm{pa} \, 1  \atop{ l(R) = l(Q) \atop { R \subset S \atop \rho(R,Q) \sim \delta^{-N}l(Q)}}} {\!\!\!\!\!\!\!\!\! \!\!\!\!\!  \frac{\mu(R)}{\mu(Q,R)}  } \lesssim \frac{1}{\mu(B(z_Q, \delta^{-N}l(Q)))} \sum_{R \subset S \atop l(R) = l(Q)} \mu(R) \lesssim \frac{\mu(S)}{\mu(B(z_Q, \delta^{-N}l(Q)))}. \]
Then, the cubes $S$ of a fixed generation, at a distance less than $\delta^{-N}l(Q)$ of $Q$, and of a diameter not exceeding $\delta^{-N-3}l(Q),$ pave a ball of a measure comparable to $\mu(B(z_Q, \delta^{-N}l(Q)))$. So, summing over those cubes $S$, and then summing over those $N+3$ generations, we get
\[ S_1 \lesssim (N+1)  \sum_{Q  \, \mathrm{spa} \, 2}{{ | \langle f, \phi_Q^2 \rangle |}^2 } \lesssim (N+1) \|f\|^2_{L^2(Q_0)} \lesssim (N+1) \mu(Q_0). \] 
For $S_2$, it is the same idea. Observe that for a fixed cube $S$ of center $z_S$, if  $\rho(Q,R) \sim \delta^{-N}l(Q)$, then $\mu(Q,R) \approx \mu(B(z_R, \delta^{-N}l(Q)))$, where $z_R$ is the center of the cube $R$. But since $S$ has a diameter not exceeding $\delta^{-N-3}l(Q),$, we even have $\mu(Q,R) \approx \mu(B(z_S, \delta^{-N}l(Q)))$. Therefore
\[ \frac{\mu(Q)}{\mu(S)} \sum_{R  \, \mathrm{pa} \, 1  \atop{ l(R) = l(Q) \atop { R \subset S \atop \rho(R,Q) \sim \delta^{-N}l(Q)}}} {\!\!\!\!\!\!\!\!\! \!\!\!\!\!  \frac{\mu(R)}{\mu(Q,R)}  } \lesssim \frac{\mu(Q)}{\mu(B(z_S, \delta^{-N}l(Q)))}.\]
Now, summing once more over cubes $Q$ of the same generation, then summing over those $N+3$ generations, and at last summing over the cubes $S$ spa $1$, we also get
 \[ S_2 \lesssim (N+1) \mu(Q_0), \]
which finally gives us the bound
\[ |\mathrm{I}| \lesssim (N +1)\delta^{N\alpha} \mu(Q_0),\]
for all $N \in \N.$
We now estimate the term
\[ \mathrm{II} = \sum_{Q  \, \mathrm{spa} \, 2 \atop  \mathrm{and} \, \mathrm{pa} \, 1 } \sum_{ S  \, \mathrm{spa} \, 1  \atop \frac{l(Q)}{l(S)} \leq \delta^{N+3}}{\!\!\!\!\!\! \gamma^N_{Q,S}}.  \] 
Fix such cubes $Q$ and $S$. For any cube $R$ strictly contained inside $S$, we know that $\phi^1_S $ is constant over $R$. Denote by $\fr(S)$ the union of the boundaries $\overline{S'}\backslash S'$ of all the children $S'$ of $S$. If $\rho(Q,\fr(S)) > \delta^{-N-1} l(Q),$ then either $Q \cap S = \varnothing$, which implies that all the $\beta^N_{Q,S,R}$ are equal to zero, and $\gamma^N_{Q,S}$ as well, or $Q \subset S$. In this case, if $\beta^N_{Q,S,R} \neq 0$, then $\rho(Q,R) \leq \delta^{-N} l(Q)$, and with the assumption we made it implies that $R$ is in the same child of $S$ as $Q$. Therefore, for all the $\beta^N_{Q,S,R}$ non equal to zero, $[ \phi^1_S ]_R$ has the same value, and $\gamma^N_{Q,S} = [ \phi^1_S  ]_Q \sum_R{\beta^N_{Q,S,R}} = 0.$ Thus $\gamma^N_{Q,S} \neq 0 \Rightarrow \rho(Q,\fr(S)) \leq \delta^{-N-1} l(Q). $
Now, if that condition is satisfied, applying the coefficient estimate \eqref{eqn:5}, we have
\[ |\beta^N_{Q,S,R}|  \lesssim  | \langle f, \phi_Q^2 \rangle | \, \mu(Q)^{\frac{1}{2}} \delta^{N \alpha} \frac{\mu(R)}{\mu(Q,R)}, \]
and 
\begin{align*}
\sum_{R \subset S ; l(R) = l(Q) \atop \rho(R,Q) \sim \delta^{-N}l(Q)}{\!\!\! \! |\beta^N_{Q,S,R}| |[ \Delta^1_S g ]_R|} & \lesssim  | \langle f, \phi_Q^2 \rangle | \, |\langle g, \phi^1_S \rangle| \, \left ( \frac{\mu(Q)}{\mu(S)} \right )^{\frac{1}{2}} \delta^{N \alpha}\!\!\!\!\!\! \sum_{l(R) = l(Q) \atop \rho(R,Q) \sim \delta^{-N}l(Q) }{ \!\!\!\!\!\frac{\mu(R)}{\mu(B(z_Q,\delta^{-N}l(Q)))}}\\
& \lesssim  \delta^{N \alpha}  | \langle f, \phi_Q^2 \rangle | \, |\langle g, \phi^1_S \rangle| \, \left ( \frac{\mu(Q)}{\mu(S)} \right )^{\frac{1}{2}}.\\
\end{align*}
Therefore, if we introduce the set $E^N_{t,S} = \bigcup_{P \in \theta^N_{t,S}} {P}$, where 
\[ \theta^N_{t,S} = \{ P \subset Q_0 ;  l(P) = t l(S) \quad \mathrm{and} \quad \rho(P,\fr(S)) \leq (\delta^{-N-1} t )l(S) \}, \] 
we have
\[ | \gamma^N_{Q,S}| \lesssim  \delta^{N \alpha}  | \langle f, \phi_Q^2 \rangle | \ \ |\langle g, \phi^1_S \rangle| \ \ \left ( \frac{\mu(Q)}{\mu(S)} \right )^{\frac{1}{2}} 1_{Q \subset E^N_{\frac{l(Q)}{l(S)},S}}. \]
Remark that if $x \in P$, with $P \in \theta^N_{t,S}$, then $\rho(x,\fr(S)) \lesssim \rho(P,\fr(S)) + l(P) \lesssim  (\delta^{-N-1} t )l(S) + t l(S)  \lesssim (\delta^{-N-1} t)l(S)$ as $N \geq 0$. Thus, by the small boundary condition $(6)$ of Lemma \ref{cubes}, $\mu(E^N_{t,S} ) \lesssim (\delta^{-N-1}t)^{\eta} \mu(S). $
Let $\beta > 0$ be such that $\eta - 2 \beta >0$, and let $\lambda_{\frac{l(Q)}{l(S)}} = (\delta^{-N-1 +j})^{\beta}$ whenever $\frac{l(Q)}{l(S)}= \delta^j$. Now, by the Cauchy-Schwarz inequality, write
\[ |\mathrm{II}| \lesssim \delta^{N\alpha}\left \{ \!\! \sum_{Q  \, \mathrm{spa} \, 2 \atop  \mathrm{and} \, \mathrm{pa} \, 1 }\!\! \sum_{ S  \, \mathrm{spa} \, 1  \atop \frac{l(Q)}{l(S)} \leq \delta^{N+3}}{\!\!\!\!\!\!\!\!{ | \langle f, \phi_Q^2 \rangle |}^2 \lambda^2_{\frac{l(Q)}{l(S)}} 1_{Q \subset E^N_{\frac{l(Q)}{l(S)},S}}    } \! \right  \}^{\!\! \frac{1}{2}} \!\! \left  \{ \!\!\sum_{Q  \, \mathrm{spa} \, 2 \atop  \mathrm{and} \, \mathrm{pa} \, 1 }\!\! \sum_{ S  \, \mathrm{spa} \, 1  \atop \frac{l(Q)}{l(S)} \leq \delta^{N+3}}  {\!\!\!\!\!\!\!\!{|\langle g, \phi^1_S \rangle|}^2 \lambda^{-2}_{\frac{l(Q)}{l(S)}} \frac{\mu(Q)}{\mu(S)} 1_{Q \subset E^N_{\frac{l(Q)}{l(S)},S}} }\! \right \}^{\!\! \frac{1}{2}} \]
 Observe that for a fixed cube $Q$ and a fixed value of $l(S)$, the number of cubes $S$ such that $Q \subset E^N_{\frac{l(Q)}{l(S)},S}$ is uniformly bounded. Therefore
 \begin{align*}
 \sum_{Q  \, \mathrm{spa} \, 2 \atop  \mathrm{and} \, \mathrm{pa} \, 1 } \sum_{ S  \, \mathrm{spa} \, 1  \atop \frac{l(Q)}{l(S)} \leq \delta^{N+3}}{\!\!\!\!\!\!\!\!{ | \langle f, \phi_Q^2 \rangle |}^2 \lambda^2_{\frac{l(Q)}{l(S)}} 1_{Q \subset E^N_{\frac{l(Q)}{l(S)},S}}    } & \lesssim \sum_{Q  \, \mathrm{spa} \, 2}{{ | \langle f, \phi_Q^2 \rangle |}^2 } \sum_{j \geq N+3}{(\delta^{-N-1+j})^{2 \beta}  } \sum_{S  \, \mathrm{spa} \, 1  \atop \frac{l(Q)}{l(S)} = \delta^j}{1_{Q \subset E^N_{\frac{l(Q)}{l(S)},S}}} \\
 & \lesssim  \sum_{Q  \, \mathrm{spa} \, 2}{{ | \langle f, \phi_Q^2 \rangle |}^2 } \lesssim \mu(Q_0).\\
 \end{align*}
 We can similarly bound the second sum: we have
 \begin{align*}
 \sum_{Q  \, \mathrm{spa} \, 2 \atop  \mathrm{and} \, \mathrm{pa} \, 1 } \sum_{ S  \, \mathrm{spa} \, 1  \atop \frac{l(Q)}{l(S)} \leq \delta^{N+3}}  {\!\!\!\!\!\!\!\!{|\langle g, \phi^1_S \rangle|}^2 \lambda^{-2}_{\frac{l(Q)}{l(S)}} \frac{\mu(Q)}{\mu(S)} 1_{Q \subset E^N_{\frac{l(Q)}{l(S)},S}} } & \lesssim \sum_{S  \, \mathrm{spa} \, 1}{{|\langle g, \phi^1_S \rangle|}^2} \sum_{j \geq N+3}{(\delta^{-N-1+j})^{-2 \beta}}\ \!\!\!\!\! \sum_{Q  \, \mathrm{spa} \, 2 \atop {l(Q)= \delta^j l(S) \atop Q \subset E^N_{\delta^j,S}}} {\!\!\!\frac{\mu(Q)}{\mu(S)}}\\
 & \lesssim \sum_{S  \, \mathrm{spa} \, 1}{{|\langle g, \phi^1_S \rangle|}^2} \sum_{j \geq N+3}{(\delta^{-N-1+j})^{-2 \beta} \ \ \frac{\mu(E^N_{\delta^j,S})}{\mu(S)}}\\
 & \lesssim \sum_{S  \, \mathrm{spa} \, 1}{{|\langle g, \phi^1_S \rangle|}^2} \sum_{j \geq N+3}{(\delta^{-N-1+j})^{\eta -2 \beta}}\\
 & \lesssim  \sum_{S  \, \mathrm{spa} \, 1}{{|\langle g, \phi^1_S \rangle|}^2} \lesssim \mu(Q_0).\\ 
 \end{align*}
We thus proved that $|\mathrm{II}| \lesssim \delta^{N\alpha} \mu(Q_0)$, and therefore for all $N\in \N$, we have as expected $| \langle f, V^N_{1,3,1} g \rangle | \lesssim (N+1) \delta^{N \alpha } \mu(Q_0).$ Summing over $N \in \N$, this handles the term $\langle f, V_{1,3,1} g \rangle$.
 
 \bigskip
 
\subparagraph{\textit{Estimate of $\langle f, V_{1,3,2} g \rangle$}}Remember that by definition of the "buffer" functions $\xi^1_S g$, if $R \subset S'$, $S' \in \widetilde{S}$, we have $[\xi^1_S g]_R = a_{S'} [b^1]_R$, where $a_{S'}$ denotes the coefficient uniformly bounded by $\|g\|_{\infty} \leq 1$ introduced in Lemma \ref{Hofmann}. Therefore
\begin{align*}
\langle f, V_{1,3,2} g \rangle & = \sum_{Q  \, \mathrm{spa} \, 2 \atop  \mathrm{and} \, \mathrm{pa} \, 1 } \sum_{ S  \, \mathrm{dpa} \, 1  \atop l(S) > l(Q)} \sum_{S' \in \widetilde{S}}\sum_{R  \, \mathrm{pa} \, 1 ; l(R) = l(Q) \atop R \subset S'}{\beta_{Q,R} \ \ a_{S'}  } \\
 & = \sum_{Q  \, \mathrm{spa} \, 2 \atop  \mathrm{and} \, \mathrm{pa} \, 1 } \sum_{ S  \, \mathrm{dpa} \, 1  \atop l(S) > l(Q)} \sum_{S' \in \widetilde{S}}{a_{S'}\ \ \mu(S)^{\frac{1}{2}} }\sum_{R  \, \mathrm{pa} \, 1 ; l(R) = l(Q) \atop R \subset S'}{\beta_{Q,R} \ \ \mu(S)^{-\frac{1}{2}} }.\\
 \end{align*}
Remark that by the $C_X-$packing property \eqref{packing2} of the dpa $1$ cubes,
 \[  \sum_{ S  \, \mathrm{dpa} \, 1 } \sum_{S' \in \widetilde{S}}{\left | a_{S'}\ \ \mu(S)^{\frac{1}{2}} \right |^2} \lesssim \sum_{ S  \, \mathrm{dpa} \, 1 }{\mu(S)} \lesssim \mu(Q_0). \]
That is all we need to apply exactly the same argument we applied to $\langle f, V_{1,3,1} g \rangle$, \emph{i.e.} applying the pseudo-annular decomposition of $V_{1,3,2}$ into operators $V^N_{1,3,2}$, estimating those operators, and then summing over $N \in \N$. Indeed, the crucial point was that we could bound the coefficients $ |\langle g, \phi^1_S \rangle [ \phi^1_S  ]_R|$ by $|\langle g, \phi^1_S \rangle| \, {\mu(S)}^{-\frac{1}{2}}$, and then that the $|\langle g, \phi^1_S \rangle|$ were square summable over the spa $1$ cubes $S$. We have the same square summability property here, and everything works out in the same way, so that though we will omit the detail, we obtain the expected estimate $| \langle f, V_{1,3,2} g \rangle | \lesssim \mu(Q_0).$
This completes the proof of Theorem \ref{AR}.
 
 \medskip
 
 \section{Two particular cases}
 
 \medskip
 
 \subsection{Perfect dyadic operators}
 
The notion of perfect dyadic operators was introduced in \cite{AHMTT}. We recall the definition.
 \begin{definition}{Perfect dyadic singular integral operators (pdsio).\\}
We say that $T$ is a perfect dyadic singular integral operator on $X$ if it satisfies the following properties:
\begin{enumerate}
\item $T$ is a linear continuous operator from $D_{\alpha}$ to $D'_{\alpha}$.
\item $T$ has a kernel $K$ satisfying the size condition \eqref{standard1}.
\item $\langle g , Tf \rangle$ is well defined for pairs of functions $(f,g) \in L^p_c(X) \times L^{p'}_c(X)$ for $1<p<+\infty$ and if, furthermore, they are integrable with support on disjoint dyadic cubes (up to a set of measure $0$)
\[ \langle g,Tf \rangle = \int \!\! \int_{X\times X}{g(x)K(x,y)f(y)d\mu(x)d\mu(y)}. \]
\item For all $(f,g)$ as above, if $f$ has support in a dyadic cube $Q$ and mean $0$, then $\langle g,Tf \rangle =0$ whenever the support of $g$ does not meet $Q$ (up to a set of measure $0$).
\end{enumerate}
\end{definition}

A pdsio has in a sense its singularity adapted to the dyadic grid. As was proven in \cite{AHMTT} in the setting of the real line (but there is no difficulty adapting it to a space of homogeneous type), pdsio satisfy Theorem \ref{AR} without having to suppose conditions \eqref{WBP1}, \eqref{WBP2}, and with \eqref{accretive3bis} instead of \eqref{accretive3}. Looking at our proof, it is easy to recover this result. Indeed, most of the terms we had to estimate simply vanish if one considers pdsio, and only the diagonal terms remain, which makes the proof a lot more straightforward. Here is how one estimates those diagonal terms. Consider for example the term
\[   \langle  b^2 \phi_Q^{b^2} , T(b^1  \phi_Q^{b^1} )    \rangle  = \sum_{Q' \in \widetilde{Q},R' \in \widetilde{Q}}{[ \phi_Q^{b^2}   ]_{Q'} \langle b^2 1_{Q'}  , T(b^1 1_{R'})  \rangle [\phi_Q^{b^1}  ]_{R'}  },\]
when $Q$ is a spa $1$ and spa $2$ cube. It remains to compute $\langle b^2 1_{Q'}  , T(b^1 1_{R'})  \rangle$. If $Q' \neq R'$, one cannot use the Hardy type inequality \eqref{Hardy1} directly because the exponents $p,q$ can happen to be uncompatible if $1/p+1/q >1$, but one can circumvent this as follows: observe that $\int{(b^1 1_{R'} - [b^1]_{R'}1_{R'})d\mu } = \int{(b^21_{Q'} - [b^2]_{Q'} 1_{Q'})d\mu} = 0$, so that
\begin{equation} \label{pdsio}
\langle b^2 1_{Q'} , T(b^1 1_{R'}) \rangle = \langle b^2 1_{Q'} , T(1_{R'}) \rangle [b^1]_{R'} = [b^2]_{Q'} \langle 1_{Q'} , T(1_{R'}) \rangle [b^1]_{R'}.
\end{equation}
Now, we can apply \eqref{Hardy1} because $ 1_{Q'},1_{R'} \in L^{\infty}(X)$. If $Q' =R'$, using the fact that $T$ is perfect dyadic, write
 \begin{align*} 
 \langle b^2 1_{Q'} , T(b^1 1_{Q'}) \rangle  & =  \ \  \langle b^2 1_{Q'} - [b^2]_{Q'}b^2_{Q'}, T(b^1 1_{Q'}) \rangle + [b^2]_{Q'} \langle  b^2_{Q'}, T(b^1 1_{Q'} - [b^1]_{Q'}b^1_{Q'})  \rangle\\
 & \ \ \ \ + [b^2]_{Q'} [b^1]_{Q'} \langle  b^2_{Q'}, T(b^1_{Q'})  \rangle \\
 &  = \ \  \langle b^2 1_{Q'} - [b^2]_{Q'}b^2_{Q'}, T(b^1 ) \rangle +  [b^2]_{Q'} \langle  T^{\ast}(b^2_{Q'}), b^1 1_{Q'} - [b^1]_{Q'}b^1_{Q'}  \rangle\\
 & \ \ \ \ + [b^2]_{Q'} [b^1]_{Q'} \langle  b^2_{Q'}, T(b^1_{Q'})  \rangle,\\
 \end{align*}
 and all those terms are easily estimated applying \eqref{accretive2}, \eqref{accretive3} (on same $Q'$) and the fact that $Q'$ is pa $1$ and pa $2$. We thus always have
 \[ | \langle b^2 1_{Q'} , T(b^1 1_{R'}) \rangle | \lesssim \mu(Q')^{1/2} \mu(R')^{1/2} \lesssim \mu(Q'), \]
because these cubes have comparable measure as they are children of the same cube. Consequently, using the $L^{\infty}$ estimates of the functions $\phi_Q^{b^2}, \phi_Q^{b^1}$ seen in Section $5.1$, we still have the estimate
 \[ \left |    \langle  b^2  \phi_Q^{b^2} , T(b^1  \phi_Q^{b^1} )    \rangle  \right |  \lesssim  1, \]
and all this argument remains valid when $1/p + 1/q >1$. All the other terms can be treated in the same way: to put it roughly, when considering pdsio and estimating terms where one had to use \eqref{WBPC}, one can always reduce to considering $L^{\infty}$ functions by substracting the mean value of the functions involved, and then apply the Hardy inequality \eqref{Hardy1} to those $L^{\infty}$ functions. Similarly, one needs not suppose an integrability of $T(b^i_Q)$ over $\widehat{Q}$, an integrability condition over $Q$ suffices. Putting all this together, we recover Theorem $6.8$ of \cite{AHMTT}, with a different proof.

\begin{remark} 
The identity \eqref{pdsio} shows that, whenever $T$ is a pdsio, if $Q \neq R$ are neighbors and $\supp{f} \subset Q, \supp{g} \subset R$, then for all $1\leq p,q \leq +\infty$, we have
\[ |\langle Tf,g \rangle | \leq | \langle T1_{Q}, 1_{R} \rangle | \,  |[f]_{Q}| \, |[g]_{R}| \leq C \mu(Q)^{1-\frac{1}{p} - \frac{1}{q}} \|f\|_p \|g\|_q . \]
Such an inequality is wrong when $1/p +1/q>1$ for standard sio. This indicates that pdsio do not approximate well standard sio in this range of exponents. 
\end{remark}

\subsection{The case $1/p + 1/q \leq 1$}
We have stated in Section $3$ that when $1/p +1/q \leq 1$, Theorem \ref{AR} is valid under the hypotheses \eqref{accretive1}, \eqref{accretive2} and \eqref{accretive3bis}. This is Theorem $3.4$ of \cite{AY}. Let us prove this statement. We already pointed out that in that particular case, hypotheses \eqref{accretive3} and \eqref{WBP1} come as a consequence of \eqref{accretive1}, \eqref{accretive2} and \eqref{accretive3bis} (see Proposition \ref{particular}). However we cannot directly show \eqref{WBP2}, which we obviously need in order to estimate all the diagonal terms. The proof thus has to be adapted slightly. More precisely, we need to include a stronger control in stopping time Lemma \ref{Hofmann}: we need to control also the maximal function (non dyadic) of $b^1$, as we want the means $[|Mb^1|^p]_{\widehat{Q}}$ to be uniformly bounded on the pa $1$ cubes $Q$. To achieve this, the stopping time has to be on whether
\[ (i) \quad [|b^1|]_Q < \delta \] or \[(ii) \quad  [|T(b^1)|^{q'}]_Q + \sup_{E}{[|Mb^1|^p]_{E}} >C,\] 
for dyadic subcubes $Q$ of $Q_0$ for appropriately chosen $\delta>0$ and $C<+\infty$, and where the supremum runs over the unions $E$ of dyadic cubes such that $Q \subset E  \subset \widehat{Q}$. This can easily be done and we refer to \cite{Hofmann} for the detail. Once having this additional control, the diagonal terms of \eqref{WBP2} can be estimated as follows. If $R$ is a pa $1$ and pa $2$ dyadic cube, write
\[ \langle b^2 1_{R}, T(b^1 1_{R}) \rangle = [b^2]_{R} \langle T^{\ast} (b^2_{R}) , b^1 1_{R} \rangle  + \langle h, T(b^1) \rangle - \langle h, T(b^1 1_{\widehat{R} \backslash R}) \rangle - \langle h, T(b^1 1_{{\widehat{R}}^c}) \rangle, \]
where $h= b^2 1_{R} - [b^2]_{R} b^2_{R}.$ Remark that by the controls given by the stopping time, we have $\|h\|^q_q \lesssim \mu(R).$
For the first term, use \eqref{accretive3} for $T^{\ast}(b^2_{R})$ on $R$, \eqref{Hofmann3} for $b^1$ and $b^2$ on $R$, and $p'\leq q$:
\[ | [b^2]_{R} \langle T^{\ast} (b^2_{R}) , b^1 1_{R} \rangle | \lesssim \| T^{\ast} (b^2_{R})  \|_{L^{p'}(R)} \| b^1 \|_{L^p(R)} \lesssim \mu(R).  \]
For the same reasons, the second term is also bounded:
\[ | \langle h, T(b^1) \rangle  | \lesssim \|h\|_q \|T(b^1) \|_{L^{q'}(R)} \lesssim \mu(R). \]
Apply \eqref{Hardy1} to the third term, $q' \leq p$, and the control on $Mb^1$ given by the stopping-time to get
\begin{align*} 
|  \langle h, T(b^1 1_{\widehat{R} \backslash R}) \rangle | & \lesssim \|h\|_q \| T(b^1 1_{\widehat{R} \backslash R}) \|_{L^{q'}(R)}  \lesssim \|h\|_q \| b^1 \|_{L^{q'}(\widehat{R})} \\
& \lesssim   \|h\|_q \|Mb^1\|_{L^{q'}(\widehat{R})}\lesssim  \mu(R).
\end{align*} 
Finally, for the last term, since $h$ has mean $0$, we can apply \eqref{standard estimate dyadic} to get
\begin{align*}  
|\langle h, T(b^1 1_{{\widehat{R}}^c}) \rangle | & \lesssim \int_{x \in R}{|h(x)|} \int_{y \in \widehat{R}^c}{|b^1(y)| \left ( \frac{l(R)}{\rho(x,y)} \right ) ^{\alpha} \frac{1}{\lambda(x,y)} d\mu(y) d\mu(x) }\\  
& \lesssim  \int_{x \in R}{|h(x)|}  \sum_{j\geq 0}{2^{-j \alpha}} \int_{2^j l(R) \leq \rho(x,y) < 2^{j+1}l(R)}{|b^1(y)| \frac{1}{\lambda(x,y)} d\mu(y) d\mu(x) }\\
& \lesssim \int_{x \in R}{|h(x)|}  \sum_{j\geq 0}{2^{-j \alpha}} \frac{1}{\mu(B(x, 2^{j+1} l(R)))}  \int_{B(x, 2^{j+1} l(R))}{|b^1(y)| d\mu(y) d\mu(x) }\\
& \lesssim \int_{x \in R}{|h(x)| \, |Mb^1(x)| d\mu(x)} \lesssim \| h \|_{L^{q}(R)} \| Mb^1 \|_{L^{q'}(R)} \lesssim \mu(R),
\end{align*}
where once again we have used the control on $Mb^1$ given by the stopping time and $q' \leq p$. Summing up, we obtain as desired
\[ | \langle b^2 1_{R}, T(b^1 1_{R}) \rangle | \lesssim \mu(R),\]
for the pa $1$ and pa $2$ cubes $R$. This allows us to estimate all the diagonal terms appearing in our argument, which we previously took care of by applying hypothesis \eqref{WBP2}. It means that when $1/p + 1/q \leq 1$, we do not need to assume \eqref{WBP2}, nor \eqref{WBP1}, and this proves Theorem \ref{AR} under \eqref{accretive1}, \eqref{accretive2} and \eqref{accretive3bis}.

 \medskip

\section{Hardy type inequalities on spaces of homogeneous type}
\medskip

\subsection{A closer look at the Hardy inequalities}We introduced in the previous sections some Hardy type inequalities which we will review in detail now. First, let us give the proof of Lemma \ref{Hardydyadic}.

\begin{proof}[Proof of Lemma \ref{Hardydyadic}]\footnote{We thank J.-M. Martell for the suggestion of using Whitney coverings that led to an improvement of our earlier argument.}
Let $I$ be the integral in \eqref{2etoile}. We prove that for all $1 < r,s < \infty, $ we have
\[ I \lesssim   \left \langle \left (M_\mu (|f|^r) \right )^{1/r} , |g| \right \rangle + \left \langle |f| , \left (M_\mu (|g|^s) \right )^{1/s} \right \rangle  ,\]
and the Hardy-Littlewood maximal theorem then gives the desired result, choosing $1 < r < \nu$ and $1 < s < \nu'$. Without loss of generality, we can assume $f,g \geq 0$. We have
\[ I  =  \int_{x \in Q' \atop \rho (x,Q)>0}{\!\!\! g(x) \! \int_{y \in Q  \atop \rho (y,Q') \leq \rho (x,Q) }{\! \frac{f(y)}{\lambda(x,y)}d\mu(y)d\mu(x)}} 
+ \int_{y \in Q \atop \rho (y,Q')>0}{\!\!\!\! f(y) \! \int_{x \in Q' \atop \rho (y,Q') > \rho (x,Q)}{\!  \frac{g(x)}{\lambda(x,y)}d\mu(x)d\mu(y)}} \]
Indeed, $\mu \left ( \{x \in Q' \, | \, \rho (x,Q) = 0  \} \right ) = \mu \left ( \{y \in Q \, | \, \rho (y,Q') = 0  \} \right ) = 0$, because of the property $(6)$ of Lemma \ref{cubes}: we have for example
\[ \{x \in Q' \, | \, \rho (x,Q) = 0  \} \subset \bigcap_{j \in \N} \{x \in Q' \, |  \, \rho (x,{Q'}^c) \leq 2^{-j} \delta ^k  \}, \]
and all those sets have their measure bounded by $\mu(Q') 2^{-j \eta} \underset{j \to \infty}{\longrightarrow} 0$.
By symmetry, it is enough to estimate the first integral for example, which we call $I_1$. For $x \in Q'$, let $E_x = \left \{ y \in Q \, | \, \rho (y,Q^c) \leq \rho (x,Q) \right \}$. For a fixed $x$ in $Q'$, we prove that for all $1< r < \infty$,
\[ I_1(x) =  \int_{y \in E_x}{K(x,y)f(y)d\mu(y)}   \lesssim M_{\mu}(f^r)(x) ^{1/r}.   \]
Consider the dyadic subcubes $Q'$ of $Q$, which are maximal for the relation $l(Q') \leq \rho (Q',x)$. Call them $Q^l_{\alpha}(x).$ They realize a partition of the cube $Q$, as for every $y \in Q$, there exists a sufficiently small cube $Q_y$ containing $y$ such that $l(Q_y) \leq \rho(x,Q) \leq \rho(x,Q_y),$ and $Q_y$ is then included in one of those maximal $Q^l_{\alpha}(x)$. 
The case where there is a unique cube $Q^l_{\alpha}(x)=Q$ means that $l(Q) \leq \rho(x,Q)$. Then $\lambda(x,y) \geq \mu(B(x,l(Q)))$ while $Q \subset B(x,Cl(Q))$ for some dimensional constant $C$. Hence $I_1(x) \lesssim M_{\mu}f(x)$.
The second case is when $Q$ is not a maximal cube. By maximality, if $Q^{l-1}_{\beta} \subset Q$ is the unique parent of a $Q^l_{\alpha}(x)$, then we have $\rho(x,Q^{l-1}_{\beta}) < l(Q^{l-1}_{\beta}) $, and thus $\rho(x,Q^l_{\alpha}(x)) \lesssim \rho(x,Q^{l-1}_{\beta}) + l(Q^{l-1}_{\beta}) \lesssim l(Q^{l-1}_{\beta}) = \delta^{-1} l(Q^{l}_{\alpha}(x))$. Note that this implies $\rho(x,Q) \leq \rho(x,Q^l_{\alpha}(x)) \lesssim \delta^l$ with implicit constant independent of $x$ and $l$. For a fixed $l$, let 
\[C^l (x) = \bigcup_{\alpha: Q^l_{\alpha}(x) \cap E_x \neq \varnothing}{Q^l_{\alpha}(x)}.\]
Observe that if $y \in Q^l_{\alpha}(x) \cap E_x$, then $\rho(y, (Q^l_{\alpha}(x))^c) \leq \rho(y,Q^c) \leq \rho(x,Q)$. Thus, $\mu(Q^l_{\alpha}(x) \cap E_x) \lesssim \left ( \frac{\rho(x,Q)}{\delta^l} \right )^{\eta} \mu(Q^l_{\alpha}(x))$ by the small boundary property $(6)$ of Christ's dyadic cubes stated in Lemma \ref{cubes}. Summing over the cubes in $C^l(x)$, we have
\[ \mu(C^l (x) \cap E_x) \lesssim \left ( \frac{\rho(x,Q)}{\delta^l} \right )^{\eta} \mu(C^l(x)) \lesssim \left ( \frac{\rho(x,Q)}{\delta^l} \right )^{\eta} \mu(B(x,\delta^l)), \]
the last inequality being a consequence of doubling and the fact that each of the cubes $Q^l_{\alpha}(x)$ in $C^l(x)$ is of length $\delta^l$ and at a distance comparable to $\delta^l$ from $x$, so that $C^l(x) \subset B(x,C\delta^l)$ for some $C$ independent of $l$ and $x$. Now, write
\begin{align*} 
I_1(x) & \lesssim \sum_{l}{\int_{C^l(x) \cap E_x}{\frac{|f(y)|}{\mu(B(x,\rho(x,y)))}d\mu(y) }} \\ 
& \lesssim \sum_{l}{ \frac{1}{\mu(B(x,\delta^l))}  \int_{B(x,C\delta^l)}{|f| 1_{C^l(x) \cap E_x} d\mu }}\\
& \lesssim \sum_{l}{ \left ( \frac{1}{\mu(B(x,\delta^l))} \int_{B(x,C\delta^l)}{|f|^r d\mu}\right )^{1/r} \left ( \frac{\mu(C^l(x)\cap E_x)}{\mu(B(x,\delta^l))} \right )^{\frac{1}{r'}}      }  \\
& \lesssim  M_{\mu}(|f|^r)(x) ^{1/r} \sum_{l}{\left ( \frac{\rho(x,Q)}{\delta^l} \right )^{\frac{\eta}{r'}} } \lesssim M_{\mu}(|f|^r)(x) ^{1/r},\\
\end{align*}
where the third inequality is obtained by applying the Hölder inequality with $r>1$, and the last one comes from the fact that $\rho(x,Q) \lesssim \delta^l$ for the integers $l$ involved in the summation. This concludes our proof.
\end{proof}

%
%
%

Now let us study more precisely the Hardy property \eqref{3etoile} we introduced in Section $4$. It is not completely clear when \eqref{3etoile} is true in a general space of homogeneous type. It obviously depends on how the sets $B$ and $B^c$ see each other in $X$. By analogy with Christ's dyadic cubes, we shall assume that the outer and inner layers $\{ x \in B | \rho(x,B^c) \leq \varepsilon \}$ and $\{ y \in B^c  | \rho(y,B) \leq \varepsilon  \}$ tend to zero in measure as $\varepsilon \rightarrow 0$, and in a scale invariant way.

\begin{definition}{Layer decay and relative layer decay properties.\\}
Let $(X,\rho,\mu)$ be a space of homogeneous type. For a ball $B$ in $X$, denote $B_{\varepsilon} = \{ x \in B  | \rho(x,B^c) \leq \varepsilon  \} \cup \{ y \in B^c  | \rho(y,B) \leq \varepsilon  \}$ the union of the inner and outer layers.
\begin{itemize}
\item We say that $X$ has the layer decay property if there exist constants $\eta>0$, $C<+\infty$ such that for every ball $B=B(z,r)$ in $X$ and every $\varepsilon >0$, we have
\begin{equation} \label{OLD}
\mu(B_{\varepsilon}) \leq C \left (\frac{\varepsilon}{r} \right )^{\eta} \mu(B(z,r)).
\end{equation}  
\item We say that $X$ has the relative layer decay property if there exist constants $\eta>0$, $C<+\infty$ such that for every ball $B=B(z,r)$ in $X$, every ball $B(w,R)$ not containing $z$, and every $\varepsilon >0$, we have
\begin{equation} \label{LUOLD}
\mu\left (B_{\varepsilon}\cap B(w,R) \right ) \leq C \left (\frac{\varepsilon}{R} \right )^{\eta} \mu(B(w,R)).
\end{equation}  
\end{itemize}
\end{definition}
The layer decay property already appeared in \cite{Buck2} (with only $\mu\left (\{ x \in B  | \rho(x,B^c) \leq \varepsilon  \}  \right )$ in the left hand side of \eqref{OLD}). Remark that \eqref{LUOLD} implies \eqref{OLD}. As a matter of fact, let $z \in X, r>0,  0<\varepsilon<r $, denote $B=B(z,r)$ and suppose that $\varepsilon<r/2$ (else the inequality is trivial). By the Vitali covering lemma, if $B_{\varepsilon}$ is non empty, there exist points $w_k \in B_{\varepsilon}$ such that the balls $B(w_k,r/10)$ are mutually disjoint and $B_{\varepsilon}$ is covered by the union $\cup_k B(w_k,r/2)$. Now, write
\begin{align*}
\mu(B_{\varepsilon}) & = \mu(B_{\varepsilon} \cap (\cup_k {B(w_k,r/2)}))  \leq \sum_k{\mu(B_{\varepsilon} \cap B(w_k,r/2))}\\
& \lesssim \sum_k{\left ( \frac{\varepsilon}{r} \right )^{\eta} \mu(B(w_k,r/2)) } \lesssim \left ( \frac{\varepsilon}{r} \right )^{\eta} \sum_k{\mu(B(w_k,r/10))}\\
& \lesssim \left ( \frac{\varepsilon}{r} \right )^{\eta} \mu(B),\\ 
\end{align*}
where the second inequality comes from \eqref{LUOLD}, the third from the doubling property of $\mu$, and the last from the disjointness of the balls $B(w_k,r/10)$ and the fact that they are all inside a ball of measure comparable to $\mu(B)$. 

Remark also that the condition $z \notin B(w,R)$ cannot be dropped in the relative layer decay property (take a very small $B(z,r)$ contained in a large $B(w,R)$). If $z \in B(w,R)$, the only general conclusion could be a bound like $C({\varepsilon}/r)^{\eta} \mu(B(z,r))$.

It turns out that the relative layer decay property constitutes a sufficient condition for the Hardy property \eqref{3etoile}, as is shown by the following proposition.

\begin{prop}\label{LUOLDP}
Let $(X,\rho,\mu)$ be a space of homogeneous type, and suppose that $X$ has the relative layer decay property. Then the Hardy property \eqref{3etoile} is satisfied on $X$.
\end{prop}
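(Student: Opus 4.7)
The plan is to adapt the proof of Lemma \ref{Hardydyadic} almost verbatim, using the relative layer decay property as a substitute for the small boundary condition of Christ's dyadic cubes. Writing $I$ for the integral in \eqref{3etoile} and assuming $f,g \geq 0$, I split $I = I_1 + I_2$ according as $\rho(y,B^c) \leq \rho(x,B)$ or the reverse (the boundary set having measure zero by the layer decay property, which follows from the relative one as noted after the definition). The roles of $f$ and $g$ being symmetric, it suffices to establish, for any fixed $1<r<\infty$, the pointwise estimate
\[ I_1(x) := \int_{y\in B,\,\rho(y,B^c)\leq \rho(x,B)} \frac{f(y)}{\lambda(x,y)}\,d\mu(y) \lesssim M_\mu(f^r)(x)^{1/r}, \qquad x \in 2B\setminus B. \]
Taking $1<r<\nu$, Hölder applied to $\int g\cdot I_1\,d\mu$ and the Hardy--Littlewood maximal theorem would then yield $I_1 \lesssim \|f\|_\nu \|g\|_{\nu'}$, and similarly for $I_2$ by symmetry.

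For the pointwise estimate, fix $x \in 2B\setminus B$ and set $d = \rho(x,B)$. Since $\rho(x,y) \geq d$ for all $y \in B$, I decompose the set $E_x = \{y \in B : \rho(y,B^c)\leq d\}$ along the coronae $C_k(x) = B(x,2^{k+1}d)\setminus B(x,2^kd)$, $k\geq 0$. By doubling, $\lambda(x,y) \approx \mu(B(x,2^k d))$ on $C_k(x)$, and Hölder with exponent $r$ reduces matters to bounding
\[ \mu(E_x\cap B(x,R)) \lesssim (d/R)^{\eta}\,\mu(B(x,R)) \qquad (R=2^{k+1}d), \]
uniformly for $R\geq d$, where $\eta>0$ is the exponent in the relative layer decay property. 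Noting $E_x \subset B_d$, I distinguish two cases: if $z_B \notin B(x,R)$, this is a direct invocation of \eqref{LUOLD} with $w=x$; otherwise $\rho(x,z_B)\leq R$, and since $x \in 2B\setminus B$ gives $r_B \leq \rho(x,z_B) + d$, we get $R \gtrsim r_B$, hence $\mu(B(x,R)) \gtrsim \mu(B)$ by doubling, whereupon the absolute layer decay \eqref{OLD} (derived from \eqref{LUOLD} after the definition) furnishes the required bound.

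Plugging the measure estimate back in, the contribution of $C_k(x)$ is controlled by $M_\mu(f^r)(x)^{1/r}\,2^{-(k+1)\eta/r'}$; summing the geometric series in $k$ proves the pointwise estimate, and the proposition follows. The main subtlety I expect is precisely the case split in the measure estimate: the relative layer decay is only directly applicable when $B(x,R)$ avoids the center of $B$, but in the complementary regime the ball is already comparable in size to $B$, so the absolute layer decay takes over. No new ideas beyond \emph{this} interplay between the two layer decay estimates are needed, and the rest is the familiar dyadic-annular calculation combined with the maximal theorem.
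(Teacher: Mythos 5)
Your overall strategy is the same as the paper's: split $I$ into two symmetric pieces according to which of $\rho(y,B^c)$, $\rho(x,B)$ is smaller, aim for a pointwise bound of the inner integral by a maximal function, decompose over dyadic coronae around $x$, and use layer decay to control the measure of the relevant slice of each corona. The only cosmetic difference is that you keep $x\in 2B\setminus B$ and $y\in B$, while the paper (after relabeling $f,g$) works with $x\in B$; the arguments are mirror images. The case split on whether $z_B\in B(x,R)$ is exactly the crux, since the relative layer decay is only applicable when the center is excluded, and you identify this correctly.

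There is, however, a gap in your treatment of the second case. You claim the absolute layer decay \eqref{OLD} ``furnishes the required bound'' $\mu(E_x\cap B(x,R))\lesssim (d/R)^\eta\mu(B(x,R))$. But \eqref{OLD} applied to $B$ gives $\mu(E_x)\le\mu(B_d)\lesssim (d/r_B)^\eta\mu(B)$, and together with $\mu(B)\lesssim\mu(B(x,R))$ this only yields $(d/r_B)^\eta\mu(B(x,R))$. Since in this case you have established $R\gtrsim r_B$, the factor $(d/r_B)^\eta$ \emph{dominates} $(d/R)^\eta$: the exponent you assert is strictly better than what you have actually proved, and for $R\gg r_B$ doubling alone gives no lower volume growth to close the gap, so the bound as stated can fail. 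Your ``summing the geometric series in $k$'' step is therefore not justified as written. What rescues the argument (and should be made explicit) is that $E_x\subset B\subset B(x,Cr_B)$ for a dimensional $C$, so the coronae $C_k(x)$ with $2^k d\gg r_B$ simply do not meet $E_x$; hence all case-2 indices satisfy $R=2^{k+1}d\approx r_B$, there are only boundedly many of them, and for those $(d/r_B)^\eta\approx(d/R)^\eta$. The paper sidesteps this entirely by a counting argument: in the second case it shows via triangle inequalities that at most four indices $j$ can contribute, and bounds each corresponding corona term by $M_\mu f(x)$ directly, without ever invoking the absolute layer decay. Both treatments work once the implicit upper bound on $R$ is surfaced, so your gap is fixable rather than fatal, but as written the case-2 measure estimate is wrong and the proof incomplete.
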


\begin{proof}
The proof is quite similar to the proof of Lemma \ref{Hardy}, with modifications owing to the fact one cannot use exact coverings with balls as for dyadic cubes. Let again $I$ denote the integral in \eqref{3etoile}. Fix a ball $B$ of center $z$ and radius $r>0$, and functions $f,g$ respectively supported on $2B\backslash B$ and $B$ with $f \in L^{\nu}$, $g \in L^{\nu'}$. As before, we can assume that $f,g\geq 0$ and it suffices to prove that for all $1 < \sigma, s < \infty, $ we have
\begin{equation} \label{inthardy}
I \lesssim   \left \langle \left (M_\mu (f^{\sigma}) \right )^{1/\sigma} , g \right \rangle + \left \langle f , \left (M_\mu (g^s) \right )^{1/s} \right \rangle.
\end{equation}
The hypotheses imply $\mu(\overline{B}\backslash B) = 0$ and allow us to write
\[ I  =  \int_{x \in B \atop \rho (x,B^c)>0}{\!\!\!\! g(x) \! \int_{y \in 2B \backslash B  \atop \rho (y,B) \leq \rho (x,B^c) }{\!\! \frac{f(y)}{\lambda(x,y)}d\mu(y)d\mu(x)}} + \int_{y \in 2B \backslash B \atop \rho (y,B)>0}{\!\! f(y) \! \int_{x \in B \atop \rho (y,B) > \rho (x,B^c)}{\!\! \frac{g(x)}{\lambda(x,y)}d\mu(x)d\mu(y)}}. \]
Let us begin by estimating the first term. Fix $x \in B$. As before, let $E_x = \left \{ y \in 2B \backslash B \, | \, \rho (y,B) \leq \rho (x,B^c) \right \}$. We decompose the integral in $y$ over coronae at distance $2^j \rho(x,B^c)$ from $x$:
\begin{align*}
 I_1(x) & =  \int_{y \in E_x }{\frac{f(y)}{\lambda(x,y)} d\mu(y)}  \\
 & \lesssim \ \  \ \ \sum_{j \geq 0} \ \  \ \ \int_{y \in E_x \atop 2^j \rho(x,B^c) \leq \rho(x,y) < 2^{j+1}\rho(x,B^c)}{\frac{f(y)}{\mu(B(x,\rho(x,y)))}d\mu(y)}\\
& \lesssim  \sum_{j\geq 0 \atop z \notin B(x, 2^{j+1}\rho(x,B^c))} M_{\mu}(f^{\sigma})(x) ^{1/\sigma} \left ( \frac{\mu(E_x \cap B(x,2^{j+1}\rho(x,B^c)))}{\mu( B(x,2^{j+1}\rho(x,B^c)))} \right )^{\frac{1}{\sigma'}}\\ 
& + \sum_{j \geq 0 \atop z \in B(x, 2^{j+1}\rho(x,B^c)))} \frac{1}{\mu(B(x,2^{j+1}\rho(x,B^c)))} \int_{y \in E_x \atop 2^j \rho(x,B^c) \leq \rho(x,y) < 2^{j+1}\rho(x,B^c) }{fd\mu},\\
\end{align*}
where the last inequality is obtained applying the Hölder inequality with $\sigma>1$. Observe that there are at most four integers $j \geq 0$ such that $z \in B(x, 2^{j+1}\rho(x,B^c)))$ and $E_x \cap \{ y \in B^c | 2^j \rho(x,B^c) \leq \rho(x,y) < 2^{j+1}\rho(x,B^c) \} \neq \varnothing$. Indeed, let $j_0$ be the first such integer, which implies $\rho(x,z) \leq 2^{j_0 +1} \rho(x,B^c)$, and let $j\geq j_0$ be another one. Let $y \in E_x \cap \{ y \in B^c | 2^j \rho(x,B^c) \leq \rho(x,y) < 2^{j+1}\rho(x,B^c) \}$. Using $y \in E_x$, we have $\rho(z,y) \leq r + \rho(y,B) \leq r + \rho(x,B^c)$. Also, $r \leq \rho(x,z)+\rho(x,B^c)$. Hence $\rho(z,y) \leq \rho(x,z) +2 \rho(x,B^c)$. Using $2^j \rho(x,B^c) \leq \rho(x,y)$, we obtain
\[ 2^j \rho(x,B^c) \leq \rho(x,y)\leq \rho(x,z) + \rho(z,y) \leq 2 \rho(x,z) +2 \rho(x,B^c) \leq  (2^{j_0 +2}+2) \rho(x,B^c), \]
hence $j \leq j_0 +3.$\\
Consequently, applying \eqref{LUOLD}, we have
\[ I_1(x) \lesssim  M_{\mu}(f^\sigma)(x) ^{1/\sigma} \sum_{j\geq 0} {\left ( \frac{\rho(x,B^c)}{2^{j+1}\rho(x,B^c)}\right )^{\frac{\eta}{\sigma'}}} + 4M_{\mu}f(x) \lesssim M_{\mu}(f^\sigma)(x) ^{1/\sigma}, \]
so the first integral is controlled by $\langle g,  (M_\mu (f^{\sigma})  )^{1/\sigma} \rangle$. The argument for the second integral is entirely similar using the symmetry of our assumptions (and $4$ above becomes $3$). This proves \eqref{inthardy}. Remark that this argument only uses \eqref{LUOLD} for $R \leq 2r$.
\end{proof}
\medskip
\begin{remark}
An interesting remark is that if $f$ and $g$ are taken in $L^{\nu_1}(X)$ and $L^{\nu_2}(X)$ with $1/{\nu_1}+1/{\nu_2} <1$ (and $g$ no longer in $L^{\nu_{1}'}$), then if $I$ denotes the integral in \eqref{3etoile}, the normalised inequality 
\[  \frac{1}{\mu(B)} I   \lesssim \|f\|_{L^{\nu_1}(2B \backslash B, \frac{d\mu}{\mu(B)})}  \|g\|_{L^{\nu_2}(B, \frac{d\mu}{\mu(B)})}   \]
is true in any space of homogeneous type only satisfying \eqref{OLD}. The proof is much easier: let $B=B(z,r)$ be a ball in $X$, $f$ be supported in $2B \backslash B$, with $f \in L^{\nu_1}(2B \backslash B)$, $g$ be supported in $B$ with $g \in L^{\nu_2}(B)$, denote $1/{\nu} = 1 - 1/{\nu_1} - 1/{\nu_2}$, $S_j = \{ x \in B: \rho(x,B^c) \leq 2^{-j} r \}$. Observe that by \eqref{OLD}, $\mu(S_j) \lesssim 2^{-j \eta} \mu(B)$, and splitting the integral in $y$ over coronae at distance $2^{-j}r$ from $x$, write
\begin{align*}
I & \lesssim \sum_{j\geq -2} \int_{x \in B}{|g(x)|}\int_{y \in 2B \backslash B \atop 2^{-j-1} r < \rho(x,y) \leq 2^{-j} r}{\frac{|f(y)|}{\mu(B(x,2^{-j}r ))}d\mu(y)d\mu(x)}\\
& \lesssim \sum_{j\geq -2} \int_{x \in S_j}{|g(x)|M_{\mu}{f}(x) d\mu(x)}  \lesssim  \sum_{j\geq -2} \int_{x \in X}{|g(x)|M_{\mu}{f}(x) 1_{S_j}(x) d\mu(x)} \\
& \lesssim  \| f \|_{\nu_1} \|M_{\mu}g\|_{\nu_2} \sum_{j\geq -2} \mu(S_j)^{1/{\nu}},
\end{align*}
where we have applied the Hölder inequality to get the last line. It remains to sum in $j$ to obtain the desired estimate. The same remark with identical proof holds if $B, 2B\backslash B$ are replaced by $Q, \widehat{Q} \backslash Q$ for $Q$ a dyadic cube.
\end{remark}

\medskip

\subsection{Geometric conditions ensuring the relative layer decay property}For obvious practical reasons, we now would like to find sufficient geometric conditions assuring that \eqref{LUOLD} is satisfied. Let us first give a couple of other definitions.

\begin{definition}{Annular decay and relative annular decay properties.\\}
Let  $(X,\rho,\mu)$ be a space of homogeneous type. For $z \in X$ and $r>0,0<s<r$, let $C_{r,r-s}(z) = B(z,r) \backslash B(z,r-s)$. 
\begin{itemize}
\item We say that $X$ has the annular decay property if there exist constants $\eta>0$ and $C<+\infty$ such that for every $z \in X, r>0,0<s<r$, we have
\begin{equation} \label{AD}
\mu(C_{r,r-s}(z)) \leq C \left (\frac{s}{r} \right )^{\eta} \mu(B(z,r)).
\end{equation}
\item We say that $X$ has the relative annular decay property if there exist constants $\eta>0$ and $C<+\infty$ such that for every $z \in X$, $r>0,0<s<r$, and every ball $B(w,R)$ not containing $z$, we have
\begin{equation} \label{LUAD}
\mu ( C_{r,r-s}(z) \cap B(w,R) ) \leq C \left (\frac{s}{R} \right )^{\eta} \mu(B(w,R)). 
\end{equation}
\end{itemize}
\end{definition}
It is interesting to note that this condition \eqref{AD} was made an assumption in \cite{DJS} for the first proof of the global $Tb$ theorem in a space of homogeneous type. Similarly as for layer decay properties, we have that \eqref{LUAD} implies \eqref{AD}. It is also clear that the relative annular decay property implies the relative layer decay property. In \cite{Buck}, Buckley introduces the notion of chain ball spaces and proves that under that condition, a doubling metric measure space has the annular decay property. Colding and Minicozzi II already had proved that this property was satisfied by doubling complete riemannian manifolds in \cite{CM}. Tessera introduced a notion of monotone geodesic property in \cite{Tess}, and proved that this property also entails the annular decay property (called there the Føllner property for balls) in a doubling metric measure space. Lin, Nakai and Yang recently showed in \cite{LNY} that chain ball and a slightly stronger scale invariant version of the monotone geodesic were equivalent. This motivates us to consider the monotone geodesic property of \cite{LNY} in the following. We recall this property \mbox{here:}
\begin{definition}
Let $(X,\rho)$ be a metric space. We say that $X$ has the monotone geodesic property if there exists a constant $0< C<+\infty$ such that for all $u>0$ and all $x,y \in X$ with $\rho(x,y) \geq u$, there exists a point $z \in X$ such that
\begin{equation} \label{monotone} 
\rho(z,y) \leq Cu \quad \mathrm{and} \quad \rho(z,x) \leq \rho(y,x) - u.
\end{equation}
\end{definition}
Remark that $C$ must satisfy $C\geq 1$. Remark also that iterating this property, one gets that for every $x,y \in X$ with $\rho(x,y) \geq u$, there exists a sequence of points $y_0=y, y_1,..., y_m=x$ such that for every $i \in \{0,...,m-1 \}$
\[ \rho(y_{i+1},y_i) \leq Cu \quad \mathrm{and} \quad \rho(y_{i+1},x) \leq \rho(y_i,x) - u.  \]
It appears that the monotone geodesic property not only yields the annular decay property, but also the stronger relative annular decay property as we next show.
\begin{prop}\label{Delta}
Let $(X,\rho,\mu)$ be a space of homogeneous type, and suppose that $X$ has the monotone geodesic property. Then $X$ has the relative annular decay property.
\end{prop}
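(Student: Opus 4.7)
The goal is to show $\mu(C_{r,r-s}(z) \cap B(w,R)) \lesssim (s/R)^{\eta} \mu(B(w,R))$ when $z \notin B(w,R)$, assuming the monotone geodesic property with constant $C$. I would first reduce to the case $s \leq R/(10C)$, since otherwise the trivial bound $\mu(A) \leq \mu(B(w,R))$ already yields the conclusion. The driving idea is to slice the enlarged ball $\widetilde{B} := B(w,2R)$ along level sets of $y \mapsto \rho(y,z)$. Because $z \notin B(w,R)$ forces $\rho(w,z) \geq R$, this function takes values in an interval of length at most $4R$ on $\widetilde{B}$. Define disjoint radial slices
\[
 \widetilde{S}_j \;=\; \bigl\{ y \in \widetilde{B} : \rho(y,z) \in (r-(j+1)s,\, r-js]\bigr\},
 \qquad j \in \mathbb{Z},
\]
of which at most $N \sim R/s$ are non-empty, with $\sum_j \mu(\widetilde{S}_j) \leq \mu(\widetilde{B}) \lesssim \mu(B(w,R))$ by doubling. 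Note that $A \subset \widetilde{S}_0$.

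The heart of the argument is to establish the comparability $\mu(A) \lesssim \mu(\widetilde{S}_j)$ for every $j=0,1,\ldots,N$, with a constant depending only on $C$ and $C_D$. Summing over $j$ would then give $N \cdot \mu(A) \lesssim \mu(B(w,R))$, i.e.\ $\mu(A) \lesssim (s/R)\mu(B(w,R))$, which is the desired estimate with $\eta = 1$. To obtain this comparability, for each $y \in A$ I would iteratively apply the monotone geodesic property with step $u=s$ to build a chain $y = y^{(0)}, y^{(1)},\ldots,y^{(j)}$ satisfying $\rho(y^{(i)},y^{(i-1)}) \leq Cs$ and $\rho(y^{(i)},z) \leq \rho(y^{(i-1)},z) - s$. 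All chain points remain in $B(w,R+jCs) \subset \widetilde{B}$ provided $j \leq R/(2Cs)$. Combined with a Vitali covering of $A$ by balls $B(x_\alpha,s/10)$ having disjoint $(s/50)$-cores, the shifted balls $B(x_\alpha^{(j)},s/10)$ would cover a portion of the union $\widetilde{S}_j \cup \widetilde{S}_{j+1} \cup \cdots \cup \widetilde{S}_{j+C'}$ (with $C'$ depending only on $C$) of total measure comparable to $\mu(A)$.

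\textbf{Main obstacle.} The delicate point is that for $C>1$ each shift displaces a point by up to $Cs$ while decreasing its distance to $z$ by only $s$. Consequently iterated shifts spread across several consecutive slices rather than landing cleanly in a single one, and distinct Vitali balls can map to overlapping shifted balls, so the naive injection of $\mu$-mass from $A$ into $\widetilde{S}_j$ fails. The crux is therefore a multiplicity estimate: one must show, using only doubling together with the quantitative chain structure provided by the monotone geodesic property, that the overlap of the shifted balls is bounded by a constant depending only on $C$ and $C_D$. Without such a bound, the classical ``geodesic'' argument (which works transparently when $C=1$) degenerates; a naive use of geometric-ratio spacings for the shifts would yield only logarithmic decay, far too weak to produce the polynomial rate $(s/R)^{\eta}$ asserted in the proposition. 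I expect the doubling-based combinatorial control of these chains to be the technical heart of the proof.
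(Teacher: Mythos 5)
You have correctly located the obstruction, but your proposal does not overcome it, and I do not believe the long-chain strategy can be made to work as described. Iterating the monotone geodesic step $j$ times with $j$ as large as $\sim R/s$ entails a cumulative displacement of order $jCs \sim CR$, comparable to the ambient scale, and there is no doubling-only argument that bounds the overlap multiplicity of such long push-forwards: the chain endpoints coming from a large portion of $A$ may focus into a small region (think of a tree converging to $z$), and doubling controls only local overlap, not this global collapse. In addition, the target exponent $\eta=1$ is too strong: the relative annular decay exponent cannot in general exceed the (non-relative) annular decay exponent $\eta_1$, which is a geometry-dependent quantity, and the paper's proof indeed delivers only $\eta_1 - \varepsilon$.

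The paper's argument is genuinely different and sidesteps the multiplicity issue entirely. It applies the monotone geodesic chain only until the first index $i_0$ at which the chain drops one shell deeper (from $C_{r,r-\sigma}(z)$ into $C_{r-3\sigma/2,r-7\sigma/4}(z)$); a short computation shows that this happens after a total displacement of at most $3C_0\sigma$, a fixed multiple of the shell width, uniformly in the starting point. One then runs a single Vitali covering by balls of radius $\sigma/4$ centred at the displaced points, and since the push-forward distance is $\lesssim\sigma$, doubling gives a bounded-multiplicity comparison
\[
\mu\bigl(C_{r,r-\sigma}(z)\cap B(w,R-6C_0\sigma)\bigr)\;\leq\; A\,\mu\bigl(C_{r-\sigma,r-2\sigma}(z)\cap B(w,R)\bigr).
\]
The loss from shrinking $B(w,R)$ to $B(w,R-6C_0\sigma)$ is controlled by invoking the already known \emph{non-relative} annular decay property of the space (Tessera's theorem), an input your proposal does not use. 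Combining the two and telescoping $C_{r-\sigma,r-2\sigma}=C_{r,r-2\sigma}\setminus C_{r,r-\sigma}$ produces the contraction
\[
\mu\bigl(C_{r,r-\sigma}(z)\cap B(w,R)\bigr)\;\leq\;\theta\,\mu\bigl(C_{r,r-2\sigma}(z)\cap B(w,R)\bigr)+C\left(\frac{\sigma}{R}\right)^{\eta_1}\mu\bigl(B(w,R)\bigr),\qquad\theta=\tfrac{A}{1+A}<1.
\]
Iterating this inequality with $\sigma$ doubling at each step requires only $\sim\log_2(R/s)$ iterations rather than $\sim R/s$, and $\theta^{\log_2(R/s)}$ is precisely the power decay $(s/R)^{-\log_2\theta}$. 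Thus the geodesic chain is used only to make a single, short shell jump; the amplification across many shells is achieved analytically by iterating the contraction inequality, not geometrically by iterating the chain, which is how the multiplicity problem you flag is avoided.
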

It immediately yields the following corollary.
\begin{cor}
Let $(X,\rho,\mu)$ be a space of homogeneous type. If $X$ has the monotone geodesic property, then the Hardy property \ref{HardyBalls} is satisfied on $X$.
\end{cor}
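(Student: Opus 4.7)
The plan is to derive this corollary directly, as it is the endpoint of a chain of implications already assembled in the paper. By Proposition \ref{Delta}, the monotone geodesic property implies the relative annular decay property \eqref{LUAD}. Next, as noted in the text immediately after the definition of relative annular decay, \eqref{LUAD} implies the relative layer decay property \eqref{LUOLD}. Finally, by Proposition \ref{LUOLDP}, \eqref{LUOLD} implies the Hardy property of Definition \ref{HardyBalls}. Chaining these three facts yields the corollary.

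The only step that is not already spelled out as a numbered result is the passage from relative annular decay to relative layer decay, so I would include a brief verification. Let $B=B(z,r)$, let $\varepsilon>0$, and let $B(w,R)$ be a ball not containing $z$. If $\varepsilon\geq r$, then $\varepsilon/R$ cannot be too small after absorption into the doubling constant (since $B_\varepsilon\cap B(w,R)\subset B(w,R)$), and the bound \eqref{LUOLD} is trivial. If $\varepsilon<r$, then for any $x$ in the inner layer of $B$ one has $\rho(x,z)\geq r-\varepsilon$, so the inner layer lies in $C_{r,r-\varepsilon}(z)$, and for any $y$ in the outer layer one has $\rho(y,z)\leq r+\varepsilon$, so the outer layer lies in $C_{r+\varepsilon,r}(z)$. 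Hence $B_\varepsilon\subset C_{r+\varepsilon,r-\varepsilon}(z)$, and applying \eqref{LUAD} to this annulus (with $z\notin B(w,R)$) gives
\[
\mu(B_\varepsilon\cap B(w,R))\leq C\left(\frac{2\varepsilon}{R}\right)^{\eta}\mu(B(w,R)),
\]
which is \eqref{LUOLD} up to the multiplicative constant.

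There is no substantial obstacle here: each link of the chain has already been proved or observed in the paper, and the verification above is routine. The corollary is then immediate by combining Proposition \ref{Delta}, the observation just established, and Proposition \ref{LUOLDP}.
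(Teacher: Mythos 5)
Your chain of implications is exactly the paper's proof: the corollary is obtained there precisely by combining Proposition \ref{Delta} with Proposition \ref{LUOLDP}, the intermediate fact that \eqref{LUAD} implies \eqref{LUOLD} being stated (without proof) right after the definition of relative annular decay. Your verification of that intermediate step in the main case $\varepsilon<r$ is correct: the inner layer lies in $C_{r,r-\varepsilon}(z)$ and the outer layer in $B(z,r+2\varepsilon)\setminus B(z,r)$ (take width $2\varepsilon$ rather than $\varepsilon$ to avoid the points with $\rho(y,z)=r+\varepsilon$, which need not belong to the open ball $B(z,r+\varepsilon)$), and \eqref{LUAD} applies to both annuli since $z\notin B(w,R)$.

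The one assertion that does not hold as written is the dismissal of the case $\varepsilon\ge r$: nothing in \eqref{LUOLD} ties $r$ to $R$, so $\varepsilon\ge r$ does not prevent $\varepsilon/R$ from being arbitrarily small, and the inclusion $B_\varepsilon\cap B(w,R)\subset B(w,R)$ alone gives no power of $\varepsilon/R$ in that regime (doubling only bounds small balls from below relative to large ones, not from above). The slip is harmless and easily repaired: either observe that the proof of Proposition \ref{LUOLDP} only invokes \eqref{LUOLD} for layer widths at most comparable to $r$ (and, as remarked there, only for $R\le 2r$), or handle $\varepsilon\ge r$ directly from \eqref{LUAD} by covering the inner layer with $C_{r,r-s}(z)$ and letting $s\to r^-$, which gives $\mu(B(z,r)\cap B(w,R))\lesssim (r/R)^{\eta}\mu(B(w,R))\le(\varepsilon/R)^{\eta}\mu(B(w,R))$, the outer layer being controlled by the annulus $B(z,r+2\varepsilon)\setminus B(z,r)$ exactly as before. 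With that repair your argument coincides with the paper's.
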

\begin{proof}
It is a straightforward application of Proposition \ref{LUOLDP} and Proposition \ref{Delta}.
\end{proof}
\begin{proof}[Proof of Proposition \ref{Delta}]
Let $C_0\geq 1$ be the constant intervening in the monotone geodesic property of the homogeneous type space $X$. We already know that $X$ has the annular decay property (see \cite{Tess} for example), say with constants $C_1, \eta_1>0$. Note that \eqref{AD} is still valid for all $\eta \in [0,\eta_1]$, so that one can make $\eta_1$ as small as we wish. We prove \eqref{LUAD}. The argument adapts the one in \cite{Tess}.  Fix $z \in X$, $r>0,0<s<r$, and $w \in X$, $R>0$ such that $z \notin B(w,R)$. Observe that if $6C_0 s \geq R $, \eqref{LUAD} is trivial, so that we can freely assume $6C_0 s < R$ in the following. We will prove \eqref{LUAD} in two steps. We first show that there exist $\theta <1$ and $C<+\infty$ such that for all $\sigma >0$ with $6C_0 \sigma < R$, $2\sigma < r$ (and $R< \rho(w,z)$), we have
\begin{equation} \label{LUAD1}
\mu(C_{r,r-\sigma}(z) \cap B(w,R)) \leq \theta \mu(C_{r,r-2\sigma}(z) \cap B(w,R)) + C \left ( \frac{\sigma}{R}\right )^{\eta_1} \mu(B(w,R)).
\end{equation}
Assuming these conditions on $\sigma$ are met, we begin by establishing the existence of a dimensional constant $A>0$ such that
\[ \mu(C_{r,r-\sigma}(z) \cap B(w, R-6C_0 \sigma) \leq A\mu(C_{r-\sigma,r-2\sigma} \cap B(w,R)). \]
 If $C_{r,r-\sigma}(z) \cap B(w,R-6C_0 \sigma) = \varnothing$, there is nothing to do, so we will suppose that this intersection is not empty. Let $y \in C_{r,r-\sigma}(z) \cap B(w, R-6C_0 \sigma)$, and let $0<u < \min(\sigma/4,C_0(r-\sigma))$. Since $\rho(z,y) \geq r-\sigma >u/{C_0}$, by monotone geodesicity of $X$ there exist points $y_0 =y, y_1,..., y_m =z$ such that for every $i \in \{ 0,..., m-1 \}$,
\[ \rho(z,y_{i+1}) \leq \rho(z,y_i) - \frac{u}{C_0} \quad \mathrm{and} \quad \rho(y_i,y_{i+1}) \leq u. \]
Let $i_0$ be the first integer such that $y_{i_0} \in B(z, r-3\sigma/2)$. Note that $u<\sigma/4$ implies $i_0 >2.$ Write
\[ r - 3\sigma/2 \leq \rho(z, y_{i_0 -1}) \leq \rho(z,y) - \frac{(i_0 -1)u}{C_0} \leq r -  \frac{(i_0 -1)u}{C_0},  \]
so that $\frac{3\sigma}{2} \geq \frac{(i_0 - 1)u}{C_0}$, and
\[ i_0 u \leq \frac{i_0}{i_0 -1} \frac{3C_0 \sigma}{2} < 3 C_0 \sigma, \]
because $i_0 >2$. Thus $\rho(y, y_{i_0}) \leq i_0 u < 3 C_0 \sigma$ and it follows that $y_{i_0} \in B(w,R-3C_0 \sigma)$. Furthermore,
\[ \rho(z,y_{i_0}) \geq \rho(z,y_{i_0 -1}) - \rho(y_{i_0 -1}, y_{i_0}) \geq r - \frac{3\sigma}{2} - u \geq r - \frac{7\sigma}{4}.\]
It means that for every $y \in C_{r,r-\sigma}(z) \cap B(w, R-6C_0 \sigma)$, there exists $y' \in C_{r-3\sigma/2, r- 7\sigma/4}(z) \cap B(w,R-3C_0 \sigma)$ such that $\rho(y,y') < 3 C_0 \sigma.$ Then, 
\[ B_y = B(y', \sigma/4) \subset C_{r-\sigma,r-2\sigma}(z) \cap B(w,R). \]
Now if we consider the union of all those balls $B_y$ for $y \in C_{r,r-\sigma}(z) \cap B(w,R-6C_0 \sigma)$, by the Vitali covering lemma we can find points $y_k$ such that the balls $B_{y_k}$ are mutually disjoint and $\bigcup_{y}B_y \subset \bigcup_k 5B_{y_k}$. But since for every $k$, $\rho(y_k,y'_k) < 3 C_0 \sigma$, 
\[ C_{r,r-\sigma}(z) \cap B(w, R-6C_0 \sigma) \subset \bigcup_{k}{CB_{y_k}}, \]
with $C= 5 + 12C_0$. Applying the doubling measure $\mu$ and remembering that the $B_{y_k}$ are mutually disjoint sets contained in $C_{r-\sigma,r-2\sigma}(z) \cap B(w,R)$, we obtain
\[ \mu(C_{r,r-\sigma}(z) \cap B(w,R-6C_0 \sigma)) \leq \sum_k{\mu(CB_{y_k})} \leq A \sum_k{\mu(B_{y_k})} \leq A \mu(C_{r-\sigma,r-2\sigma}(z) \cap B(w,R)). \]
Applying the annular decay property inside the ball $B(w,R)$, write then
\begin{align*}
& \mu(C_{r,r-\sigma}(z) \cap B(w,R))  = \mu(C_{r,r-\sigma}(z) \cap B(w,R-6C_0 \sigma)) + \mu(C_{R,R-6C_0 \sigma}(w))\\
& \leq A \mu(C_{r-\sigma,r-2\sigma}(z) \cap B(w,R)) + C_1 \left ( \frac{6C_0 \sigma}{R}\right )^{\eta_1} \mu(B(w,R))\\
& \leq A ( \mu(C_{r,r-2\sigma}(z) \cap B(w,R) ) - \mu(C_{r,r-\sigma}(z) \cap B(w,R))  ) + C_1  \left ( \frac{6C_0 \sigma}{R}\right )^{\eta_1} \mu(B(w,R)),\\
\end{align*}
so that \eqref{LUAD1} holds for $\theta = \frac{A}{1+A} <1$ and $C=\frac{C_1 (6C_0)^{\eta_1}}{1+A} < +\infty.$

The second step in the proof now consists in iterating this inequality. Let  $d = \rho(w,B(z,r)) \geq 0$. Let us distinguish two cases. If $d \geq \left (1 -\left ( \frac{s}{R} \right )^{\frac{1}{2}} \right )R$, then $C_{r,r-s}(z) \cap B(w,R) \subset C_{R,R-(\frac{s}{R})^{1/2} R}(w)$ as one easily checks. Applying the annular decay property in the ball $B(w,R)$, we have  
\[ \mu(C_{r,r-s}(z) \cap B(w,R)) \leq  C_1 \left ( \frac{s}{R} \right )^{\frac{\eta_1}{2}} \mu(B(w,R)), \] 
so that there is no iteration to make. \\
Suppose now that $d< \left (1-\left ( \frac{s}{R} \right )^{\frac{1}{2}} \right )R$. Observe that $R \leq \rho(w,z) \leq r +d$, so that $R - d \leq r$. Thus, if $n$ is an integer such that $2^n s < \frac{R-d}{3C_0 }$, we also have $2^n s < \frac{r}{3C_0} < r$, so that we can apply \eqref{LUAD1} with $\sigma = 2^{k-1} s$ for $k=1,...,n$, and iterate to obtain
\[ \mu(C_{r,r-s}(z) \cap B(w,R)) \leq \theta^n \mu(C_{r,r-2^n s}(z) \cap B(w,R)) + \left ( \sum_{k=0}^{n-1}{\theta^k 2^{k \eta_1}} \right ) C \left ( \frac{s}{R}\right )^{\eta_1} \mu(B(w,R)). \]
Consequently, let $n_0 =    [ \log_2(\frac{ R-d}{3C_0 s})] - 1$, where $[x]$ denotes the integer part of $x$, note that $R-d > (\frac{s}{R})^{1/2} R$, and write
\begin{align*}
\mu(C_{r,r-s}(z) \cap B(w,R)) & \leq  \theta^{n_0} \mu(C_{r,r-2^{n_0} s}(z) \cap B(w,R)) + \left ( \sum_{k=0}^{n_0 -1}{\theta^k 2^{k \eta_1}}\right ) C \left ( \frac{s}{R}\right )^{\eta_1} \mu(B(w,R))\\
& \leq  \frac{1}{\theta^2} \left ( \frac{3C_0 s}{(\frac{s}{R})^{1/2}R} \right )^{-\log_2{\theta}} \mu(B(w,R)) +  \frac{C}{1-\theta 2^{\eta_1}} \left ( \frac{s}{R}\right )^{\eta_1} \mu(B(w,R))\\
& \lesssim  \left ( \frac{s}{R}\right )^{\frac{\eta_1}{2}}  \mu(B(w,R)),\\
\end{align*}
provided $\eta_1 < -\log_2{\theta}$, which we may assume by a previous remark, and because we have supposed $6C_0 s < R$.
\end{proof}

\begin{remark} \begin{enumerate}
\item We have proven the desired result with $\eta = \eta_1/2$ and $\eta_1 < -\log_2{\theta}$, but remark that we can clearly get $\eta= \eta_1 - \varepsilon$ for $\varepsilon>0$ as small as desired (provided $\eta_1< -\log_2{\theta}$). 
\item We have obtained a rather satisfying geometric condition ensuring that the Hardy property \ref{HardyBalls} is \mbox{satisfied:} this monotone geodesic property is obviously satisfied by complete riemannian manifolds, or by length spaces for example. We have also shown the stronger relative annular decay property in such cases.

\item Observe that conversely, the Hardy property does not imply the monotone geodesic property. Let us give two basic examples to illustrate this. First consider the space formed by the real line from which an arbitrary interval has been withdrawn, fitted with the usual euclidean distance and Lebesgue measure. This space obviously does not have the monotone geodesic property, as, to put it roughly, there is a hole in it. On the other hand, this space clearly has the Hardy property, as a consequence of the Hardy property being satisfied on the real line. The second example is a connected one: consider the space made of the three edges of an arbitrary triangle in the plane, again fitted with the euclidean distance and Lebesgue measure. This space has the Hardy property, once again as a straightforward consequence of the fact that the unit circle has it and easy change of variables. It easily follows from the fact that one of the angles must be less than $\pi/2$ that it does not have the monotone geodesic property: one of the pairs $(x,y)$ with $x$ a vertex and $y$ its orthogonal projection on the opposite side cannot meet condition \eqref{monotone}. In passing, it proves that this property is not stable by bi-Lipschitz mappings (see also \cite{Tess}).
\item The study of the first example we gave above motivates the following observation: to prove the Hardy inequality on a space of homogeneous type $X$, it is enough to prove it on a larger space $X_0$ of which $X$ is a subset, provided that $X_0$ hasn't got too much weight compared to $X$. Let us precise that. Let $(X_0,\rho,\mu)$ be a space of homogeneous type, and let $X \subset X_0$, fitted with the distance $\rho' = \rho |_X$ and the measure $\nu = \mu |_X$. Suppose furthermore that the following measure compatibility condition is satisfied: for all $x \in X$ and $r>0$, $\mu(B_{X_0}(x,r)) \lesssim \mu(B_{X_0}(x,r) \cap X) = \nu(B_X(x,r))$. Then $X$ inherits the Hardy property \ref{HardyBalls} from $X_0$. Indeed, under that condition, the kernels taken respectively on $X$ and $X_0$, which we denote $K_X$ and $K_{X_0}$, have comparable size: if $x,y \in X$,
\[ K_X (x,y) \approx \sup_{B_X \ni x,y}{\frac{1}{\nu(B_X)}} = \sup_{B_{X_0} \ni x,y \atop B \, \mathrm{centered} \, \mathrm{in} \, X}{\frac{1}{\mu(B_{X_0} \cap X)}} \lesssim \sup_{B_{X_0} \ni x,y}{\frac{1}{\mu(B_{X_0})}} \approx K_{X_0}(x,y). \]
It is clear then that the Hardy inequality is in that sense stable by restriction, provided one does not withdraw too much from the initial space. To check that Property \ref{HardyBalls} is satisfied by a space of homogeneous type $X$, it is thus enough to see if $X$ can be seen as a subset of a bigger space, satisfying the measure compatibility condition, on which the Hardy inequality is known to be true, or on which one can prove the monotone geodesic property for example.
\end{enumerate}
\end{remark}
\bigskip

 \clearpage

\appendix

\section{Wavelet representation of the adapted martingale difference operators}

\bigskip

We prove in this section Proposition \ref{representation}. 

%

\begin{proof}[Proof of Proposition \ref{representation}]
It follows essentially from results on pseudo-accretive sesquilinear forms developed in \cite{AT}. It is here simpler as everything is finite dimensional. Still, the uniform control of constants must be achieved. Let $V_Q$ be the space of complex-valued functions which are constant on each dyadic child of $Q$ (recall we assume there are at least two children), seen as a subspace of $L^2(X)$ and equipped with the complex $L^2$ inner product. On $V_Q$ consider the bilinear form 
\[ B(f,g) = \int_Q {f \, b \, g \, d\mu} = \sum_{Q' \in \widetilde{Q}}{ [f]_{Q'} [b]_{Q'} [g]_{Q'} \mu(Q') }.  \]
Clearly, since for $f \in V_Q$, $\|f\|_2^2 = \sum{[|f|^2]_{Q'} \mu(Q')}$ and $[|f|^2]_{Q'} = [f]_{Q'} \overline{[f]_{Q'}},$ we have
\[ |B(f,g)| \leq \sup_{Q' \in \widetilde{Q}} {|[b]_{Q'}| \ \ \|f\|_2 \ \ \|g\|_2}, \]
and 
\[ \inf_{\|g\|_2 = 1}{|B(f,g)|} \geq \inf_{Q' \in \widetilde{Q}} {|[b]_{Q'}| \ \ \|f\|_2} \]
by taking $g \in V_Q$ with 
\[ [g]_{Q'} = \frac{\overline{[f]_{Q'}}}{\|f\|_2}  \frac{\overline{[b]_{Q'}}}{|[b]_{Q'}|}. \]
Consider $W_Q \subset V_Q$ equal to the orthogonal complement of $\mathbb{C} 1_Q$ and $P_{W_Q}$ the orthogonal projection of $V_Q$ onto $W_Q$. Let also $X_Q = \{ g \in V_Q ; B(1_Q,g) = 0  \}.$ Then $\mathbb{C}1_Q \oplus X_Q = V_Q$ is a topological sum. Indeed, let $c_Q \in \mathbb{C}$ such that $c_Q^2 \frac{B(1_Q,1_Q)}{\mu(Q)} = 1$ which is possible because $|B(1_Q,1_Q)| = |[b]_Q \mu(Q)| \neq 0$. Then the splitting is given by $f = \frac{c_Q^2}{\mu(Q)} B(f,1_Q) 1_Q + g$, and 
\[ \left \| \frac{c_Q^2}{\mu(Q)} B(f,1_Q) 1_Q \right  \|_2  \leq |c_Q^2| \sup_{Q' \in \widetilde{Q}} {|[b]_{Q'}| \|f\|_2 \|g\|_2} \leq (1+|c_Q^2| \sup_{Q' \in \widetilde{Q}}{|[b]_{Q'}|}) \|f\|_2. \]
Let $P_{X_Q}$ be the projector on $X_Q$ associated to this splitting. Then it can be checked that $P_{X_Q}: W_Q \rightarrow X_Q$ is an isomorphism with inverse $P_{W_Q}: X_Q \rightarrow W_Q.$ Set $\phi_Q^0 = \frac{1_Q}{\mu(Q)^{1/2}}$ and complete it to an orthonormal basis of $W_Q$, $\phi_Q^1, ... , \phi_Q^{N_Q -1}$ for $\|.\|_2$. Define $\widetilde{\phi}_Q^{b,s}$ by $\widetilde{\phi}_Q^{b,0} = c_Q \phi_Q^0,$ $\widetilde{\phi}_Q^{b,s} = P_{X_Q}(\phi_Q^s)$ for $s\geq 1$. For $f\in X_Q \backslash \{0\}$, let $g \in V_Q$, $\|g\|_2 = 1$, such that $B(f,g) \geq \inf_{Q' \in \widetilde{Q}}{|[b]_{Q'}|} \|f\|_2$ as before. Let $h= \frac{P_{X_Q}g}{\|P_{X_Q}g\|_2},$ as $P_{X_Q}g \neq 0$ because otherwise $g \in \mathbb{C} 1_Q$ and $B(f,g)=0$. Thus $h \in X_Q$, $\|h\|_2 = 1$, and 
\[ B(f,h) = \frac{B(f,g)}{\|P_{X_Q}g\|_2} \geq \frac{\inf_{Q' \in \widetilde{Q}}{|[b]_{Q'}|}}{\|P_{X_Q}g\|_2} \|f\|_2.  \]
By the Riesz representation theorem, there exists $A \in \mathcal{L}(X_Q)$, $\|A\| \leq \sup_{Q' \in \widetilde{Q}}{|[b]_{Q'}|},$ such that for all $f,g \in X_Q$
\[ B(f,g) = \langle Af, \overline{g} \rangle = \int_Q{Af \, g \, d\mu} \]
and the above argument says that $A^{-1} \in \mathcal{L}(X_Q),$ and 
\[ \|A^{-1}\| \leq  \frac{\|P_{X_Q}g\|_2}{\inf_{Q' \in \widetilde{Q}}{|[b]_{Q'}|}}. \]
Set then $\phi_Q^{b,s} = {}^t A^{-1} \widetilde{\phi}_Q^{b,s} $ for $1\leq s \leq N_Q -1$ where ${}^t A $ is the real transpose of $A$. It is quite clear that $\phi_Q^{b,s}$ and $ \widetilde{\phi}_Q^{b,s}$ satisfy $(1),(2),(3),(4).$ It remains to prove $(5)$ and $(6)$. Remark that for $f \in X_Q$, we have obtained 
\[ f = \sum_{s=1}^{N_Q - 1} {B(f,\phi_Q^{b,s})  \widetilde{\phi}_Q^{b,s}} \]
and $\|f\|_2 \approx \sum{|B(f,\phi_Q^{b,s})|^2}$ (use $f = P_{X_Q}(\sum{B(f,\phi_Q^{b,s})\phi_Q^s })$ and the bounds for the operators $P_{X_Q}$ and $P_{W_Q}$). Now, let $f \in L^1(X)$ and observe that $\Delta_Q^b f \in V_Q$ and $\int_Q{b \Delta_Q^b f d\mu} = 0.$ Hence, $\Delta_Q^b f \in X_Q.$ It remains to show that $B(\Delta_Q^b f, \phi_Q^{b,s}) = \int_Q{f \phi_Q^{b,s} d\mu}$ for $s\geq 1$. Indeed, $B(E_Q^b f, \phi_Q^{b,s}) = 0$, so that
\[ B(\Delta_Q^b f, \phi_Q^{b,s}) = \sum_{Q' \in \widetilde{Q}}{B \left (\frac{[f]_{Q'}}{[b]_{Q'}} 1_{Q'}, \phi_Q^{b,s} \right )} = \sum_{Q' \in \widetilde{Q}}{[f]_{Q'} [\phi_Q^{b,s}]_{Q'} \mu(Q')  } = \int_Q{f \phi_Q^{b,s} d\mu}, \]
where we have used the fact that $\phi_Q^{b,s}$ is constant on each child $Q'$ of $Q$.
\end{proof}

\bigskip

\section{Coefficient estimates}

\medskip

Let $T$ be a singular integral operator. Let $Q$ and $R$ be two cubes of the space of homogeneous type $X$. Remember that we introduced
\[ \mu(Q,R) = \inf_{x \in Q, y \in R}\left \{ \mu(B(x,\rho(x,y))), \mu(B(y,\rho(x,y))) \right \}, \]
and
\[ \alpha_{Q,R} = \begin{cases}
 \mu(Q)^{\frac{1}{2}} \mu(R)^{\frac{1}{2}} \left ( {\frac{\inf(l(Q),l(R))}{\rho(Q,R)}} \right )^{\alpha} \frac{1}{\mu(Q,R)}  \quad & \mathrm{if} \ \ \rho(Q,R) \geq \sup(l(Q), l(R)) \\
1 \quad & \mathrm{if} \ \ l(Q) = l(R), \ \ \mathrm{and} \ \ \rho(Q,R) < l(Q).
 \end{cases}\\ \] 
We state the following lemma:

\begin{lem}\label{estimates}
Let us assume the hypotheses of Theorem \ref{AR}, and let $Q,R$ be two pseudo-accretive dyadic cubes in $X$, either strongly pseudo-accretive or degenerate pseudo-accretive with respect to $b^1,b^2$ respectively, and depending on the nature of the inequality below. Let also $P^1_i$ be one of the stopping cubes with respect to $b^1$. Then we have the following coefficient estimates: whenever $l(Q)=l(R)$,

\begin{equation}
\left |  \langle  b^2  \phi_Q^{b^2} , T(b^1  \phi_R^{b^1} )    \rangle \right | \lesssim \alpha_{Q,R}, 
\label{eqn:1}
\end{equation}
\begin{equation}
\left | \langle  b^2  \phi_Q^{b^2} , T(\xi^1_R g )    \rangle \right | \lesssim \alpha_{Q,R} \, \mu(R)^{\frac{1}{2}}  ,  \label{eqn:2}
\end{equation}
\begin{equation}
\left | \langle  \xi^2_Q f , T(\xi^1_R g )    \rangle \right | \lesssim \alpha_{Q,R} \, \mu(Q)^{\frac{1}{2}} \, \mu(R)^{\frac{1}{2}}  , \label{eqn:3}
\end{equation}
\begin{equation}
\left | \langle \xi^2_Q f, T( b^1 1_R)\rangle \right | \lesssim   \alpha_{Q,R} \, \mu(Q)^{\frac{1}{2}} \, \mu(R)^{\frac{1}{2}}, \label{eqn:4}
\end{equation}
\begin{equation}
\left | \langle b^2 \phi_Q^{b^2} , T(b^1 1_R)\rangle \right | \lesssim \alpha_{Q,R} \, \mu(R)^{\frac{1}{2}} , \label{eqn:5}
\end{equation}
and when $\rho(Q,P^1_i) \geq l(P^1_i) \geq l(Q),$ 

\begin{equation}
\left | \langle \xi^2_Q f, T(b^1_{P^1_i})\rangle \right | \lesssim \alpha_{Q,P^1_i} \, \mu(Q)^{\frac{1}{2}} \, \mu(P^1_i)^{\frac{1}{2}},\label{eqn:6}
\end{equation}
\begin{equation}
\left |\langle b^2 \phi_Q^{b^2} , T(b^1_{P^1_i})\rangle \right | \lesssim  \alpha_{Q,P^1_i} \, \mu(P^1_i)^{\frac{1}{2}} , \label{eqn:7}
\end{equation}
\begin{equation}
\left |\langle b^2 \phi_Q^{b^2} , T(b^1 1_{P^1_i})\rangle \right | \lesssim  \alpha_{Q,P^1_i} \, \mu(P^1_i)^{\frac{1}{2}} , \label{eqn:8}
\end{equation}
\end{lem}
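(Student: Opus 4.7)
The plan is to treat all eight estimates \eqref{eqn:1}--\eqref{eqn:8} via a common dichotomy dictated by the two branches of the definition of $\alpha_{Q,R}$: the \emph{far} regime $\rho(Q,R) \geq \sup(l(Q), l(R))$ and the \emph{close} regime $l(Q) = l(R)$ with $\rho(Q,R) < l(Q)$. Inequalities \eqref{eqn:6}--\eqref{eqn:8} live only in the far regime by hypothesis. The basic uniform $L^1$ bounds that power every case are $\|b^i \phi_Q^{b^i}\|_1 \lesssim \mu(Q)^{1/2}$ (from $\|\phi_Q^{b^i}\|_\infty \lesssim \mu(Q)^{-1/2}$ given by Proposition~\ref{representation}, together with \eqref{accretive2} and H\"older), $\|\xi^i_Q f\|_1 \lesssim \mu(Q)$ (from the coefficient bound in Lemma~\ref{Hofmann} combined with \eqref{accretive2} for $b^i$ and each $b^i_{P_n^i}$), and $\|b^i 1_R\|_1 \lesssim \mu(R)$. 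Moreover, every left-hand factor in our pairings has mean zero.

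In the far regime, the dyadic standard estimate \eqref{standard estimate dyadic} applied to the mean-zero left factor reduces each inner product to
\[
\|\mathrm{left}\|_1 \int_{\mathrm{right\ cube}} \left(\frac{l(Q)}{\rho(z_Q,y)}\right)^\alpha \lambda(z_Q,y)^{-1} |\mathrm{right}(y)|\, d\mu(y).
\]
Since $\rho(Q,R) \geq \sup(l(Q), l(R))$, for $y$ in the right-hand cube we have $\rho(z_Q, y) \approx \rho(Q,R)$ and $\lambda(z_Q, y) \approx \mu(Q,R)$, so the integral factors as $(l(Q)/\rho(Q,R))^\alpha \mu(Q,R)^{-1} \|\mathrm{right}\|_1$. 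Substituting the uniform $L^1$ bounds above yields exactly $\alpha_{Q,R}$ times the announced combination of $\mu(Q)^{1/2}$, $\mu(R)^{1/2}$ (or $\mu(P_i^1)^{1/2}$). In the symmetric cases \eqref{eqn:3}--\eqref{eqn:4}, either side may serve as the mean-zero factor.

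In the close regime (relevant only for \eqref{eqn:1}--\eqref{eqn:5}, where $\alpha_{Q,R} = 1$), expand every wavelet or buffer function on the children of its host cube using $\|\phi_Q^{b^i}\|_\infty \lesssim \mu(Q)^{-1/2}$ and the decomposition in Lemma~\ref{Hofmann}. For \eqref{eqn:1}, this gives $\langle b^2 \phi_Q^{b^2}, T(b^1 \phi_R^{b^1})\rangle = \sum_{Q' \in \widetilde{Q},\, R' \in \widetilde{R}} c_{Q'} c_{R'} \langle b^2 1_{Q'}, T(b^1 1_{R'})\rangle$ with $|c_{Q'}|, |c_{R'}| \lesssim \mu(Q)^{-1/2}$. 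Since $Q$ and $R$ are strongly pseudo-accretive, \eqref{Hofmann3} supplies the uniform mean bounds $[|b^1|^p]_{R'} \lesssim 1$ and $[|b^2|^q]_{Q'} \lesssim 1$ required by the weak boundedness hypotheses. When $Q' = R'$ we invoke \eqref{WBP2} (with reference cube $Q_0$ on both sides), and when $Q' \neq R'$ are disjoint same-generation neighbors we invoke \eqref{WBP1} in its one-term form (single $R_n := Q'$). Each term is thus bounded by $\mu(Q') \approx \mu(Q) \approx \mu(R)$, and the prefactors close out $\alpha_{Q,R} = 1$. The buffer-function variants \eqref{eqn:2}--\eqref{eqn:5} follow by the same expansion, with the additional $b^i_{P^i_n}$ pieces appearing in $\xi^i$ controlled either by \eqref{accretive3} and \eqref{Hardy1} on the neighboring host cube, or directly by the WBP hypotheses since each $b^i_{P^i_n}$ inherits the requisite accretivity averages on its support.

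The main obstacle will be the combinatorial case analysis in the close regime: verifying that every child pair and every stopping-cube component produced by the buffer-function decomposition fits into the scope of \eqref{WBP1} or \eqref{WBP2}, and that those pieces supported outside a neighborhood of the opposing cube admit the standard estimate without loss. None of these checks is conceptually deep, but together they constitute the bulk of the bookkeeping.
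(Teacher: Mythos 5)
Your far-regime argument is exactly the paper's: write the mean-zero left factor against the right factor, apply \eqref{standard estimate dyadic}, and plug in the uniform $L^1$ bounds; that part is correct and complete. The trouble is in the close regime, and the gap is concrete.

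When you expand $\langle b^2 \phi_Q^{b^2}, T(b^1 \phi_R^{b^1})\rangle$ over children $Q' \in \widetilde{Q}$, $R' \in \widetilde{R}$, you list exactly two subcases: $Q'=R'$ (handled by \eqref{WBP2}) and $Q'\neq R'$ disjoint neighbors (handled by the one-term form of \eqref{WBP1}, i.e.\ \eqref{WBPC}). But the children of two neighboring cubes can also be \emph{separated} by at least their side length, $\rho(Q',R') \geq l(R')$, and for those pairs neither WBP applies nor can you simply "admit the standard estimate without loss": once you have expanded into children, the individual pieces $b^2 1_{Q'}$ and $b^1 1_{R'}$ are \emph{not} mean-zero, so \eqref{standard estimate dyadic} does not apply to the pair as is. The missing step is to restore a mean-zero factor by subtracting a constant, e.g.
\[
\langle b^2 1_{Q'}, T(b^1 1_{R'})\rangle = \langle b^2 1_{Q'}, T\bigl((b^1 - [b^1]_{R'})1_{R'}\bigr)\rangle + [b^1]_{R'}\,\langle b^2 1_{Q'}, T(1_{R'})\rangle,
\]
apply \eqref{standard estimate dyadic} to the first term (now mean-zero in the argument of $T$), and crucially apply the Hardy inequality \eqref{Hardy1} to the second term, which is harmless precisely because $1_{R'}\in L^\infty$ so the dual exponent $q'$ can be used against $b^2 \in L^q$. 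This two-step decomposition is what makes the non-adjacent child pairs controllable by $\mu(R')$, and it is not a cosmetic bookkeeping item: it is the one place in the close regime where the Hardy inequality on dyadic cubes is genuinely needed, in line with the theme of the paper. Your sketch as written would stall at this subcase; the rest of the estimates \eqref{eqn:2}--\eqref{eqn:5} inherit the same issue for their non-adjacent child/stopping-cube components, and the same fix resolves them.
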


\smallskip

\begin{proof}
We detail the proof of the first inequality. Suppose first that we have $\rho(Q,R) \geq l(Q)$. Since $ b^2 \phi_Q^{b^2} $ and $b^1 \phi_R^{b^1}$ have mean $0$, we can apply \eqref{standard estimate dyadic} to get the bound
\begin{align*}
 \left | \langle  b^2 \phi_Q^{b^2} , T(b^1 \phi_R^{b^1} )    \rangle \right | &  \lesssim \int_Q{|b^2 \phi_Q^{b^2}  |d\mu} \int_R{| b^1 \phi_R^{b^1}  |d\mu}   \left ( {\frac{\inf(l(Q),l(R))}{\rho(Q,R)}} \right )^{\alpha} \frac{1}{\mu(Q,R)}\\
 & \lesssim   \alpha_{Q,R} , \\
 \end{align*}
the last inequality being a consequence of the $L^{\infty}$ estimates of the functions $\phi_Q^{b^2}, \phi_R^{b^1}$. Indeed, Lemma \ref{representation} entails that $\|\phi_Q^{b^2}\|_{\infty} \lesssim \mu(Q)^{-1/2}, \|\phi_R^{b^1}\|_{\infty} \lesssim \mu(R)^{-1/2}$. Thus $\| b^2 \phi_Q^{b^2} \|_{L^1(Q)} \lesssim \mu(Q)^{-1/2} \|b^2\|_{L^1(Q)} \lesssim \mu(Q)^{1/2}$ since $Q$ is pa $2$, and similarly for $b^1 \phi_R^{b^1}$.\\
Now, assume that $\rho(Q,R) < l(Q) $. Write
\[ \langle  b^2 \phi_Q^{b^2} , T(b^1 \phi_R^{b^1} )    \rangle  = \sum_{Q' \in \widetilde{Q},R' \in \widetilde{R}}{[ \phi_Q^{b^2}   ]_{Q'} \langle b^2 1_{Q'}  , T(b^1 1_{R'})  \rangle [\phi_R^{b^1}  ]_{R'}  }. \]
If $Q'$ and $R'$ are neighbors, by the weak boundedness property \eqref{WBPC} (we do not need the stronger property \eqref{WBP1} here), we have
\[ |  \langle b^2 1_{Q'}  , T(b^1 1_{R'})  \rangle |  \lesssim \mu(R'). \]
If $\rho(Q',R') \geq l(R')$, then write
\begin{align*}
|\langle b^2 1_{Q'}, T(b^1 1_{R'}) \rangle | & \leq | \langle b^2 1_{Q'} , T(b^1 1_{R'} - [b^1]_{R'} 1_{R'}) \rangle| + |[b^1]_{R'}  | |\langle b^2 1_{Q'} , T(1_{R'}) \rangle | \\
& \lesssim \mu(Q') \mu(R') \left ( \frac{l(R')}{\rho(Q',R')} \right )^{\alpha} \frac{1}{\mu(Q',R')}  + \mu(R')\\
& \lesssim \mu(R').
\end{align*}
To get the second inequality, we have applied \eqref{standard estimate dyadic} to the first term (which is possible because $\int{(b^1 1_{R'} - [b^1]_{R'} 1_{R'})d\mu} = 0$), and the Hardy inequality \eqref{Hardy1} to the second term. Then the last inequality comes from the fact that $\rho(Q',R') \geq l(R')$, and $\mu(Q',R') \approx \mu(\widehat{Q}) \approx \mu(Q')$ as both $Q'$ and $R'$ are children of the neighbor cubes $Q,R$. Finally, using again the $L^{\infty}$ estimates of the functions $\phi$, and the fact that $Q',R'$ have comparable measure, we have
\[ \left |  \langle  b^2 \phi_Q^{b^2} , T(b^1 \phi_R^{b^1} )    \rangle \right |  \lesssim  \sum_{Q' \in \widetilde{Q}, R' \in \widetilde{R}}{\mu(Q')^{-\frac{1}{2}} \mu(R') \mu(R')^{-\frac{1}{2}} } \lesssim 1, \]
which was the desired estimate.\\
For the other estimates we asserted, it is exactly the same idea. Indeed, observe that we always have at least one of the functions in the scalar product which has mean zero, allowing us to conduct exactly the same argument as above when the cubes $Q$ and $R$ (or $P^1_i$) are far away. Remark that the only thing that changes is the normalization of those functions, but it is immediate to see that
\[  \int_R{| \xi^1_R g | d\mu }  \lesssim \mu(R), \quad \int_Q{| \xi^2_Q f | d\mu} \lesssim \mu(Q), \quad \int_{R}{|b^1 1_R|} \lesssim \mu(R), \quad  \int_{P^1_i}{|b^1_{P^1_i}|} \lesssim \mu(P^1_i),  \]
which yields all the desired estimates when the cubes are far away. Now when $Q$ and $R$ are neighbors, it is also the same idea as above, and one only has to apply the weak boundedness property \eqref{WBPC} to get the desired estimates. We therefore omit the detail here.
\end{proof}

\bigskip
\bigskip

\bibliographystyle{short}	
\bibliography{myrefs}

\end{document}